\newtheorem{Theorem}{Theorem}[section]
\newtheorem{Definition}[Theorem]{Definition}
\newtheorem{dfn&prop}[Theorem]{Definition and Proposition}
\newtheorem{Proposition}[Theorem]{Proposition}
\newtheorem{Corollary}[Theorem]{Corollary}
\theoremstyle{remark}
\newtheorem*{Remark}{Remark}
\newtheorem*{Claim}{Claim}
\newtheorem*{proof of claim}{Proof of claim}
\theoremstyle{definition}
\newtheorem*{notations}{Notations}
\numberwithin{equation}{section}
\def\F{{\mathcal{F}}}
\def\J{{\mathcal{J}}}
\def\M{{\mathcal{M}}}
\def\Stp{{\mathscr{S}}}
\def\Exad{{\mathscr{S}^{\mathbb{N}}}}
\def\C{{\mathbb{C}}}
\def\D{{\mathbb{D}}}
\def\N{{\mathbb{N}}}
\def\Z{{\mathbb{Z}}}
\def\R{{\mathbb{R}}}
\def\H{{\mathbb{H}}}
\def\Or{{\mathcal{O}}}
\def\Ort{{\widetilde{\mathcal{O}}}}
\newcommand{\dist}{\operatorname{dist}}
\newcommand{\e}{\operatorname{e}}
\newcommand{\degr}{\operatorname{deg}}
\newcommand{\lcm}{\operatorname{lcm}}
\newcommand{\Oo}{\operatorname{O}}
\newcommand{\id}{\operatorname{id}}
\newcommand{\len}{\operatorname{\alpha}}
\newcommand{\Ima}{\operatorname{Im}}
\newcommand{\Rea}{\operatorname{Re}}
\newcommand{\Crit}{\operatorname{Crit}}
\newcommand{\Stab}{\operatorname{Stab}}
\newcommand{\itin}{\operatorname{itin}}
\newcommand{\ul}{\underline}
\newcommand{\cl}{\overline}
\title[Semiconjugacies and pinched Cantor bouquets]{Semiconjugacies, pinched Cantor bouquets and hyperbolic orbifolds}
\author{Helena Mihaljevi\'{c}-Brandt}
\begin{document}

\maketitle

\begin{abstract}
Let $f:\C\rightarrow\C$ be a transcendental entire map that is \emph{subhyperbolic}, 
i.e.,\ the intersection of the Fatou set $\F(f)$ and the postsingular set $P(f)$ 
is compact and the intersection of the Julia set $\J(f)$ and $P(f)$ is finite. 
Assume that no asymptotic value of $f$ belongs to $\J(f)$ 
and that the local degree of $f$ at all points in $\J(f)$ is bounded 
by some finite constant. We prove that there is a hyperbolic map 
$g\in\{z\mapsto f(\lambda z):\; \lambda\in\C\}$ with 
connected Fatou set such that $f$ and $g$ are semiconjugate on their Julia sets. 
Furthermore, we show that this semiconjugacy is a conjugacy
when restricted to the escaping set $I(g)$ of $g$. 
In the case where $f$ can be written as a finite composition of maps of finite order, 
our theorem, together with recent results on Julia sets of hyperbolic maps,
implies that $\J(f)$ is a \emph{pinched Cantor bouquet}, consisting of dynamic rays 
and their endpoints. Our result also seems to give the first complete description 
of topological dynamics of an entire transcendental map whose Julia set is the whole 
complex plane.
\end{abstract}
\section{Introduction}
It is well-known that the Julia set $\J(f)$ of a 
 transcendental entire function $f$ can be the whole complex plane.
 (Basic definitions and notations 
  are reviewed in Section \ref{sec_prel}.)
 As far as we know, there is no function with this property for which
 the topological dynamics has been completely understood. The results in
 this paper provide such a description for a wide class of examples, 
 including maps such as $z\mapsto \pi \sinh z$. (A 
 description of the \emph{combinatorial} dynamics of the latter map was
 previously given by Schleicher in \cite{schleicher}.)

In particular, we give an answer to the question of Bergweiler (personal  
 communication) whether the escaping set
 \begin{align*}
  I(f):=\{ z\in\C: f^n(z)\to\infty\text{ as }n\to\infty\}
 \end{align*}
 of a cosine map $F_{a,b}(z):=a\e^z +b\e^{-z}$ 
with strictly preperiodic critical values is connected: this is not the case;
 see Corollary \ref{cor_bergweiler} below.
 (On the other hand, there are entire 
  maps whose Julia set equals $\C$ but 
  for which the escaping set is connected; see
  \cite{rippon_stallard,rempe_6}.) 

 In fact, our results are considerably more general. 
  A transcendental entire map $f$ is called \emph{subhyperbolic} if 
  the intersection of the Fatou set $\F(f)$ and the postsingular set $P(f)$ 
  is compact and the intersection of the Julia set $\J(f)$ and $P(f)$ is finite.
  A subhyperbolic map is called \emph{hyperbolic} 
  if $\J(f)\cap P(f)=\emptyset$. 
  We are interested in the following class of subhyperbolic functions which
  includes all hyperbolic maps.

\begin{Definition}[strongly subhyperbolic maps]
 A subhyperbolic transcendental entire map $f$ is called 
  \emph{strongly subhyperbolic} if  
  $\J(f)$ contains no asymptotic values of $f$  
  and the local degree of $f$ at the points in $\J(f)$ is bounded by some 
  finite constant. 
\end{Definition} 

 Note that the map $z\mapsto \pi\sinh z$ mentioned previously is 
  strongly subhyperbolic, since $P(f)=P(f)\cap \J(f)=\{\pm \pi i, 0\}$ is
  finite, all critical points are simple and there are no asymptotic values. 
  Our main theorem describes the Julia set of any strongly subhyperbolic
  entire function as a quotient of the Julia set of 
  a (particularly simple) hyperbolic function in the same parameter space. 

\begin{Theorem}
\label{thm_maintheorem}
Let $f$ be strongly subhyperbolic, and let
 $\lambda\in\C$ be such that $g(z):=f(\lambda z)$ is hyperbolic with connected
 Fatou set. Then there exists 
 a continuous surjection $\phi:\J(g)\rightarrow\J(f)$, such that 
\begin{eqnarray*}
f(\phi (z)) = \phi( g(z))
\end{eqnarray*}
for all $z\in\J(g)$. 
Moreover, $\phi$ restricts to a homeomorphism between the escaping sets
 $I(g)$ and $I(f)$.
\end{Theorem}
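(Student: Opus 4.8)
The plan is to construct $\phi$ via a limiting (Carathéodory-style / thermodynamic) argument comparing the dynamics of $f$ and $g$ along their common itineraries, using the fact that $g$ is hyperbolic and therefore expanding in a suitable orbifold metric. Since $g(z)=f(\lambda z)$, the maps $f$ and $g$ differ only by the linear rescaling $L(z)=\lambda z$, so $g^{n}$ and $f^{n}$ are conjugate off the scale, but the key point is that they share the same singular set up to rescaling; the postsingular set of $g$ is compactly contained in the Fatou set (hyperbolicity), while that of $f$ may touch $\J(f)$ at finitely many points. I would first set up symbolic dynamics: partition a neighbourhood of $\J(g)$ (and correspondingly of $\J(f)$) using preimages of a suitable fundamental domain structure adapted to the tracts/exponential ends, so that every point of $\J(g)$ has a well-defined \emph{external address} or itinerary, and similarly for $\J(f)$ away from the finitely many singular orbits. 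Then $\phi$ should send a point of $\J(g)$ with a given itinerary to the unique point of $\J(f)$ with the same itinerary.

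The core analytic input is a uniform expansion estimate: because $g$ is hyperbolic and strongly subhyperbolic hypotheses bound the local degree and exclude asymptotic values in the Julia set, one obtains an orbifold metric in which $g$ (and, with an orbifold correction at the finitely many singular points, $f$) is uniformly expanding on a neighbourhood of its Julia set. This is presumably the content of a uniform expansion theorem stated elsewhere in the paper (the \texttt{pot} environment hints at a \texttt{thm\_uniform\_expansion}). Given such expansion, the pullback of a point along a fixed itinerary — taking branches of $f^{-1}$ and $g^{-1}$ that agree combinatorially — is an exponentially contracting sequence of nested sets, so the intersection is a single point in each case, and this defines $\phi$ pointwise. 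Continuity of $\phi$ follows because itineraries of nearby points agree for a long initial segment and the diameters of itinerary-pieces shrink uniformly; surjectivity follows because every itinerary realized by $\J(f)$ is also realized by $\J(g)$ (the combinatorial model is the same, governed by the tract structure which is identical up to the linear rescaling), and the semiconjugacy relation $f\circ\phi=\phi\circ g$ is immediate from the shift-invariance of the itinerary assignment.

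For the escaping set, the plan is to upgrade $\phi|_{I(g)}$ to a homeomorphism onto $I(f)$. Escaping points have itineraries that are eventually dominated by the behaviour in the tracts (where $f$ and $g$ are, up to rescaling, genuine coverings onto a large disc), so for such points there is no interference from the singular set at all: the two inverse-branch processes are literally conjugate by the linear map $L$ on the relevant scales, which makes $\phi$ injective on $I(g)$ and shows $\phi(I(g))\subseteq I(f)$. Conversely every escaping point of $f$ lies in some tract far out and its itinerary is realized by an escaping point of $g$, giving surjectivity onto $I(f)$; openness/continuity of the inverse comes again from the uniform expansion and the fact that on $I(g)$ the inverse branches are taken entirely in regions where the metric distortion is controlled. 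One should also check $\phi$ maps $I(g)$ \emph{into} (not just toward) $I(f)$ by verifying the images actually escape, which follows from $f^{n}\circ\phi=\phi\circ g^{n}$ together with an estimate that $\phi$ does not collapse the escape rate — e.g.\ $\phi(z)$ lies in the same tract of $f$ as $z/\lambda$ lies in a tract of $f$, so its modulus grows.

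The main obstacle I expect is handling the finitely many points of $P(f)\cap\J(f)$ and the unbounded local degree of $g$ near those points: there, the naive inverse-branch/expansion argument breaks down because $f$ is not a local homeomorphism and the orbifold metric degenerates. The standard remedy is to pass to the associated hyperbolic orbifold, assign integer weights at the postsingular points according to the relevant ramification, and prove expansion in the orbifold metric; one must check that the orbifold is genuinely hyperbolic (excluded only in a handful of exceptional signatures, which the transcendental setting rules out) and that the metric comparison between $f$'s and $g$'s orbifold metrics is uniform away from the cusps. A secondary subtlety is that $\phi$ need not be injective on all of $\J(f)$ — rays may be "pinched" together at common endpoints — so one must be careful to claim injectivity only on $I(g)$, where pinching cannot occur because escaping points cannot share an itinerary without coinciding.
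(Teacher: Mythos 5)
Your proposal follows essentially the same route as the paper: both construct $\phi$ as the limit of combinatorially matched pullbacks under $f$ along the $g$-orbit, with convergence supplied by uniform expansion of $f$ in the metric of a dynamically associated hyperbolic orbifold (Theorem \ref{thm_uniform_expansion}, which rests on the estimate of Theorem \ref{thm_main2}), injectivity on $I(g)$ from the expansion of $g$ applied to points that remain a bounded hyperbolic distance apart, and surjectivity by first hitting all of $I(f)$ and then passing to closures. The one place to be careful is your itinerary-based surjectivity claim (``every itinerary realized by $\J(f)$ is realized by $\J(g)$''): itineraries are ambiguous for points of $\J(f)$ whose orbits meet $\Crit(f)$, so, as in the paper, one should deduce $\phi(\J(g))=\J(f)$ from $I(f)\subset\phi(\J(g))$, the closedness of $\phi(\J(g))$, and $\J(f)=\overline{I(f)}$.
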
 
\begin{Remark}
 The hypothesis will be automatically satisfied whenever $\lambda$ is
  sufficiently small. Also, any two maps $g$ and $g'$ as in the theorem
  are quasiconformally conjugate on a neighbourhood of their Julia sets
  \cite{rempe_5}, so it is sufficient to prove the theorem for 
  any such map. 
\end{Remark}

As in \cite{rempe_5}, 
 we say that a hyperbolic function $g$ with connected Fatou set 
 is of \emph{disjoint type}. 
 For simple families, such as $z\mapsto \lambda \sinh z$, the
 dynamics of disjoint type functions is well-understood. Hence Theorem
 \ref{thm_maintheorem} extends this understanding to all strongly 
 subhyperbolic functions in these families. 
(In Appendix A, we present a detailed description of the
 topological dynamics of $z\mapsto\pi\sinh$.) 

More generally, suppose that $g$ is of disjoint type and
 has \emph{finite order}, i.e., 
$\log\log\vert g(z)\vert =\Oo(\log\vert z\vert)$ as 
$z\to\infty$, or, more generally, 
can be written as a finite composition of finite-order maps with bounded 
singular sets. Then it is known 
that $\J(g)$ 
is a \emph{Cantor bouquet}, i.e.,\ homeomorphic to a \emph{straight brush} in
the sense of \cite{aarts_oversteegen}. 

\begin{Corollary}
\label{cor1}
Let $f=f_1\circ\dots\circ f_n$ be a strongly subhyperbolic map, where 
every $f_i$ is an entire map with bounded set of singular values and with finite order 
of growth. 
Then $\J(f)$ is a \emph{pinched Cantor bouquet}; that is, the quotient
 of a Cantor Bouquet by a closed equivalence relation defined on its endpoints. 
\end{Corollary}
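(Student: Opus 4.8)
The plan is to feed the hyperbolic model $g$ supplied by Theorem~\ref{thm_maintheorem} into the known description of Julia sets of disjoint-type maps that are finite compositions of finite-order maps with bounded singular sets. First I would set up the model: by the Remark following Theorem~\ref{thm_maintheorem} we may pick $\lambda\in\C$ small enough that $g(z):=f(\lambda z)$ is of disjoint type. Writing $m_\lambda(z)=\lambda z$, we have $g=f_1\circ\dots\circ f_{n-1}\circ(f_n\circ m_\lambda)$, and precomposing with the conformal automorphism $m_\lambda$ alters neither the order of growth of $f_n$ (since the maximum modulus satisfies $M(r,f_n\circ m_\lambda)=M(|\lambda|r,f_n)$) nor its set of singular values ($m_\lambda$ has no critical points and is a homeomorphism, so both critical and asymptotic values are unchanged). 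Thus $g$ is again a finite composition of entire maps of finite order with bounded singular sets, and the cited theorem gives a homeomorphism $h$ from $\J(g)$ onto a straight brush $B$ in the sense of \cite{aarts_oversteegen}; i.e.\ $\J(g)$ is a Cantor bouquet.

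Next I would transport the semiconjugacy. Theorem~\ref{thm_maintheorem} provides a continuous surjection $\phi\colon\J(g)\to\J(f)$ with $f\circ\phi=\phi\circ g$ that restricts to a homeomorphism $I(g)\to I(f)$. Define $x\sim y:\Longleftrightarrow\phi(x)=\phi(y)$ on $\J(g)$, equivalently, via $h$, on $B$. The graph of $\sim$ is $(\phi\times\phi)^{-1}(\Delta)$, where $\Delta$ is the diagonal of $\J(f)\times\J(f)$; since $\J(f)$ is Hausdorff, $\Delta$ is closed, hence $\sim$ is a closed equivalence relation. The map $\phi$ factors through a continuous bijection $\bar\phi\colon \J(g)/{\sim}\to\J(f)$, and to conclude that $\J(f)$ is, topologically, the quotient I would verify that $\phi$ is a quotient map: this follows once $\phi$ is known to be proper (a property of the map built in the proof of Theorem~\ref{thm_maintheorem}), because $\J(g)$ and $\J(f)$ are closed subsets of $\C$ and hence locally compact, so a proper continuous surjection between them is closed.

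It then remains to check that $\sim$ is ``defined on the endpoints'' of $B$, i.e.\ that any equivalence class with more than one element consists of endpoints. Let $c\in\J(g)$ correspond under $h$ to a non-endpoint of $B$. Non-endpoints of the Cantor bouquet $\J(g)$ lie on the open dynamic rays of $g$, so $c\in I(g)$ and therefore $\phi(c)\in I(f)$. Since $\phi$ maps $I(g)$ homeomorphically onto $I(f)$, and moreover $\phi^{-1}(I(f))=I(g)$ — because $\phi$ conjugates the escaping dynamics and is proper — the fibre $\phi^{-1}(\phi(c))$ equals $\{c\}$. Hence no non-endpoint is glued to another point, the non-degenerate classes of $\sim$ lie in the endpoint set of $B$, and $\J(f)=\J(g)/{\sim}$ is a pinched Cantor bouquet.

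The routine part is the bookkeeping of the first paragraph; the substantive content is everything packaged above into ``$\phi$ is proper'' and ``$\phi^{-1}(I(f))=I(g)$''. Concretely, one must be sure that the semiconjugacy of Theorem~\ref{thm_maintheorem} neither collapses a \emph{non-escaping} endpoint of the model onto an escaping point of $\J(f)$ nor misbehaves near infinity; both should be read off from the explicit construction of $\phi$, which is designed to agree, far out along the rays, with a genuine conjugacy in logarithmic coordinates. That is where I would expect the real effort to lie.
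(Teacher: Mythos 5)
Your argument follows the paper's own route for this corollary: pass to the disjoint-type map $g$, invoke \cite{rrrs} to identify $\J(g)$ with a straight brush all of whose non-endpoints escape, and conclude that the semiconjugacy $\phi$ of Theorem~\ref{thm_maintheorem} can only identify endpoints because it is injective on $I(g)$ and satisfies $\phi^{-1}(I(f))=I(g)$. You in fact supply more point-set detail than the paper (properness of $\phi$ via the uniform bound on $d_f(\phi(z),z)$, closedness of the relation, and the observation that precomposing $f_n$ with $z\mapsto\lambda z$ preserves finite order and bounded singular sets), all of which is correct.
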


The escaping set of a disjoint type entire function is always disconnected,
 so we also have the following corollary, settling
 Bergweiler's question for all strongly subhyperbolic maps. 
\begin{Corollary}
\label{cor_bergweiler}
 The escaping set of a strongly subhyperbolic transcendental 
  entire function is disconnected.
\end{Corollary}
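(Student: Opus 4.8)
The plan is to read this off directly from Theorem~\ref{thm_maintheorem}. Let $f$ be strongly subhyperbolic. By the Remark following Theorem~\ref{thm_maintheorem} we may fix $\lambda\in\C$ of sufficiently small modulus so that $g(z):=f(\lambda z)$ is hyperbolic with connected Fatou set, i.e.,\ of disjoint type. Theorem~\ref{thm_maintheorem} then furnishes a continuous surjection $\phi\colon\J(g)\to\J(f)$ whose restriction to the escaping sets is a homeomorphism $\phi\colon I(g)\to I(f)$; in particular $I(f)$ is homeomorphic to $I(g)$.

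It therefore suffices to know that $I(g)$ is disconnected, and here I would simply invoke the fact --- recalled in the paragraph preceding the corollary --- that the escaping set of \emph{any} disjoint-type transcendental entire function is disconnected. For the subclass relevant to Corollary~\ref{cor1}, namely finite compositions of finite-order maps, this is also visible from the Cantor-bouquet description of $\J(g)$: the escaping set is then obtained from a Cantor bouquet by deleting the non-escaping endpoints of its hairs, and one checks that the resulting space is disconnected. In the generality of arbitrary disjoint-type maps, this disconnectedness is part of the established structure theory of such functions, so I would quote it rather than reprove it.

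Combining the two observations, $I(f)$ is a homeomorphic copy of a disconnected space, hence disconnected, which is the assertion. Applied to a cosine map $F_{a,b}(z)=a\e^z+b\e^{-z}$ with strictly preperiodic critical values --- which is strongly subhyperbolic, since its postsingular set is finite and lies in $\J(F_{a,b})$, all its critical points are simple, and it has no asymptotic values --- this settles Bergweiler's question in the negative. The proof has no genuine obstacle beyond Theorem~\ref{thm_maintheorem} itself; the only points needing attention are that the Remark does supply a disjoint-type rescaling of $f$, and that one is entitled to quote the disconnectedness of escaping sets of disjoint-type functions.
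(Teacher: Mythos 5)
Your proposal is correct and follows exactly the route the paper takes: combine the homeomorphism $\phi\colon I(g)\to I(f)$ from Theorem~\ref{thm_maintheorem} with the disconnectedness of the escaping set of a disjoint-type map, which the paper itself establishes as Proposition~\ref{thm_esc_set_dis_type} via a fundamental-domain argument. Quoting that (well-known) fact rather than reproving it is exactly what the paper licenses, so there is nothing further to add.
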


Let us comment on the assumption of strong subhyperbolicity.
 Theorem \ref{thm_maintheorem} is not true for all subhyperbolic maps: 
 it is known that $E_1:z\mapsto \frac{1}{e^2} e^z$ (disjoint type) and
 $E_2:z\mapsto  2\pi i e^z$ (subhyperbolic)
 are not topologically conjugate on their escaping sets
 \cite[Proposition 2.1]{rempe_1}. 
(In fact, for exponential
 maps the escaping set consists of curves to infinity, called
 \emph{dynamic rays} \cite{schleicher_zimmer}. For $E_1$, all these
 rays have a landing point in $\C$, while for $E_2$ there are uncountably
 many dynamic rays, each of which accumulates everywhere upon itself.) 
Rempe asked whether two 
cosine maps with strictly preperiodic critical values 
can be conjugate on their escaping sets
\cite[Question 12.1]{rempe_1}.
Theorem \ref{thm_maintheorem} together with the mentioned result in \cite{rempe_5}
on disjoint type maps gives an affirmative answer to this question.

For cosine maps $F_{a,b}(z)= a\e^z + b\e^{-z}$ that are subhyperbolic, 
Theorem \ref{thm_maintheorem} implies that every point in the Julia set 
is either on a dynamic ray or the landing point of a dynamic ray. 
This has already been shown by 
Schleicher \cite{schleicher} when both critical values of $F_{a,b}$
are assumed to be preperiodic. Nonetheless, his results do not explain 
the topological embedding of the escaping set of such a map in the complex plane.
Furthermore, 
Theorem \ref{thm_maintheorem} formulated for subhyperbolic cosine maps can be proved in a 
concise and fairly elementary way,
which is why we have included the modifications of our proof for this special case 
in Section \ref{subs_cosine}.

For hyperbolic maps, Theorem \ref{thm_maintheorem} is 
due to Rempe, and our proof 
is in the spirit of the ideas presented in \cite{rempe_5}.
However, the attempt to transfer the construction in the hyperbolic 
case to the setting of strongly subhyperbolic maps fails 
due to the existence of singular values in the Julia sets.
This obstruction is overcome by studying Julia sets as subsets of 
\emph{hyperbolic Riemann orbifolds}. These can be thought of as 
images of the unit disk under branched coverings, for which the 
set of critical values is discrete and ``tame''. 
This yields a description of a Riemann orbifold 
as a Riemann surface together with a
discrete set of \emph{ramified points}, each of which has 
finite ramification value.
The use of orbifolds in dynamics goes back to Thurston 
and has been used with great success by Douady and Hubbard in their work
on subhyperbolic rational maps.
The following result 
is not only crucial for the proof of Theorem \ref{thm_maintheorem} but also 
interesting in its own right, since it provides us with a global estimate 
of the hyperbolic metric on certain hyperbolic Riemann orbifolds.

\begin{Theorem}
\label{thm_main2}
Let $K>1$ and let $z_i$ be an infinite sequence of points satisfying 
$\vert z_j\vert <\vert z_{j+1}\vert\leq K\vert z_j\vert$.
Let $\Or$ be the orbifold with $\C$ as the underlying surface and 
whose ramified points are the points
$z_i$ with ramification value $2$. 

Then the density $\rho_{\Or}$ of the hyperbolic metric  
on $\Or$ satisfies
\begin{align*}
\rho_{\Or}(z)\geq\Oo\left(\frac{1}{\vert z\vert}\right)\quad\text{as}\quad z\rightarrow\infty.
\end{align*}
\end{Theorem}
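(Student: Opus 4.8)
The plan is to combine the scaling structure of the configuration with the fact that erasing ramified points can only decrease the hyperbolic metric of an orbifold. First I would reduce to a uniform estimate near the unit circle. For $R>0$, let $\Or_R$ denote the orbifold with underlying surface $\C$ whose ramified points are $\{z_i/R\}$, all of ramification value $2$. The map $w\mapsto Rw$ is an isomorphism of orbifolds from $\Or_R$ onto $\Or$, hence an isometry for the respective hyperbolic metrics, so that $\rho_{\Or}(z)=\tfrac{1}{|z|}\,\rho_{\Or_{|z|}}(z/|z|)$. Thus it suffices to produce $\delta>0$ and $R_0$ with $\rho_{\Or_R}(w)\ge\delta$ for all $R\ge R_0$ and all $w$ with $|w|=1$. (Throughout one uses that $|z_j|\to\infty$, which is forced by the requirement that the ramification locus $\{z_i\}$ of $\Or$ be discrete.)

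Next I would use the growth condition to pin down three ramified points of $\Or_R$ in three fixed, pairwise disjoint annuli. Since $|z_j|$ increases to $\infty$ with $|z_{j+1}|\le K|z_j|$, for every sufficiently large $S$ there is an index $j$ with $|z_j|\in[S,KS)$: take the least $j$ with $|z_j|\ge S$; then $|z_{j-1}|<S$, so $|z_j|\le K|z_{j-1}|<KS$. Applying this with $S=R/K^2$, $S=R$, and $S=RK^2$ yields, for $R$ large, three ramified points $p,q,r$ of $\Or_R$ with $|p|\in[K^{-2},K^{-1})$, $|q|\in[1,K)$, and $|r|\in[K^{2},K^{3})$; these are distinct and lie in the compact annuli $\cl{A_1}:=\{K^{-2}\le|w|\le K^{-1}\}$, $\cl{A_2}:=\{1\le|w|\le K\}$, $\cl{A_3}:=\{K^{2}\le|w|\le K^{3}\}$, which have pairwise disjoint closures. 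Since $\{p,q,r\}\subseteq\{z_i/R\}$, the identity is a holomorphic orbifold morphism $\Or_R\to\Or'$, where $\Or':=(\C,\{p,q,r\},2)$, and $\Or'$ is hyperbolic (three ramified points of value $2$ together with the puncture at $\infty$ give negative orbifold Euler characteristic). By the Schwarz--Pick inequality for orbifolds, $\rho_{\Or_R}(w)\ge\rho_{\Or'}(w)$ for every $w$, so it is enough to bound $\rho_{\Or'}$ from below on $\{|w|=1\}$, uniformly over all admissible triples $(p,q,r)\in\cl{A_1}\times\cl{A_2}\times\cl{A_3}$.

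For this final bound I would invoke continuity of the hyperbolic metric of $(\C,\{p,q,r\},2)$ --- a sphere with four marked points --- with respect to $(p,q,r)$, valid as long as $p,q,r$ remain distinct; this follows, for instance, from Carath\'eodory kernel convergence of the universal coverings, or from continuity of the hyperbolic metric in families of Riemann surfaces with cone points. On the compact set $\cl{A_1}\times\cl{A_2}\times\cl{A_3}\times\{|w|=1\}$ the triple $(p,q,r)$ never degenerates, precisely because the three annuli have disjoint closures; moreover $\rho_{(\C,\{p,q,r\},2)}(w)>0$ whenever $w\notin\{p,q,r\}$ and tends to $+\infty$ as $w$ approaches a ramified point, so the only boundary behaviour occurring on this set --- the base point $w$ coinciding with $q$ when $|q|=1$ --- only makes the metric infinite and is harmless. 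Hence $\rho_{\Or'}(w)$ attains a strictly positive minimum $\delta$ on this set, and together with the first two steps this gives $\rho_{\Or}(z)\ge\delta/|z|$ for all large $|z|$, as claimed.

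I expect the genuine work to lie in making the last step rigorous: one must know that the hyperbolic metric of $(\C,\{p,q,r\},2)$ depends continuously on the cone points and does not collapse as those points approach the bounding circles of the annuli --- equivalently, that every orbifold in the compact admissible family is non-degenerate and truly hyperbolic. Choosing the three annuli with pairwise disjoint closures is exactly what secures this, and the remaining boundary case (the base point hitting a cone point) is trivial since there the desired lower bound holds automatically. By contrast, the scaling identity, the extraction of the three cone points, and the orbifold form of the Schwarz--Pick lemma are all routine.
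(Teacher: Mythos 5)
Your proposal is correct and follows essentially the same strategy as the paper: rescale to the unit circle, discard all but finitely many ramified points to dominate $\rho_{\Or}$ by the metric of a signature-$(2,2,2)$ plane orbifold whose cone points are confined to fixed compact annuli determined by $K$, and conclude by continuity of the orbifold metric in the cone points plus compactness. The only differences are cosmetic (the paper normalises one ramified point to $0$ and uses two annuli rather than three, and it proves the continuity statement you invoke as a separate theorem via quasiconformal deformation).
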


The orbifold for which the set of ramified points
is given by $\{2 k\pi i: k\in\Z\}$  shows that our 
estimate is best possible (see proof of Proposition \ref{prop_cos_est}).
\begin{Remark}
If we replace the ramified points by punctures, i.e., if we consider 
the hyperbolic domain $U:\C\setminus\{ z_j\}$ instead of the orbifold $\Or$, 
the same bound for the asymptotic behaviour of the 
density map near $\infty$ can be obtained using standard estimates of 
the hyperbolic metric in the twice-punctured plane \cite[Lemma 2.1]{rempe_5}.
\end{Remark}

 It will become clear in Section \ref{sec associate} 
why strongly subhyperbolic maps are exactly those maps 
which can be approached with orbifold theory. 
This still leaves open
the question what can be said about topological dynamics of maps that 
are subhyperbolic but not strongly subhyperbolic. 
We believe that if $\J(f)$ contains an asymptotic 
value of $f$ then, similar to the case of exponential maps, there is no 
conjugacy between $f$ and any (suitable) disjoint type map $g$  
on their escaping sets; this is work in progress. 
However, we have no indication of what to expect for maps 
whose Julia sets contain no asymptotic values but sequences of points with 
unbounded local degree. It would be very 
interesting to explore this problem, in particular since there 
are prominent examples of such maps like \emph{Poincar\'{e} functions} 
corresponding to certain hyperbolic polynomials; an elaboration 
of such an example is given in Appendix B.

\subsection*{Structure of the article}
In Section \ref{sec associate} we 
develop the concept of orbifolds \emph{dynamically associated}
to a strongly subhyperbolic map $f$; the main consequence is that we obtain a
hyperbolic orbifold $\Or_f$ such that $f$ is expanding with respect to the  corresponding 
hyperbolic metric. 
Later, in Section \ref{sec uniform}, we prove that the expansion of $f$ is actually 
uniform. 
The key for this will be (the proof of) Theorem \ref{thm_main2}.
Finally, Section \ref{sec semiconjugacy} addresses 
the construction of the semiconjugacy itself.

\subsection*{Acknowledgements}
My special thanks go to my supervisor, Lasse Rempe for introducing me to 
the research problem and for his continuous help and support. Furthermore, 
I would like to thank Adam Epstein and Mary Rees for interesting and helpful 
discussions. 

\section{Preliminaries}
\label{sec_prel}

If not stated differently, we will assume throughout this article
that the considered maps are transcendental entire.
We denote the complex plane by $\C$, 
the Riemann sphere by $\widehat{\C}:=\C\cup\{\infty\}$, 
and the punctured plane by $\C^{*}:=\C\setminus\{ 0\}$.
We write $\D$ for the unit disk 
and $\H$ for the upper half-plane. 
The Euclidean disk centred at $c$ with radius $r$ is denoted by $D_r(c)$.
If not stated differently, the boundary $\partial A$ and the closure $\overline{A}$ 
of a set $A\subset\C$ is always understood to be taken relative to the complex plane. 
The Euclidean distance between two sets $A,B\subset\C$ will be denoted by $\dist(A,B)$.
For a sequence $n_i$ of natural numbers we write $\lcm\lbrace n_i\rbrace$ 
for their least common multiple.

\subsection{Background on holomorphic dynamics}
\label{subs_hol_dyn}
The \emph{Fatou set} of a map $f$ is the set of all points in $\C$ that have a 
neighbourhood in which the iterates $\{f^n\}$ form a normal family 
in the sense of Montel. Its complement $\J(f):=\C\setminus\F(f)$ is called the 
\emph{Julia set} of $f$. Recall that the \emph{escaping set} of $f$ is given by
\begin{eqnarray*}
I(f):=\lbrace z\in\C: \; f^{n}(z)\rightarrow\infty\text{ as }n\rightarrow\infty\rbrace.
\end{eqnarray*}
We say that a point $z\in\C$ 
is a \emph{periodic point} of $f$ if 
there exists an integer $n\geq 1$ such that $f^n(z)=z$. 
The smallest $n$ 
with this property is called the \emph{period} of $z$. A periodic 
point of period one is called a \emph{fixed point}. 
We call a point $z$ \emph{preperiodic} under $f$
if some image $f^n(z)$, $n\geq1$, of $z$ is periodic.
Note that every periodic point is also preperiodic.
To avoid confusion, we say that a point $z$ is \emph{strictly preperiodic} 
if it is preperiodic but not periodic. 
Let $z$ be a periodic point of $f$ of 
period $n$. 
 We call $\mu(z):=(f^n)^{'}(z)$ the 
\emph{multiplier} of $z$. A periodic point $z$ is called 
\emph{attracting} if $0\leq \vert \mu(z)\vert<1$, 
\emph{indifferent} if $\vert \mu(z)\vert=1$ and 
\emph{repelling} if $\vert \mu(z)\vert>1$. 
Since the multiplier of an indifferent periodic point 
is of the from $\e^{2\pi i t}$ with $0\leq t<1$, 
we can distinguish between \emph{rationally} and 
\emph{irrationally indifferent} points, according to 
whether $t$ is rational or not.
The set of all points whose orbits converge to an attracting periodic cycle
 is called the \emph{attracting basin} of this cycle. 

We denote the set of all critical points of $f$ by $\Crit(f)$, 
the set of all critical values by $C(f)=f(\Crit(f))$ 
and the set of all (finite) asymptotic values by $A(f)$. 
The set of \emph{singular values} of $f$, denoted by $S(f)$, 
is the smallest closed set such that 
$f: \C\setminus f^{-1}(S(f))\rightarrow \C\setminus S(f)$ is a covering map. 
It is well-known that $S(f):=\overline{C(f)\cup A(f)}$.
Finally, we denote the \emph{postsingular set} of $f$ by $P(f):=\overline{\bigcup_{n\geq 0} f^{n}(S(f))}$.

For more background on holomorphic dynamics we refer the reader to \cite{milnor,bergweiler1}.

\subsection{Background on Riemann orbifolds}
\label{background r o}

An \emph{orbifold} is a space which is locally modelled 
on the quotient of an open set in $\R^n$ by the 
linear action of a finite group. 
For a general introduction see \cite[$\S$ 13]{thurston_1}. 
Throughout this article we will need only  orbifolds modelled 
on Riemann surfaces, and for a more detailed introduction to
this topic see e.g. \cite{thurston_2, mcmullen_2, milnor}.

\begin{Definition}[Riemann orbifold]
\label{def r o}
A \emph{Riemann orbifold} is a pair $(S,\nu)$, 
where $S$ is a Riemann surface and 
$\nu:S\rightarrow\N_{\geq 1}$ is a map called the \emph{ramification map}, such that  
\begin{eqnarray*}
\lbrace z\in S\; :\; \nu(z)>1\rbrace
\end{eqnarray*}
is discrete. A point $z\in S$ with $\nu(z)>1$ is called a 
\emph{ramified} or \emph{marked point}. 
The \emph{signature} of an orbifold is the list of values 
that the ramification map $\nu$ assumes at the ramified points, 
where a value is repeated as often as it occurs as $\nu(z)$ for some ramified $z\in S$.
\end{Definition} 
A traditional Riemann surface can be regarded as a 
Riemann orbifold with ramification map $\nu\equiv 1$. 
Throughout this article, whenever we use the expression orbifold, we  
will always mean a Riemann orbifold. 

Recall that for a holomorphic map  $f:\widetilde{S}\rightarrow S$ 
between Riemann surfaces, the \emph{local degree} $\degr(f,z_0)$ of $f$ 
at a point $z_0\in \widetilde{S}$ is the unique integer $n=n(z_0)\geq 1$, 
such that 
\begin{eqnarray*}
f(z)=f(z_0) + a_n (z-z_0)^n + \text{(higher terms)}
\end{eqnarray*}
and $a_n\neq 0$. Thus $z_0$ is a \emph{critical} or 
\emph{branch point} if and only if $n(z_0)>1$.

The map $f$ is called a \emph{branched covering map} if every 
point in $S$ has a connected neighbourhood $U$ such that $f$ maps any 
component of $f^{-1}(U)$ onto $U$ as a proper map. 
Recall that a map $f:\widetilde{V}\rightarrow V$ is called \emph{proper} 
if the preimage $f^{-1}(K)$ of any compact set $K\subset V$ is a compact 
subset of $\widetilde{V}$. 

\begin{Definition}[Holomorphic map, covering]
Let $\Ort=(\widetilde{S},\tilde{\nu})$ and $\Or=(S, \nu)$ 
be Riemann orbifolds. A \emph{holomorphic map} $f:\Ort\rightarrow\Or$ is 
a holomorphic map $f:\widetilde{S}\rightarrow S$ between the underlying 
Riemann surfaces such that, for each $z\in\widetilde{S}$,
\begin{eqnarray}
\nu(f(z)) \text{ divides } \degr(f,z)\cdot \tilde{\nu}(z).
\end{eqnarray}
If $f:\widetilde{S}\rightarrow S$ is a branched covering map 
with $\nu(f(z)) = \degr(f,z)\cdot\tilde{\nu}(z)$ 
for all $z\in \widetilde{S}$, then $f:\Ort\rightarrow\Or$ 
is an \emph{orbifold covering map}. If additionally the surface $\widetilde{S}$ 
is simply-connected, then we call $\Ort$ a 
\emph{universal covering orbifold} of $\Or$.
\end{Definition}
\begin{Remark}
In the standard terminology, where an orbifold is defined via atlases
and group actions, the definition of a holomorphic map $f$ between 
two orbifolds is equivalent to a ``local lifting property'';
if $f$ is a covering then every such local lift can be chosen to
be an embedding. For more details, see \cite[A2]{mcmullen_2}.
\end{Remark}

Note that if $f:\Ort\to\Or$ is a covering then this is not necessarily 
true for the map $f:\widetilde{S}\to S$ between the underlying surfaces. 

Recall that by the Uniformization Theorem for Riemann surfaces, every 
Riemann surface has a universal cover that is conformally 
equivalent to either $\widehat{\C}$, $\C$ or $\D$. 
The following theorem tells us that the same is true 
for almost all Riemann orbifolds.
\begin{Theorem}[Uniformization of Riemann orbifolds]
\cite[Theorem A2]{mcmullen_2}
\label{uniform}
Let $\Or=(S,\nu)$ be a Riemann orbifold. Then $\Or$ has no universal 
covering orbifold if and only if $\Or$ is isomorphic to $\widehat{\C}$ 
with signature $(l)$ or $(l,k)$, where $l\neq k$. In all other cases the 
universal cover is unique up to conformal isomorphism over the surface $S$ and 
hence given by either $\widehat{\C}$, $\C$ or $\D$.
\end{Theorem}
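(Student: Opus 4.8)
The plan is to prove the two implications separately and to obtain the existence of a universal covering orbifold by reducing to the classical uniformization of Riemann surfaces; throughout, a universal covering orbifold is understood, as in the conclusion of the theorem, to have trivial ramification, i.e.\ to be an honest Riemann surface.

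For the ``only if'' direction I would argue directly that the two listed orbifolds admit no orbifold covering $q:\widetilde S\to S$ by a simply-connected Riemann surface. Away from the ramified points the defining identity $\nu(q(z))=\degr(q,z)\cdot\tilde\nu(z)$ forces $\degr(q,z)=1$ (both ramification maps being $1$ there), so $q$ is unbranched over $S$ minus its marked points, while properness of $q$ near a marked point $p$ rules out branches modelled on $\exp$ and makes each branch a finite cyclic cover $w\mapsto w^{d}$ of a punctured disk with $d=\nu(p)$ (using $\tilde\nu\equiv1$). For $S=\widehat\C$ with a single marked point of order $l$, the complement is the simply-connected plane, so $q$ is a biholomorphism onto it, and filling in $q^{-1}(p)$ makes $\widetilde S=\widehat\C$ with $q$ a homeomorphism; then $l=\degr(q,q^{-1}(p))=1$, a contradiction. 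For $S=\widehat\C$ with marked points of orders $l\neq k$, the complement is $\C^*$, a connected covering of which is either the infinite cyclic one---excluded by properness at the punctures---or $w\mapsto w^m$; filling in then forces $l=m=k$, again impossible.

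For the ``if'' direction the key reduction I would use is that it suffices to produce \emph{some} orbifold covering $\Or_0\to\Or$ whose total space is a genuine Riemann surface $S_0$ (that is, to show $\Or$ is \emph{good}): composing with the classical universal cover $X\to S_0$, with $X\in\{\widehat\C,\C,\D\}$ by the Uniformization Theorem for surfaces, gives an orbifold covering $X\to\Or$ with simply-connected total surface---a universal covering orbifold, automatically one of the three models. I would then exhibit $\Or_0$ according to the sign of the orbifold Euler characteristic $\chi^{\mathrm{orb}}(\Or)=\chi(S)-\sum_p(1-1/\nu(p))$ in the finite-type case: if $\chi^{\mathrm{orb}}>0$ then, the excluded teardrop and football signatures aside, $\Or$ has signature $(n,n)$, a dihedral signature, or one of the three platonic signatures, and is realised as $\widehat\C/\Gamma$ for a finite subgroup $\Gamma$ of the M\"obius group; if $\chi^{\mathrm{orb}}=0$ it is one of the four Euclidean signatures (or $S$ is itself $\C$, $\C^*$, or a torus with trivial $\nu$) and is realised as $\C/\Gamma$ for a crystallographic group; and if $\chi^{\mathrm{orb}}<0$, or if $S$ is not of finite type, it is realised as $\D/\Gamma$ for a Fuchsian group. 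In each case passing to a torsion-free subgroup $\Gamma_0\leq\Gamma$ (of finite index, when $\Gamma$ is finite or finitely generated) yields $S_0=X/\Gamma_0$.

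The hyperbolic case is where the substantial work lies, and I expect it to be the main obstacle. For $\Or$ of finite type with $\chi^{\mathrm{orb}}<0$ I would build the Fuchsian group by the classical pasting construction---decompose $\Or$ into hyperbolic pairs of pants and cone-point pieces and apply Poincar\'e's polygon theorem---the inequality $\chi^{\mathrm{orb}}<0$ being exactly the Gauss--Bonnet condition permitting a metric of curvature $-1$ with the prescribed cone angles $2\pi/\nu(p)$. For $S$ of infinite type I would exhaust $S$ by finite-type subsurfaces carrying the induced marked points (treating the boundary as ideal), uniformise each, and extract a normal-families limit of the uniformising maps to get $\D\to\Or$. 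Finally, uniqueness up to conformal isomorphism over $S$ is formal: orbifold coverings satisfy the path- and homotopy-lifting properties, so between two universal covering orbifolds one constructs mutually inverse base-point-preserving lifts, which are deck transformations and hence the identity by uniqueness of lifts; and by the classical Uniformization Theorem the common total surface is one of $\widehat\C$, $\C$, $\D$.
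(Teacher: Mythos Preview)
The paper does not supply its own proof of this theorem: it is stated with an explicit citation to \cite[Theorem A2]{mcmullen_2} and used as a black box. So there is no ``paper's proof'' to compare against; I can only assess your outline on its own merits.

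Your sketch is along standard lines and is essentially sound. The ``only if'' direction is close to complete: the observation that any simply-connected branched cover of $\widehat{\C}$ restricts to an unbranched cover of the complement of the marked points, combined with the classification of covers of $\C$ and $\C^{*}$, does force the contradictions you describe. One small wrinkle worth making explicit in the $(l,k)$ case is why the $\exp$ cover cannot be completed to a branched covering over the two punctures (you invoke properness, which is correct, but spelling out that the preimage of a small disc around a puncture is a half-plane makes the failure of properness transparent).

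For the ``if'' direction, the reduction to ``goodness'' followed by classical uniformization is the right strategy, and your trichotomy by the sign of $\chi^{\mathrm{orb}}$ is the standard route. The elliptic and parabolic cases are genuine finite checks. The hyperbolic finite-type case via Poincar\'e's polygon theorem is where the content lies, as you say; this is not short to write out in full but is classical. The infinite-type case by exhaustion and normal families is also standard but deserves more care than a single sentence: one must control the limiting map so that it remains a branched covering with the correct local degrees, not merely a holomorphic map. Your uniqueness argument via lifting properties is correct in spirit, but note that path-lifting for orbifold covers is slightly more delicate than in the unbranched case (paths through ramified points) and is usually handled by passing to the regular locus or by working with the orbifold fundamental group directly.

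In short: your outline is correct and follows the approach one finds in McMullen or Thurston; the paper itself simply quotes the result.
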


In analogy to Riemann surfaces, 
we will call an orbifold $\Or$ \emph{elliptic, parabolic} or \emph{hyperbolic} 
if it is covered by $\widehat{\C}, \C$ or $\D$, respectively. 

\begin{Remark}
Let $\Ort,\Or$ be orbifolds that have a universal cover. 
Then a map $f:\Ort\to\Or$
is a covering if and only if 
it lifts to a conformal isomorphism 
between the universal covering spaces \cite[Lemma E.2]{milnor}.
\end{Remark}

For a connected orbifold $\Or=(S,\nu)$ the \emph{Euler characteristic} 
$\chi(\Or)$ is given by the equation
\begin{eqnarray*}
\chi(\Or) := \chi(S) - \sum_{z\in S} \left( 1 -\frac{1}{\nu(z)}\right),
\end{eqnarray*}
where $\chi(S)$ denotes the Euler characteristic of the surface $S$. 
Note that ramified point cause a reduction of $\chi(\Or)$. 
As for Riemann surfaces, a Riemann orbifold with negative 
Euler characteristic is always hyperbolic. This also implies that, 
roughly speaking, most orbifolds are hyperbolic. For the precise 
list of spherical and parabolic orbifolds, see the details  
of \cite[Theorem A2]{mcmullen_2}. 

Let $C$ be the uniformized universal covering surface of $\Or$ 
(i.e., $C\in\{\widehat{\C},\C,\D\}$) and denote by $\rho_C(z)\vert dz\vert$ 
its unique complete conformal metric of constant 
curvature $1$, $0$ or $-1$, respectively. By pushing forward this 
metric by a universal covering map 
we obtain a Riemannian metric on $\Or$ that can be written 
as $\rho_{\Or}(w)\vert dw\vert$ (in terms of a 
local uniformizing parameter $w$), and $\rho_{\Or}(w)$ is nonzero and smooth 
except at the ramified points of $\Or$. 
We call this metric the \emph{orbifold metric} of $\Or$.
Observe that at a ramified  point, say $w_0$, with ramification value $m$, 
the density has a singularity of the type
$\vert w-w_0\vert^{(1-m)/m}.$
More precisely, if we choose a local branched covering near $0$, e.g.,
$z(w)=(w-w_0)^m$, then the induced metric $\rho(z(w))\vert dz/dw\vert\cdot\vert dw\vert$
is smooth and nonsingular throughout some neighbourhood of $0$ in 
the $z$-plane.

Note that $\rho_{\Or}(w)\vert dw\vert$ is again a complete metric with  
constant curvature $1$, $0$ or $-1$, respectively, everywhere except at 
the marked points (which are singularities of the curvature).

In this article we are mainly interested in hyperbolic Riemann orbifolds.
The well-known Pick Theorem for hyperbolic surfaces generalizes to
hyperbolic orbifolds as well and will be of great use for us.
\begin{Theorem}
\cite[Proposition 17.4]{thurston_2}
\label{pick}
A holomorphic map between two hyperbolic orbifolds can never 
increase distances as measured in the hyperbolic orbifold metric. 
Distances are strictly decreased, unless the map is a covering map; 
in this case it is a local isometry.
\end{Theorem}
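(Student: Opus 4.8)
The plan is to reduce the statement to the classical Schwarz--Pick lemma on $\D$ by lifting everything to universal covering disks. Let $f\colon\Ort\to\Or$ be a holomorphic map between hyperbolic orbifolds, and let $\pi_{\Ort}\colon\D\to\Ort$ and $\pi_{\Or}\colon\D\to\Or$ be universal covering maps; these exist by Theorem \ref{uniform} since both orbifolds are hyperbolic. By construction the orbifold metrics $\rho_{\Ort}$ and $\rho_{\Or}$ are the push-forwards of the Poincar\'e metric $\rho_{\D}$ under $\pi_{\Ort}$ and $\pi_{\Or}$ respectively, so each covering projection is a local isometry at every point where it is a local homeomorphism, i.e.\ away from the discrete set of ramified points.

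The first step is a lifting lemma: there exists a holomorphic map $\tilde f\colon\D\to\D$ with $\pi_{\Or}\circ\tilde f=f\circ\pi_{\Ort}$. To obtain it I would consider the composition $g:=f\circ\pi_{\Ort}\colon\D\to\Or$, where the source disk carries the trivial ramification map. Using the chain rule for local degrees together with the covering identity $\tilde\nu(\pi_{\Ort}(z))=\degr(\pi_{\Ort},z)$ and the orbifold-holomorphy condition for $f$, one checks that $\nu(g(z))$ divides $\degr(g,z)$ for every $z$, so $g$ is a holomorphic orbifold map out of the trivial orbifold on $\D$. Since $\D$ is simply connected, the local lifting property of holomorphic orbifold maps (recalled in the Remark after the definition of orbifold coverings, cf.\ \cite[A2]{mcmullen_2}) together with a monodromy argument along paths in $\D$ yields a single-valued global lift $\tilde f\colon\D\to\D$ with $\pi_{\Or}\circ\tilde f=g$; the only delicate point is the behaviour over the isolated preimages of ramified points of $\Or$, which is handled in the local branched model $z=w^{m}$, where the lift extends holomorphically by removability.

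The second step is to apply the classical Schwarz--Pick lemma to $\tilde f\colon\D\to\D$: it satisfies $\tilde f^{*}\rho_{\D}\le\rho_{\D}$, with equality at one (equivalently, at every) point if and only if $\tilde f$ is a conformal automorphism of $\D$. Pushing this down proves the theorem. Given a tangent vector $v$ at a non-ramified point $w\in\Ort$, choose $z\in\D$ with $\pi_{\Ort}(z)=w$ and $\hat v$ with $d\pi_{\Ort}(\hat v)=v$; then, using that $\pi_{\Ort}$ and $\pi_{\Or}$ are local isometries there,
\begin{align*}
\|df(v)\|_{\Or}=\bigl\|d\pi_{\Or}\bigl(d\tilde f(\hat v)\bigr)\bigr\|_{\Or}=\|d\tilde f(\hat v)\|_{\D}\le\|\hat v\|_{\D}=\|v\|_{\Ort}.
\end{align*}
Since the ramified points of $\Ort$ form a discrete set, this pointwise contraction on the smooth part integrates to the distance-nonincreasing statement for every rectifiable path, hence for the orbifold distance. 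For the rigidity clause, by the characterisation of orbifold coverings recalled above, $f$ is a covering map if and only if its lift $\tilde f$ is a conformal automorphism of $\D$; in that case the displayed inequality is an equality everywhere and $f$ is a local isometry. Conversely, if $f$ is not a covering, then $\tilde f$ is not an automorphism, so $\tilde f$ strictly decreases $\rho_{\D}$ at every point, the inequality above is strict at every non-ramified point, and $f$ therefore strictly decreases orbifold distances.

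The main obstacle is the lifting lemma. In contrast to the covering case treated in the Remarks of the excerpt, where one already knows the lift to be a conformal isomorphism, here $f$ is only holomorphic, and one must promote the local lifts guaranteed by the orbifold structure to a globally defined holomorphic map $\D\to\D$ — in particular checking that the monodromy around each ramified point of $\Or$ is trivial and that the resulting lift acquires no singularity over those points. Once this is established, the remainder is a formal consequence of Schwarz--Pick on $\D$ and the fact that the universal covering projections are local isometries off the discrete ramification loci.
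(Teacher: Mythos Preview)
The paper does not supply its own proof of this theorem: it is quoted verbatim as \cite[Proposition 17.4]{thurston_2} and used as a black box throughout (e.g.\ in Proposition~\ref{expansion} and Theorem~\ref{thm_orb_main}). So there is no ``paper's proof'' to compare against.

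Your argument is the standard one and is essentially correct. Lifting $f\circ\pi_{\Ort}$ through the branched cover $\pi_{\Or}$ is indeed the only nontrivial step, and your sketch contains the right ingredients: restrict to the complement of $B:=g^{-1}(\{\text{ramified points of }\Or\})$, where $\pi_{\Or}$ becomes an honest cover; observe that $\pi_1(\D\setminus B)$ is generated by small loops around points of $B$; use the divisibility condition $\nu(g(z_0))\mid\deg(g,z_0)$ to see that each such loop maps under $g$ to a loop winding a multiple of $m=\nu(g(z_0))$ times around the ramified image, hence lifts closed; and finally extend across $B$ by Riemann removability. Once the lift $\tilde f\colon\D\to\D$ exists, the Remark after the definition of orbifold coverings (``$f$ is a covering iff it lifts to a conformal isomorphism'') together with Schwarz--Pick gives both the nonexpansion and the equality case exactly as you say. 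For the strict-decrease clause, it is worth making explicit that the orbifold metric is complete, so any two points are joined by a minimizing geodesic along which the pointwise strict contraction integrates to a strict inequality of distances; the finitely many ramified points on such a geodesic have measure zero and do not affect the integral.
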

In particular, if $\Ort$ and $\Or$ are two orbifolds such that 
$\Ort\hookrightarrow\Or$ is holomorphic, then 
$\rho_{\Ort}(z)\geq\rho_{\Or}(z)$ for all $z\in\Ort$.

\subsection{Hyperbolic and subhyperbolic maps}
Recall that our main result is the existence of a semiconjugacy 
between a strongly subhyperbolic and a disjoint type map on their Julia sets. 
We will now recall the definitions and present briefly the relevant dynamical properties 
of those types of functions.
\begin{Definition}
\label{def_hyp}
A transcendental entire function $g$ is called \emph{hyperbolic} if $P(g)$ is a 
compact subset of $\F(g)$. 
\end{Definition}
It follows from classical results in holomorphic dynamics that 
the Fatou set of a hyperbolic (transcendental entire) 
map is non-empty and consists of attracting basins 
that correspond to finitely many attracting cycles. 

A hyperbolic \emph{rational} map is classically defined as a function 
which \emph{expands} a conformal Riemannian metric defined on a neighbourhood 
of its Julia set. 
For entire transcendental maps 
one can give a similar description: it is easy to derive from 
Definition \ref{def_hyp} that if $g$ is hyperbolic, then there exists a 
bounded neighbourhood $D$ of $P(g)$ such that $\overline{g(D)}\subset D$.
Note that by Montel's Theorem, $g^n\vert_D$ is a normal family, hence 
$\C\setminus\overline{D}$ is a neighbourhood of $\J(g)$. 
Furthermore, the map $g:\C\setminus g^{-1}(\overline{D})\to \C\setminus\overline{D}$ is
then a covering map which \emph{uniformly expands} the hyperbolic metric on the domain
$\C\setminus\overline{D}$ \cite[Lemma 5.1]{rempe_5}.

Note that not every hyperbolic rational map satisfies 
Definition \ref{def_hyp}:
By \cite[Theorem 19.1]{milnor}, a rational map is hyperbolic  
if and only if $P_{\J}=\emptyset$ or, equivalently, 
every critical point converges to an attracting periodic orbit.
In certain cases (including all nonconstant polynomials), the point at 
$\infty$ can be such an attractor as well.

\begin{Definition}
\label{disjoint type}
A hyperbolic transcendental entire map 
$g$ is said to be of \emph{disjoint type} if $\F(g)$ is connected.
\end{Definition}

Since every component of the Fatou set of a hyperbolic transcendental entire map is simply-connected 
\cite[Proposition 3, Theorem 1]{eremenko_lyubich_2}, it follows 
that the Fatou set of a disjoint type map is connected and simply-connected.

\begin{Proposition}
\label{prop_dt}
The following statements are equivalent:
\begin{enumerate}
\item The map $g$ is of disjoint type.
\item $g$ has a unique attracting fixed point and $P(g)$ is a 
compact subset of its immediate basin of attraction.
\item There exists a bounded Jordan domain $D\supset S(g)$ 
such that $\overline{g(D)}\subset D$.
\end{enumerate}

\end{Proposition}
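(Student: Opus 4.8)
The plan is to establish the chain of equivalences by proving $(1)\Rightarrow(2)\Rightarrow(3)\Rightarrow(1)$, using the structure of hyperbolic transcendental entire maps recalled in the text.

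For $(1)\Rightarrow(2)$: since $g$ is hyperbolic, $\F(g)$ is nonempty and consists of finitely many attracting basins; disjoint type means $\F(g)$ is connected, and by \cite{eremenko_lyubich_2} every Fatou component is simply-connected. A connected Fatou set that is a union of attracting basins must therefore be a single immediate basin of attraction of one attracting cycle. I would then argue that the cycle has period one: an immediate basin component is periodic of some period $p$, and the full basin, being a union of the forward and backward images of the immediate basin components, is connected only if there is a single immediate component, forcing $p=1$. (One can also invoke that $g$ is transcendental entire, so the point at infinity is an essential singularity and cannot be part of the attracting cycle, and that distinct immediate basin components are disjoint open sets whose union cannot be connected.) Thus $g$ has a unique attracting fixed point $\alpha$ with $\F(g)$ equal to its immediate basin; and $P(g)$, being compact in $\F(g)$ by hyperbolicity, lies in this immediate basin.

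For $(2)\Rightarrow(3)$: let $\alpha$ be the attracting fixed point and $B$ its immediate basin, which by the above is simply-connected and equals $\F(g)$. Since $P(g)\supset S(g)$ is a compact subset of $B$, I would first enclose $S(g)$ (together with $\alpha$) in a bounded Jordan domain $D_0\subset B$. The standard device is to take a large round disk $\Delta$ around $\alpha$ whose closure lies in $B$ and contains $S(g)$ — such a disk exists by compactness of $P(g)$ and connectedness of $B$ — and then replace $\Delta$ by a forward iterate: since $g|_B$ attracts to $\alpha$, there is $n$ with $\overline{g^n(\Delta)}\subset \Delta$. Setting $D:=$ the component of $g^{-n}(\Delta)$... no: the cleaner choice is $D:=\Delta$ itself once $\Delta$ is chosen large enough that $\overline{g(\Delta)}\subset\Delta$, which is possible because $g$ restricted to a neighbourhood of $\alpha$ contracts and, more globally, one can take a sublevel set $\{z: \text{(Green-type function)}\le c\}$ of $B$ that is a bounded Jordan domain containing $S(g)$ and forward-invariant. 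The main point to verify is that $D$ can be chosen to be a \emph{Jordan} domain (not merely simply-connected open): this follows because $B$ is simply-connected, so a Riemann map from $\D$ to $B$ carries round circles to Jordan curves bounding domains exhausting $B$, and for a circle of radius close enough to $1$ the enclosed Jordan domain contains the compact set $S(g)$; composing with enough iterates of $g$ (which map $B$ into itself) yields forward invariance $\overline{g(D)}\subset D$.

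For $(3)\Rightarrow(1)$: given such a $D$, iteration gives $\overline{g^{n+1}(D)}\subset g^n(D)$, so $\bigcap_n g^n(D)$ is a nonempty forward-invariant compact set, and by Montel's theorem (as in the remark preceding the proposition) $\{g^n\}$ is normal on $D$, so $D\subset\F(g)$. Hence $\C\setminus\overline{D}\supset\J(g)$, and moreover $g:\C\setminus g^{-1}(\overline D)\to\C\setminus\overline D$ is a covering map. Now $D\subset\F(g)$ forces all of $S(g)$ into a single Fatou component (the one containing the connected set $D$); standard theory of transcendental hyperbolic maps then gives that $g$ is hyperbolic with that component being the immediate basin of an attracting fixed point, and that $\F(g)$ is connected — the key being that every Fatou component is eventually mapped into the one containing $D$, and since that component is the immediate basin of a fixed point, every component must already coincide with it. This establishes disjoint type. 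I expect the main obstacle to be the $(2)\Rightarrow(3)$ step, specifically producing a genuine bounded Jordan domain containing $S(g)$ with $\overline{g(D)}\subset D$; the simple-connectivity of the immediate basin together with a Riemann-map/exhaustion argument and a finite number of applications of $g$ is what makes it work, but care is needed to ensure the boundary curve is a Jordan curve and that forward invariance survives.
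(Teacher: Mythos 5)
Your overall plan --- the cycle $(1)\Rightarrow(2)\Rightarrow(3)\Rightarrow(1)$ --- is the same as the paper's, and your $(1)\Rightarrow(2)$ is fine. Each of the other two implications, however, contains a genuine gap. In $(2)\Rightarrow(3)$, the opening claim that there is a Euclidean disk around $\alpha$ containing $S(g)$ with closure in the immediate basin $B$ is false in general: $B$ need not contain any round disk about $\alpha$ large enough to swallow the compact set $S(g)$. Your fallback via the Riemann map $\psi:\D\to B$ with $\psi(0)=\alpha$ is the right object, but the crucial forward invariance is not justified: ``composing with enough iterates of $g$'' would at best give $\overline{g^n(D)}\subset D$ for large $n$, i.e.\ invariance under $g^n$, not under $g$. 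The clean fix is the Schwarz lemma: $h:=\psi^{-1}\circ g\circ\psi$ maps $\D$ into $\D$ with $h(0)=0$ and $\vert h'(0)\vert=\vert g'(\alpha)\vert<1$, so $h$ is not a rotation, whence $\max_{\vert w\vert=r}\vert h(w)\vert<r$ for every $r<1$; therefore $D:=\psi(D_r(0))$ is a bounded Jordan domain with $\overline{g(D)}\subset D$ for \emph{every} $r$, and it contains $S(g)$ once $r$ is close enough to $1$. Completed this way, your route differs from (and is arguably slicker than) the paper's, which instead takes a simply-connected neighbourhood $D_0$ of $P(g)$, forms the union of $\overline{D_0},g(\overline{D_0}),\dots,g^{n-1}(\overline{D_0})$ with a linearizing neighbourhood of the fixed point, fills the holes to obtain a full compact set mapped into its interior, and then chooses a Jordan domain squeezed between the image and the interior.

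In $(3)\Rightarrow(1)$, the step ``every Fatou component is eventually mapped into the one containing $D$, and since that component is the immediate basin of a fixed point, every component must already coincide with it'' is a non sequitur. A component $V$ with $g^n(V)$ contained in the immediate basin is merely a component of the \emph{full} basin, and preimage components of an invariant Fatou component are in general new components --- this is exactly the phenomenon that ``disjoint type'' is supposed to exclude, so it cannot be assumed. The paper's argument is to write $\F(g)=\bigcup_{n\ge0}g^{-n}(D)$ and to observe that $g(D)\subset D$ makes this an increasing union of open sets, from which connectedness of $\F(g)$ follows; to make even that airtight one needs that each $g^{-n}(D)$ is connected (equivalently, that every component of $g^{-1}(D)$ meets $D$), and it is precisely here that the hypothesis $S(g)\subset D$ is used. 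That is the content your sentence skips, and without it the implication is not proved.
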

\label{prop_disjoint_type}
\begin{proof}
Let $g$ be of disjoint type. In particular, $g$ is hyperbolic. Hence  
$P(g)$ is a compact subset of $\F(g)$. By definition, 
$\F(g)$ is connected, hence $P(g)$ is a compact subset of a 
completely invariant component of $\F(g)$, which can only be the immediate 
attracting basin of an attracting fixed point of $g$, 
showing that $(1)$ implies $(2)$.

We will just give a sketch of $(2)$ implies $(3)$,
for more details see e.g. proof of Proposition 3.7 in 
\cite{mihaljevic_brandt}. 
Let $z_0$ be the unique attracting fixed point of $g$
with immediate attracting basin $A^{*}(z_0)\supset P(g)$. 
Note that $P(g)$ must contain $z_0$. 
Furthermore, $P(g)$ has positive distance 
to $\partial A^{*}(z_0)$, hence there
is a simply-connected domain $D_0$ compactly
contained in $A^{*}(z_0)$ that contains a 
neighbourhood
of $P(g)$. Let $U$ be a linearising neighbourhood of $z_0$ and 
let $n$ be the smallest integer such that $g^n(\cl{D_0})\subset U$.
Taking the union of $\cl{U}$ and the compact sets 
$\cl{D_0}, g(\cl{D_0}),\dots , g^{n-1}(\cl{D_0})$ we obtain a compact connected
set $K$ that is mapped into its interior.
$K$ is not necessarily simply-connected; 
by the Open Mapping Theorem, we can fill the holes and obtain a
full set $\widetilde{K}$ that is mapped into its interior.
Now we choose $D$ to be a Jordan domain such that 
$g(\widetilde{K})\subset D\subset \widetilde{K}$.

To see that $(3)$ implies $(1)$, let us choose a 
domain $D\supset S(g)$ such that $\overline{g(D)}\subset D$. 
By Montel's Theorem, $D$ is contained in a component of $\F(g)$, 
so, in particular, $g$ is hyperbolic and $\F(g)$ is the union of attracting basins. 
Since every immediate attracting basin contains at least one 
singular value \cite[Theorem 7]{bergweiler1}, 
$g$ has a unique attracting cycle, which is a fixed point contained in $D$. 
Hence every point $z\in\F(g)$ is eventually mapped into $D$, showing that 
$\F(g)=\bigcup_{n\geq 0} g^{-n}(D)$. 
On the other hand, $g^{-(n+1)}(D)\supset g^{-n}(D)$ and $\F(g)$ is connected.
\end{proof}

Let $g$ be a transcendental entire map with bounded singular set $S(g)$,
let $D\subset\C$ be a bounded Jordan domain containing $S(g)$ 
and let $U:=\C\backslash\overline{D}$. Then $g :g^{-1}(U)\rightarrow U$ is a 
covering map and each component $T$ of $g^{-1}(U)$ is a simply 
connected unbounded Jordan domain; we call every such domain 
$T$ a \emph{tract} of $g$. 
Note that if $g$ is of disjoint type, then by Proposition 
\ref{prop_disjoint_type} we can choose $D$ such that 
$g^{-1}(\C\backslash \overline{D})$ is disjoint from $\overline{D}$. 
Using such a domain one can easily prove the following (well-known)
property of the escaping set of a disjoint type map.

\begin{Proposition}
\label{thm_esc_set_dis_type}
Let $g$ be of a disjoint type. Then $I(g)$ is disconnected.
\end{Proposition}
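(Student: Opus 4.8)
The plan is to exploit property (3) of Proposition \ref{prop_dt}: we may choose a bounded Jordan domain $D\supset S(g)$ with $\overline{g(D)}\subset D$ and, invoking the disjoint type hypothesis, arrange moreover that $g^{-1}(\C\setminus\overline{D})$ is disjoint from $\overline{D}$ (as remarked just before the statement). Write $U:=\C\setminus\overline{D}$, so that $g:g^{-1}(U)\to U$ is a covering map and $g^{-1}(U)$ is a disjoint union of tracts $T_i$, each a simply connected unbounded Jordan domain, with $\overline{T_i}\subset U$. The escaping set is contained in $\bigcup_i T_i$, since every point of $I(g)$ must eventually leave the forward-invariant set $\overline{D}$ and, once in $U$, all further iterates stay in $g^{-1}(U)$ (otherwise the orbit would re-enter $\overline{D}$, which is impossible as $\overline{D}$ is forward invariant and the point escapes). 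Hence $I(g)=I(g)\cap\bigcup_i T_i$.

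First I would observe that there is more than one tract: since $g$ is transcendental entire, $\infty$ is an essential singularity, so $g$ is infinite-to-one and $g^{-1}(U)$ cannot be a single tract on which $g$ restricts to a covering of $U$ — more concretely, $D$ is bounded and $g$ takes every value in $U$ infinitely often (by the Big Picard Theorem, omitting at most one value, and we may shrink $D$ to avoid that value), so $g^{-1}(U)$ has infinitely many components. Then I would note that each tract $T_i$ is closed in $\C$ relative to $\bigcup_j T_j$, being one of the connected components of the open set $g^{-1}(U)$, and distinct tracts have disjoint closures in $\C$ because $\overline{T_i}\subset U=\C\setminus\overline{D}$ while any two tracts are separated by $\overline{D}$; more carefully, the $T_i$ are the components of $g^{-1}(U)$ and each is both open and closed in $g^{-1}(U)$, hence $\{T_i\cap I(g)\}$ is a partition of $I(g)$ into relatively open (and relatively closed) subsets.

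To finish, I would show at least two of these pieces $T_i\cap I(g)$ are nonempty. The unbounded set $T_i$ meets $I(g)$: indeed, $g$ maps $T_i$ conformally (or as a covering) onto $U$, and in fact one shows that for disjoint type maps each tract contains points of the escaping set — e.g. by a standard contraction/expansion argument, the branch of $g^{-1}$ mapping $U$ into $T_i$ has a derivative estimate forcing points far out in $T_i$ to have all iterates far out (staying in the union of tracts) and tending to $\infty$; alternatively one quotes the well-known fact (Eremenko, or \cite{rempe_5}) that $I(g)\cap T_i\neq\emptyset$ for every tract $T_i$ of a disjoint type map. Since there are at least two tracts $T_1,T_2$, the sets $I(g)\cap T_1$ and $I(g)\cap\bigl(\bigcup_{i\neq 1}T_i\bigr)$ are a separation of $I(g)$ into two nonempty disjoint relatively open sets, so $I(g)$ is disconnected.

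The main obstacle is justifying that each tract actually contains escaping points (the nonemptiness of $I(g)\cap T_i$); this is where the disjoint type structure is genuinely used. The cleanest route is the derivative estimate: by Koebe distortion applied to the univalent branch $g^{-1}:U\to T_i$, one gets that deep inside $T_i$ the inverse branch is a strong contraction, equivalently $g$ is strongly expanding there, which lets one build a point whose forward orbit stays in $\bigcup_j T_j$ and escapes — this is exactly the argument underlying the classical fact that $I(g)\ne\emptyset$ for functions of bounded type, localized to a single tract. I would cite the relevant statement from \cite{rempe_5} or \cite{eremenko_lyubich_2} rather than reproduce it, since the separation argument above is the real content of the proposition.
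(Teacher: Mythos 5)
Your argument breaks down at the step ``there is more than one tract.'' This is false in general: a tract $T_i$ is a covering of $U=\C\setminus\overline{D}$, and since $U$ is doubly connected (fundamental group $\Z$), a \emph{single} tract can cover $U$ with infinite degree. The map $z\mapsto\lambda\e^z$ with $\lambda$ small is of disjoint type and has exactly one tract (a half-plane $\{\Rea z>\log r\}$ mapping onto $\{\vert w\vert>r\}$ as a universal covering), yet its escaping set is disconnected. Your justification --- that $g$ takes every value in $U$ infinitely often, hence $g^{-1}(U)$ has infinitely many components --- conflates the degree of the covering with the number of components. Consequently the proposed separation $I(g)\cap T_1$ versus $I(g)\cap\bigcup_{i\neq 1}T_i$ may be trivial, and the proof collapses exactly in the one-tract case.

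The paper's proof repairs precisely this point by refining the decomposition: it chooses a simple curve $\alpha\subset g^{-1}(D)\setminus\overline{D}$ joining $\partial D$ to $\infty$. Since $g(\alpha)\subset D$ and $\overline{D}$ is forward invariant, $\alpha$ consists of non-escaping points, and the preimages of $\alpha$ cut every tract into infinitely many \emph{fundamental domains}, on each of which $g$ is a conformal isomorphism onto the simply connected set $U\setminus\alpha$. Even a single tract is thereby split into infinitely many disjoint open pieces whose union contains $I(g)$. The rest of your argument is then sound and matches the paper: each piece contains a preimage of some escaping point $w$ (which exists by Eremenko's theorem), and preimages of escaping points escape, so one obtains a genuine separation of $I(g)$ into two nonempty relatively open sets. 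Your closing appeal to Koebe distortion to produce escaping points in each tract is also unnecessary once one uses the preimage trick. If you want to salvage your outline, replace ``tracts'' by ``fundamental domains'' throughout and add the construction of $\alpha$.
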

\begin{proof}
Let $D\supset S(f)$ be a Jordan domain such that 
$g^{-1}(\C\backslash \overline{D})$ is disjoint from $\overline{D}$
 and let $\alpha\subset f^{-1}(D)\setminus \overline{D}$
be a simple curve that connects $\partial D$ to $\infty$.
(Such a curve always exists since the boundary of the open set 
$f^{-1}(D)\setminus \overline{D}$ in $\widehat{\C}$ is locally connected.) 
The preimages of $\alpha$ split every tract of $g$ (w.r.t. $D$)
in simply-connected unbounded domains on which $g$ restricts as a conformal isomorphism;
we call every such domain a fundamental domain. Since $\cl{g(D)}\cup g(\alpha)\subset D$,
it follows that $I(g)$ is contained in the union of the
fundamental domains.

By \cite[Theorem ]{eremenko_1}, there exists a point $w\in I(g)$.
Hence every fundamental domain must intersect $I(g)$ since
it contains a preimage of $w$.
Let $\widetilde{F}$ be an arbitrary but fixed 
fundamental domain and let $U$ denote the union of 
all fundamental domains other than $\widetilde{F}$. 
Then $U$ and $\widetilde{F}$ are two disjoint 
nonempty open 
sets whose union covers $I(g)$. 
Thus $I(g)$ is disconnected.
\end{proof}

\begin{Definition}
\label{dfn_subh}
 A transcendental entire function $f$ is called \emph{subhyperbolic} if
 \begin{itemize}
\item[$(i)$]$P_{\J}:=P(f)\cap\J(f)$ is finite,
\item[$(ii)$] $P_{\F}:=P(f)\cap\F(f)$ is compact.
\end{itemize}
\end{Definition} 

It follows again from classical results that if $f$ is subhyperbolic, 
then $\F(f)$ is either empty or consists of attracting basins, and by 
condition $(ii)$ there can be only finitely many attracting cycles. 

By condition $(i)$, every singular value of $f$ in $\J(f)$ 
is preperiodic and every critical point in 
$\J(f)$ is strictly preperiodic.
It also follows that $f$ has no Cremer points, 
i.e., irrationally indifferent periodic points that lie in the Julia set.
This means that 
each singular value in $\J(f)$ eventually lands on some repelling orbit 
(for a proof of the above statements see e.g. \cite[Proposition 2.5]{mihaljevic_brandt}). 

A rational map is said to be subhyperbolic if it is
\emph{expanding} with respect to an orbifold metric. 
This characterization 
is equivalent to saying that every critical orbit is finite or converges
to an attracting periodic orbit.
Note that the postsingular set of a subhyperbolic rational function 
does not have to be bounded.  
However, for an \emph{arbitrary} subhyperbolic 
transcendental map it is not possible to 
define an orbifold metric on a neighbourhood of the Julia which is expanded by the map. 
As we will see later, strongly subhyperbolic maps are exactly those maps
for which we can construct the required orbifold metric.
For completeness of this section, let us
recall the definition of a strongly subhyperbolic map.
\begin{Definition}
\label{dfn_tame}
A subhyperbolic transcendental entire map $f$ 
is called \emph{strongly subhyperbolic}, if  
$\J(f)\cap A(f)=\emptyset$ 
and there is a constant $R<\infty$ such that 
$\deg(f,z)<R$ holds for all $z\in\J(f)$.
\end{Definition}

\section{Subhyperbolic maps and dynamically associated orbifolds}
\label{sec associate}
Let $f$ be a strongly subhyperbolic map. 
The first step towards the proof of Theorem \ref{thm_maintheorem} 
is to find hyperbolic orbifolds
\begin{eqnarray*}
\Or_f=(S_f,\nu_f) \quad\text{and}\quad \Ort_f=(\widetilde{S}_f,\tilde{\nu}_f)
\end{eqnarray*}
such that $f:\Ort_f\to\Or_f$ is expanding with respect to 
the hyperbolic metric on $\Or_f$. We will make use of 
the following simple observation.
\begin{Proposition}
\label{expansion}
Let $\Or=(S,\nu)$ and $\Ort=(\widetilde{S},\tilde{\nu})$ 
be hyperbolic orbifolds with metrics $\rho_{\Or}(z)\vert dz\vert$ and 
$\rho_{\Ort}(z)\vert dz\vert$, respectively. 
Let $f:\Ort\rightarrow\Or$ be a covering map and assume that 
the inclusion $\Ort\hookrightarrow\Or$ is holomorphic but not a covering. Then 
\begin{eqnarray*}
\Vert Df(z)\Vert_{\Or} 
:= \vert f^{'}(z)\vert \cdot\frac{\rho_{\Or}(f(z))}{\rho_{\Or}(z)}>1,
\end{eqnarray*}
wherever this is defined.
\end{Proposition}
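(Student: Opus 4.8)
The plan is to apply Theorem~\ref{pick} twice — once to the covering map $f$ and once to the non-covering inclusion $\Ort\hookrightarrow\Or$ — and then combine the two resulting comparisons of hyperbolic densities.

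First I would use that $f:\Ort\to\Or$ is an orbifold covering map. By Theorem~\ref{pick} such a map is a \emph{local isometry} of the orbifold metrics, so in terms of local uniformizing parameters
\[
\vert f'(z)\vert\cdot\rho_{\Or}(f(z)) \;=\; \rho_{\Ort}(z)
\]
at every point $z$ at which both sides make sense — that is, away from the discrete set of ramified points of $\Ort$ and their $f$-images; here the ramification identity $\nu(f(z))=\degr(f,z)\cdot\tilde\nu(z)$ defining an orbifold covering is exactly what makes the singularities of $\rho_{\Ort}$ at ramified points of $\Ort$ match those of $f^{*}\rho_{\Or}$. Dividing by $\rho_{\Or}(z)$ (finite and positive off a discrete set) turns the definition of $\Vert Df(z)\Vert_{\Or}$ into
\[
\Vert Df(z)\Vert_{\Or} \;=\; \frac{\rho_{\Ort}(z)}{\rho_{\Or}(z)},
\]
valid wherever the left-hand side is defined. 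Thus the proposition reduces to the strict pointwise inequality $\rho_{\Ort}(z)>\rho_{\Or}(z)$.

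Second, I would establish this inequality from the hypothesis that $\iota:\Ort\hookrightarrow\Or$ is holomorphic but not a covering. Holomorphy of $\iota$ together with Theorem~\ref{pick} already gives $\rho_{\Ort}(z)\ge\rho_{\Or}(z)$ everywhere (as recorded immediately after that theorem); what remains is to rule out equality. If $\rho_{\Ort}(z_0)=\rho_{\Or}(z_0)$ held at some non-ramified point $z_0$, I would lift $\iota$ to a holomorphic self-map $\hat\iota$ of the unit disk between the universal covering disks of $\Ort$ and $\Or$ (both hyperbolic, using Theorem~\ref{uniform}) and normalise by disk automorphisms so that the points over $z_0$ and $\iota(z_0)$ become the origin; the Schwarz lemma then forces $\hat\iota$ to be a conformal automorphism, hence $\iota$ itself an orbifold covering map by the characterisation of coverings via the universal cover — contradicting the hypothesis. (Alternatively one reads the strict inequality directly off the ``distances are strictly decreased, unless the map is a covering'' clause of Theorem~\ref{pick}.) Hence $\rho_{\Ort}(z)>\rho_{\Or}(z)$ at every non-ramified point, and combining this with the displayed identity gives $\Vert Df(z)\Vert_{\Or}>1$ wherever it is defined.

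The argument is short; the only place where I expect to spend any care is the bookkeeping at the ramified points — making precise both the local-isometry identity for the covering $f$ in the orbifold (rather than surface) sense, and the exact meaning of ``wherever this is defined'' in the statement, since the two orbifold densities have prescribed singularities there. Away from that discrete set everything collapses to the two applications of the Pick lemma described above.
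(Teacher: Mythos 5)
Your proposal is correct and follows essentially the same route as the paper: apply the orbifold Pick theorem to the covering $f$ to get the local-isometry identity $\rho_{\Ort}(z)=\vert f'(z)\vert\,\rho_{\Or}(f(z))$, and to the non-covering holomorphic inclusion to get the strict inequality $\rho_{\Ort}(z)>\rho_{\Or}(z)$, then combine. The extra care you take at ramified points and the Schwarz-lemma justification of strictness are sound but not needed beyond what the paper's two-line argument already uses.
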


\begin{proof}
By Theorem \ref{pick} $f$ is a local isometry, hence 
\begin{eqnarray*}
\rho_\Ort(z) = \rho_\Or(f(z))\cdot\vert f^{'}(z)\vert.
\end{eqnarray*}
Since the inclusion is only holomorphic, it is a strict contraction, and hence 
$\rho_\Ort(z)>\rho_\Or(z)$. So altogether,
\begin{eqnarray*}
\rho_\Ort(z) = \rho_\Or(f(z))\cdot\vert f^{'}(z)\vert >\rho_\Or(z),
\end{eqnarray*}
implying that $\Vert Df(z)\Vert_\Or>1$.
\end{proof}

\subsection{Construction of \texorpdfstring{$\Or_f$}{O_f} and \texorpdfstring{$\Ort_f$}{O_f} for a strongly subhyperbolic map \texorpdfstring{$f$}{f}}
In order to find 
hyperbolic orbifolds $\Or_f$ and $\Ort_f$ 
such that the assumptions of 
Proposition \ref{expansion} are satisfied, 
we will roughly follow the approach of 
Douady and Hubbard for subhyperbolic rational 
maps \cite{douady_hubbard}.
We need to be able to compute sufficiently good 
estimates of the orbifold metrics on $\Or_f$ and $\Ort_f$.
Our requirements are formalized in the following proposition.

\begin{dfn&prop}[Dynamically associated orbifolds]
\label{prop_Or}
Let $f$ be a strongly subhyperbolic function. Then there exist
orbifolds $\Or_f=(S_f,\nu_f)$ and $\Ort_f=(\widetilde{S}_f,\tilde{\nu}_f)$
with the following properties:
\begin{itemize}
\item[$(a)$] The set $B_f$ of ramified points of $\Or_f$ is a finite set 
that contains $P_{\J}$. Furthermore, there exists a point $p\in\Or_f\setminus S(f)$
such that $\nu_f(p)=2\cdot k$ for some $k\geq 1$.
\item[$(b)$] $\J(f)\subset\Or_f$ while $P_{\F}\cap\Or_f=\emptyset$.
\item[$(c)$] $\Or_f$ is a hyperbolic orbifold containing a punctured neighbourhood 
of $\infty$.
\item[$(d)$] $f:\Ort_f\to\Or_f$ is a covering map.
\item[$(e)$] The inclusion $\Ort_f\hookrightarrow \Or_f$ is holomorphic but not a covering map.
Furthermore, if $S_f\neq\C$, then $\overline{\widetilde{S}}_f\subset S_f$.
\end{itemize}
We say that the pair $(\Ort_f,\Or_f)$
of Riemann orbifolds is \emph{dynamically associated to $f$}, if $\Ort_f$ and
$\Or_f$ satisfy $(a)$-$(e)$.
\end{dfn&prop}

\begin{proof}
We start with the construction of $S_f$.  
If $\F(f)=\emptyset$, then define $S_f:=\C$. 
Otherwise, the Fatou set of $f$ consists of attracting basins only 
and, as in the proof of Proposition \ref{prop_dt}, 
we can find a bounded neighbourhood $U$ of the set 
$P_{\F}$ such that $\overline{U}\subset\F(f)$, 
$\C\setminus U$ is connected and $\overline{f(U)}\subset U$. 
We choose a set $U$ with this property and define
\begin{align}
\label{eqn_S_f}
S_f:= \C\backslash\overline{U}.
\end{align}
Note that $S_f$ is connected and that $\J(f)$ is entirely contained in $S_f$.

Now assume that there is a point $p\in P_{\J}\setminus S(f)$, 
such that for every point $z\in\Crit(f)$ with $f^n(z)=p$ there exists 
$k\geq 1$ with $\deg(f^n,z)=2\cdot k$. Then we define the ramification value
of a point $z\in S_f$ to be 
\begin{eqnarray}
\label{ram}
\nu_f(z):=\lcm\lbrace \deg(f^m,w),\text{ where } f^m(w)=z\rbrace.
\end{eqnarray}
If there is no point $p$ with such a property, then pick a repelling 
fixed point $p\notin P(f)$ of $f$. Observe that such a point exists,  
since every map with a bounded set of singular values has infinitely many 
fixed points \cite{langley_zheng}, and since $f$ is subhyperbolic, 
only finitely many of them can be non-repelling. 
Since $p\in\J(f)$, it also belongs to $S_f$ and we 
define the ramification value of every point $z\in S_f\setminus\{ p\}$ to be 
the value defined by equation (\ref{ram}), and 
assign $p$ the ramification value $\nu_f(p)=2$.
Observe that in both cases, there is a point $p\in S_f$ such that 
$\nu_f(p)$ is a multiple of $2$.

Let $\Or_f=(S_f,\nu_f)$. Since $\nu_f(z)>1$ if and only if $z$ belongs to 
$P_{\J}\cup\lbrace p\rbrace$, the set of ramified points 
of $\Or_f$ is finite.  
Furthermore, no critical point $c\in S_f$ belongs to a periodic cycle 
and since we have assumed that the local degree at all 
points in $\J(f)$ is globally bounded by some constant $R$, 
the ramification value $\nu_f(z)$ is necessarily a finite number 
for each $z\in S_f$. Hence $\Or_f=(S_f,\nu_f)$ is a Riemann orbifold 
and statement $(a)$ follows by construction.

Note that part $(b)$ is an immediate consequence of the definition of $S_f$.

Next we will prove that $\Or_f$ is hyperbolic. 
Observe that it is sufficient to restrict to
the case when $p\in P_{\J}$, since every orbifold that 
is holomorphically included in a hyperbolic orbifold has to be hyperbolic 
as well. So we consider the orbifold $\Or_f$ with $B_f=P_{\J}$. 
We will give a proof by contradiction, so let us assume that 
$\Or_f$ is not hyperbolic. 
Since $S_f\subset\C$, it follows that $\Or_f$ must be parabolic. 
By assumption $P_{\J}\neq\emptyset$, so it follows from
\cite[Theorem A4]{mcmullen_2} that $\Or_f$ must be isomorphic to 
$\C$ with signature either $(n)$ or $(2,2)$. 
This implies that $\F(f)=\emptyset$, 
hence all singular values of $f$ belong to $\J(f)$. 

Assume first that $\Or_f$ is isomorphic to 
$\C$ with signature $(n)$. Then $f$ has only one singular value, 
say at $z=0$. It follows from a covering space argument that 
$f(z)=\exp(az+b)$, where $a\in\C^{*}$ and $b\in\C$. 
But then $0$ is an asymptotic  value (and since it is omitted, 
it cannot be a fixed point either), yielding a contradiction. 

So $\Or_f$ must be isomorphic to $\C$ with signature $(2,2)$.
By the previous argument, $f$ has exactly two singular values 
$v_1$ and $v_2$ which are necessarily critical values, and any of 
their preimages is either a critical point of local degree 
two or a regular point. Signature $(2,2)$ also implies that $P(f)=S(f)$, 
meaning that both critical values are either fixed points of $f$ 
or they form a two-cycle. Since $f$ is subhyperbolic but 
not hyperbolic, $v_1$ and $v_2$ are both repelling. 
In particular, $\deg(f,v_1)=\deg(f,v_2)=1$. 
Let $\Or_f^{'}$ be the orbifold which has exactly 
the regular preimages of $v_1$ and $v_2$ as ramified points, 
assigning them the ramification value two. Clearly, 
$\nu_f^{'}(v_1)=\nu_f^{'}(v_2)=2$. Then $f: \Or_f^{'}\rightarrow \Or_f$ 
is a covering map and since $\Or_f$ is parabolic, 
so is $\Or_f^{'}$, which means that $v_1$ and $v_2$ are the only 
ramified points in $\Or_f^{'}$. Hence $\Or_f^{'}=\Or_f$. 
By conformal conjugacy we can assume that $v_1=1$ and $v_2=-1$. 
Then the map $\C\rightarrow\Or_f,\; z\mapsto \cos(z)$ 
is a universal covering map. Since $f:\Or_f\rightarrow\Or_f$ 
is a covering map, it lifts to a conformal $\C$-isomorphism 
$g(z)=az +b$, $a\neq 0$, yielding the relation
\begin{eqnarray*}
f(\cos(z)) = \cos(az+b).
\end{eqnarray*}
By periodicity and symmetry of the cosine map, $a\in\Z$ and $b\in\pi\Z$. 
But this means that $f$ or $-f$ is a Chebyshev polynomial, contradicting the fact that 
$f$ is transcendental. Hence $\Or_f$ is hyperbolic.

By construction, $\Or_f$ is the complement of a, possibly empty, 
compact set, hence $(c)$ follows.

Define
\begin{eqnarray*}
\widetilde{S}_f:=f^{-1}(S_f)
\end{eqnarray*}
and
\begin{eqnarray*}
\tilde{\nu}_f(z):\widetilde{S}_f\to\N,\quad z\mapsto\frac{\nu_f(f(z))}{\deg(f,z)}.
\end{eqnarray*}
By equation (\ref{ram}), $\tilde{\nu}_f(z)$ is a positive integer for every 
$z\in \widetilde{S}_f$ and by the Identity Theorem, the set of 
points $z\in\widetilde{S}_f$ with $\tilde{\nu}_f(z)>1$ is discrete. 
Hence $\Ort_f=(\widetilde{S}_f,\tilde{\nu}_f)$ is a Riemann orbifold.

Since $A(f)\cap S_f=\emptyset$, the map $f:\widetilde{S}_f\rightarrow S_f$ 
is a branched covering. Furthermore, 
\begin{eqnarray*}
\deg(f,z)\cdot\tilde{\nu}_f(z)=\deg(f,z)\cdot\frac{\nu_f(f(z))}{\deg(f,z)}=\nu_f(f(z)),
\end{eqnarray*}
hence $f:\Ort_f\rightarrow\Or_f$ is an orbifold covering map, proving statement $(d)$. 

We will show now that the inclusion $\Ort_f\hookrightarrow\Or_f$ is 
holomorphic but not a covering. 
First note that $\widetilde{S}_f\subset S_f$ by construction of $S_f$.
Moreover, if $S_f\neq\C$, then $\widetilde{S}_f$ is a 
relatively compact subset of $S_f$ (see equation (\ref{eqn_S_f})).
Recall that by $(a)$, there is a point $p\in\Or_f\setminus S(f)$ such that
$\nu_f(p)$ is a multiple of $2$. 
The fact that $p\not\in S(f)$ implies that $p$ has infinitely many
preimages $p_i$ under $f$, and for every such preimage point 
we have $\deg(f,p_i)=1$. 
Moreover, $\nu_f(p_i)=1$ holds for all but 
finitely many of the preimages of $p$, since by $(a)$, 
$\Or_f$ has only finitely many ramified points.
On the other hand, $\tilde{\nu}_f(p_i)=2$, which means that  
$\tilde{\nu}_f(p_i) = 2>\nu_f(p_i)=1$,
hence the inclusion is not a covering map. 

Let $z\in S_f$. Observe that the definition of $\nu_f$ (see equation (\ref{ram})) 
together with the fact that 
for any point $\omega\in\C$ the local degree of an iterate $f^m$ of $f$ is given by  $\deg(f^m,\omega)=\deg(f,\omega)\cdot\deg(f,f(\omega))\cdot \ldots\cdot\deg(f,f^{m-1}(\omega))$ 
implies that $\nu_f(z)\cdot\deg(f,z)$ divides $\nu_f(f(z))$. 
Since $f:\Ort_f\to\Or_f$ is a covering, 
$\tilde{\nu}_f(z)=\nu_f(z)\cdot\deg(f,z)=\nu_f(f(z))$. 
Hence $\nu_f(z)$ divides $\tilde{\nu}_f(z)$ and this proves that the inclusion 
$\Ort_f\hookrightarrow\Or_f$ is a holomorphic map.
\end{proof}

\begin{Remark}
Let $(\Ort_f,\Or_f)$ be dynamically associated to $f$. 
Note that $\Ort_f$ is 
usually not connected. 
However, If $f$ has finitely many tracts over $\infty$, e.g. if $f$ has 
finite order, then the number of components of $\Ort_f$ is finite.
Observe also that it follows from Proposition \ref{prop_Or}$(e)$ 
that the set $B_f$ of ramified points of $\Or_f$ satisfies $f(B_f)\subset B_f$. 

Finally we would like to mention 
that we have proved more than the existence of a hyperbolic 
orbifold $\Or_f$ satisfying the remaining assumptions of 
Proposition \ref{prop_Or}. In fact, we have also shown the following. 
\emph{Let $f$ be a transcendental entire function and let $\Ort$ and $\Or$ be 
any two orbifolds such that $f:\Ort\to\Or$ is a covering map. Then $\Or$ (and 
hence $\Ort$) is hyperbolic.}
\end{Remark}

\begin{Corollary}
\label{cor_(f)}
Let $f$ and $\Ort_f$ be as in Proposition \ref{prop_Or}.  
Then there is a constant $K>1$ and an infinite sequence of points $z_i$ 
for which $\tilde{\nu}_f(z_i)$ is a multiple of $2$,
such that 
$\vert z_i\vert<\vert z_{i+1}\vert\leq K\vert z_i\vert$ holds for all $i$.
\end{Corollary}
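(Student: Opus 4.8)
The plan is to extract the required sequence from a single fibre $f^{-1}(p)$, where $p$ is the point furnished by Proposition~\ref{prop_Or}$(a)$, so that $\nu_f(p)=2k$ with $k\ge 1$. First I would check that every preimage of $p$ is an admissible candidate. By the construction of $\Or_f$ in the proof of Proposition~\ref{prop_Or}, the point $p$ either lies in $P_\J$ or is a repelling fixed point of $f$; in either case $p\in\J(f)$, so $f^{-1}(p)\subset\J(f)\subset S_f$, and since $p\in S_f$ we get $f^{-1}(p)\subset f^{-1}(S_f)=\widetilde{S}_f$. Moreover $p\notin S(f)$, hence (as already noted in the proof of Proposition~\ref{prop_Or}$(e)$) the fibre $f^{-1}(p)$ is infinite, $p$ is not a critical value, so $\deg(f,z)=1$ and $\tilde{\nu}_f(z)=\nu_f(f(z))/\deg(f,z)=\nu_f(p)=2k$ for every $z\in f^{-1}(p)$. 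Thus it suffices to find an infinite subsequence of $f^{-1}(p)$ with strictly increasing moduli and uniformly bounded consecutive ratios.

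Next I would pass to the tract structure. Since $f$ is subhyperbolic, $S(f)\subset P(f)=P_\J\cup P_\F$ is compact, so I may fix a bounded Jordan domain $D\supset S(f)$ with $p\notin\overline D$, choose $\zeta_0\in D$, and set $U:=\C\setminus\overline D$. The complement of $U$ in $\widehat{\C}$ is the connected set $\overline D$ together with the single point $\infty$, so $U$ is conformally a once-punctured disc; in particular $U$ is hyperbolic with $\pi_1(U)\cong\Z$, and $U\cap S(f)=\emptyset$, so $f\colon f^{-1}(U)\to U$ is a covering. Let $T$ be a tract, i.e.\ a component of $f^{-1}(U)$ (these exist, as $f^{-1}(p)\subset f^{-1}(U)\ne\emptyset$); then $T$ is simply connected and unbounded, and $f\colon T\to U$ is the universal covering of $U$. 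Fixing a generator $\sigma$ of $\mathrm{Deck}(T\to U)$ and a base point $q_0\in f^{-1}(p)\cap T$, the fibre is $f^{-1}(p)\cap T=\{q_m:=\sigma^m(q_0):m\in\Z\}$; since each $\sigma^m$ is a conformal automorphism, hence a $\rho_T$-isometry (where $\rho_T$ is the hyperbolic metric of $T$), the distance $d_T(q_m,q_{m+1})=d_T(q_0,\sigma(q_0))=:C_1<\infty$ is independent of $m$. Also $|q_m|\to\infty$ as $m\to+\infty$: a finite accumulation point $z^{*}$ would satisfy $f(z^{*})=p$, but $z^{*}\in\overline T$ forces either $z^{*}=q_j$ for some $j$ (impossible, the $q_m$ being distinct) or $z^{*}\in\partial T\subset f^{-1}(\partial D)$, whence $f(z^{*})\in\partial D\not\ni p$.

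The heart of the argument is a lower bound for $\rho_T$. Since $T$ is \emph{simply connected}, Koebe's theorem gives $\rho_T(z)\ge c/\dist(z,\partial T)$ for an absolute $c>0$; and since $\zeta_0\in D$ lies outside $\overline T$, the segment $[\zeta_0,z]$ meets $\partial T$ for every $z\in T$, so $\dist(z,\partial T)\le|z-\zeta_0|$, giving $\rho_T(z)\ge c/|z-\zeta_0|$ throughout $T$. Integrating this along the $\rho_T$-geodesic from $q_m$ to $q_{m+1}$ yields
\[
c\,\bigl|\log|q_{m+1}-\zeta_0|-\log|q_m-\zeta_0|\bigr|\le d_T(q_m,q_{m+1})=C_1,
\]
so $|q_{m+1}-\zeta_0|/|q_m-\zeta_0|\le e^{C_1/c}$ for all $m$; combined with $|q_m|\to\infty$ this gives a constant $K_0>1$ with $|q_{m+1}|/|q_m|\le K_0$ for all large $m$. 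From the tail of $(q_m)$ --- which has $|q_m|\to\infty$ and consecutive ratios bounded by $K_0$ --- one extracts, by repeatedly passing to the first later index of strictly larger modulus, an infinite subsequence $(z_i)$ with $|z_i|<|z_{i+1}|\le K_0|z_i|$. By the first paragraph each $z_i\in\widetilde{S}_f$ satisfies $\tilde{\nu}_f(z_i)=2k$, so $K:=K_0$ and $(z_i)$ are as required.

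The step I expect to be the real obstacle is the metric estimate of the third paragraph: it is essential to use that a \emph{tract} is simply connected, so that Koebe's estimate $\rho_T(z)\asymp 1/\dist(z,\partial T)$ applies and produces a bound of order $1/|z|$. Viewing $T$ instead merely as a subdomain of the doubly connected set $U$ would only give $\rho_T(z)\gtrsim 1/(|z|\log|z|)$, hence the far weaker conclusion $|q_{m+1}|\le|q_m|^{\Oo(1)}$, which does not suffice as input for Theorem~\ref{thm_main2}. The remaining ingredients --- the identification of $f|_T$ with a universal covering, the constancy of $d_T(q_m,q_{m+1})$ along the $\Z$-orbit, and the final extraction --- are routine.
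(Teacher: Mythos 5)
Your proof is correct and follows essentially the same route as the paper's: the paper likewise fixes the even-ramified point $p$ from Proposition \ref{prop_Or}$(a)$, takes its preimages lying in a single tract, and invokes the standard Koebe-type estimate for the hyperbolic metric of a simply connected domain (citing \cite[Corollary A.8]{milnor} and the proof of \cite[Lemma 5.1]{rempe_5} for exactly the computation you spell out). The one inaccuracy is your justification of $\dist(z,\partial T)\le|z-\zeta_0|$: a tract $T$ need not be disjoint from $D$ for a general strongly subhyperbolic map (that separation is only guaranteed in the disjoint type setting), so you should instead take $\zeta_0$ to be any point of the nonempty set $\partial T\subset f^{-1}(\partial D)$; the rest of the argument is unaffected.
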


\begin{proof} 
Let $p\in\Or_f$ be a point such that 
$\nu_f(p)$ is a multiple of $2$ and 
let $\gamma$ be a Jordan curve in $\C$ such that the bounded component 
of $\C\setminus\gamma$ contains $S(f)$ but not $p$. 
The components of the preimage of the unbounded component of $\C\setminus\gamma$
are tracts of $f$ and every such tract contains infinitely many preimages
of $p$. Let $z_i$ denote the preimages of $p$ lying in one such 
(arbitrary but fixed) tract.
Using standard estimates on the hyperbolic metric in a 
simply-connected domain \cite[Corollary A.8]{milnor}, one 
easily deduces that there is a constant $K>1$ such that 
$\vert z_i\vert <\vert z_{i+1}\vert\leq K\vert z_i\vert$ holds 
for infinitely many $i$.
(For details, see e.g. \cite[Proof of Lemma 5.1]{rempe_5} 
or \cite[Proof of Proposition 3.4]{mihaljevic_brandt}.) 
However, since all but finitely 
many $z_i$ are regular points of $f$, it follows that $\tilde{\nu}_f(z_i)=\nu_f(p)$
and this is, by $(a)$, a multiple of $2$.
\end{proof}

\begin{notations}
 For a pair $(\Ort_f,\Or_f)$ of orbifolds dynamically associated 
to $f$, we denote by $\tilde{\rho}_f$ and $\rho_f$ 
the densities of the hyperbolic metrics of $\Ort_f$ and $\Or_f$, respectively.
\end{notations}

We conclude this section with 
the following simple observation which justifies our
restriction to strongly subhyperbolic maps.

\begin{Proposition}
Let $f$ be a subhyperbolic map for which there is a pair of 
dynamically associated orbifolds. 
Then $f$ is strongly subhyperbolic.
\end{Proposition}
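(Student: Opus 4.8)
The statement is a converse to Definition and Proposition \ref{prop_Or}: it asks us to show that the \emph{only} subhyperbolic maps admitting a dynamically associated pair of orbifolds are the strongly subhyperbolic ones. The two defining conditions of strong subhyperbolicity that go beyond subhyperbolicity are (i) $\J(f)\cap A(f)=\emptyset$ and (ii) the local degrees $\deg(f,z)$ are uniformly bounded for $z\in\J(f)$. So the plan is to assume $(\Ort_f,\Or_f)$ is dynamically associated to the subhyperbolic map $f$ and derive both (i) and (ii) from properties $(a)$--$(e)$ of Proposition \ref{prop_Or}.

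For condition (i): by $(b)$ we have $\J(f)\subset\Or_f$, and by $(d)$ the map $f:\Ort_f\to\Or_f$ is a (branched) covering map onto $S_f$. If some asymptotic value $v\in A(f)$ were to lie in $\J(f)\subset S_f$, I would argue that $f:\widetilde{S}_f\to S_f$ cannot be a branched covering in a neighbourhood of $v$: an asymptotic value is, by definition, a point over which $f$ fails to be a covering (there is an asymptotic path along which $f$ has no well-defined preimage germ), so $f^{-1}$ of a small disk around $v$ has a component on which $f$ is not proper, contradicting that $f$ restricted to $\widetilde{S}_f = f^{-1}(S_f)$ is a branched (in particular proper-on-components) covering onto $S_f$. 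This is exactly the reasoning already used inside the proof of Proposition \ref{prop_Or} (``Since $A(f)\cap S_f=\emptyset$, the map $f:\widetilde S_f\to S_f$ is a branched covering''), run in reverse; the small point to check is that $v\in\J(f)$ genuinely forces $v$ into the interior of $S_f$ rather than its boundary, which follows since $S_f$ is either $\C$ or $\C\setminus\overline U$ with $\overline U\subset\F(f)$ (equation \eqref{eqn_S_f}).

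For condition (ii): here I would use hyperbolicity of $\Or_f$ from $(c)$, i.e.\ that $\Or_f$ is a genuine \emph{Riemann} orbifold with a well-defined ramification map $\nu_f:S_f\to\N_{\geq1}$ taking only finite values. Because $f:\Ort_f\to\Or_f$ is an orbifold covering by $(d)$, the orbifold-covering identity $\nu_f(f(z))=\deg(f,z)\cdot\tilde\nu_f(z)$ holds; in particular $\deg(f,z)$ divides $\nu_f(f(z))$ for every $z$. Combined with the multiplicativity of local degree along orbits, this shows $\deg(f^m,w)$ divides $\nu_f(z)$ whenever $f^m(w)=z$, so that $\nu_f(z)$ on $\J(f)$ is at least the supremum of all such iterated local degrees landing at $z$. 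Since $\nu_f$ is finite-valued everywhere and, by $(a)$, is equal to $1$ off the finite ramification set $B_f\supset P_\J$, I would conclude: at points $z\in\J(f)\setminus B_f$ we get $\deg(f,z)=1$, and at the finitely many points of $B_f$ the local degrees of $f$ (and of all its backward iterates landing there) are bounded by $\max_{z\in B_f}\nu_f(z)<\infty$. Putting these together yields a single finite bound $R$ on $\deg(f,z)$ over all $z\in\J(f)$, which is condition (ii).

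\textbf{Main obstacle.} The delicate point is the asymptotic-value step: one must be careful that ``$f:\Ort_f\to\Or_f$ is an orbifold covering'' (condition $(d)$) really does force $f:\widetilde S_f\to S_f$ to be a branched covering of the underlying \emph{surfaces}, since the Remark after the definition of orbifold covering warns that a covering of orbifolds need not be a covering of the underlying surfaces. However, ``branched covering of surfaces'' is precisely part of the \emph{definition} of an orbifold covering map in this paper, so $(d)$ gives exactly what is needed; the real work is just checking that a neighbourhood of an asymptotic value $v\in\J(f)$ inside $\Or_f$ has a non-proper preimage component, contradicting properness of $f$ on components of $f^{-1}$ of small sets. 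This is standard but should be spelled out; everything else is bookkeeping with the divisibility relations of Proposition \ref{prop_Or}.
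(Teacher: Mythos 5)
Your overall strategy is the same as the paper's: asymptotic values in $\J(f)$ are ruled out because they destroy properness of $f$ on preimage components, contradicting property $(d)$, and unbounded local degree is ruled out by the divisibility relation $\nu_f(f(z))=\deg(f,z)\cdot\tilde{\nu}_f(z)$ together with finiteness of the ramification values. Your treatment of the asymptotic-value case is correct, including the observation that an orbifold covering is by definition a branched covering of the underlying surfaces, so the Remark you worried about is not an obstruction.

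There is, however, a false intermediate claim in your argument for the bounded-degree condition: you assert that ``at points $z\in\J(f)\setminus B_f$ we get $\deg(f,z)=1$.'' This cannot be right: $B_f$ is finite, while $\J(f)$ typically contains infinitely many critical points (for $f(z)=\pi\sinh z$ every point $i\pi/2+ik\pi$ is a critical point lying in $\J(f)=\C$, whereas $B_f$ has at most four elements). What membership in $B_f$ controls is the critical \emph{value}, not the critical point: if $z\in\J(f)$ has $\deg(f,z)>1$, then $f(z)$ is a critical value, hence $f(z)\in C(f)\cap\J(f)\subset P_{\J}\subset B_f$, and the divisibility relation you already quoted gives $\deg(f,z)\leq\nu_f(f(z))\leq\max_{w\in B_f}\nu_f(w)<\infty$. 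With this one-line correction your argument closes and coincides with the paper's, which phrases the same point contrapositively: unbounded local degree over a critical value $w\in\J(f)$ would force $\nu_f(w)$ to be infinite, i.e.\ $w$ would have to be a puncture of $\Or_f$, contradicting $\J(f)\subset S_f$.
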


\begin{proof}
If $a$ is an asymptotic value of $f$, 
then for any compact set $K\subset\C$ containing $a$, there 
exists a component of $f^{-1}(K)$ which is not compact. Hence 
there is no domain $U\ni a$ such that $f: f^{-1}(U)\to U$ is a 
proper map. Hence, if an asymptotic value of $f$ 
belongs to $\J(f)$, then there is no domain $U\supset\J(f)$ such 
that $f:f^{-1}(U)\to U$ is a (branched) covering map.  

Assume now that $f$ has a critical value $w\in\J(f)$, such that for every $n\in\N$ 
there exists a point $z_n$ with $f(z_n)=w$ and $\deg(f,z_n)\geq n$. 
If there was a pair $(\Ort_f,\Or_f)$ of dynamically associated orbifolds,  
then Proposition \ref{prop_Or}$(d)$ would imply that $w$ is a puncture of $\Or_f$,
contradicting the fact that $\J(f)\subset S_f$. 

\end{proof}

\section{Uniform expansion}
\label{sec uniform}
Let $f$ be a strongly subhyperbolic map and let 
$(\Ort_f,\Or_f)$ be dynamically associated to $f$.
By Proposition \ref{expansion}, 
\begin{eqnarray*}
\Vert Df(z)\Vert_{\Or_f} = \vert f^{'}(z)\vert \cdot\frac{\rho_f(f(z))}{\rho_f(z)}>1
\end{eqnarray*}
wherever defined, so in particular for all $z\in\Ort_f$. 
\begin{Remark}
If $w\in\Ort_f$ is a point such that $\nu_f(f(w))>1$, then 
it follows by Proposition \ref{prop_Or}$(d)$,$(e)$
that $\nu_f(w)\cdot\deg(f,w)$ divides $\nu_f(f(w))$. In this case we define
\begin{eqnarray*}
\Vert Df(w)\Vert_{\Or_f}:=\lim_{z\rightarrow w} \Vert Df(z)\Vert_{\Or_f},
\end{eqnarray*}
and so if $\nu_f(w)\cdot\deg(f,w)=\nu_f(f(w))$, 
then $\Vert Df(w)\Vert_{\Or_f}$ is a finite number, 
while otherwise $\Vert Df(w)\Vert_{\Or_f}=\infty$. 
\end{Remark}

Our goal is to show that the expansion of $f$ is uniform.
\begin{Theorem}[Uniform expansion]
\label{thm_uniform_expansion}
Let $f$ be strongly subhyperbolic and let $(\Ort_f,\Or_f)$ be 
dynamically associated to $f$. Then 
there is a constant $E>1$ such that 
\begin{eqnarray*}
\Vert Df(z)\Vert_{\Or_f} \geq E
\end{eqnarray*}
for all $z\in\Ort_f$.
\end{Theorem}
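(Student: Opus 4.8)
The plan is to combine the pointwise expansion estimate of Proposition~\ref{expansion} with a compactness argument near the finite part of the plane and a separate asymptotic estimate near $\infty$, the latter being exactly where Theorem~\ref{thm_main2} enters. Write $\Vert Df(z)\Vert_{\Or_f} = \vert f'(z)\vert \cdot \rho_f(f(z))/\rho_f(z)$. By Proposition~\ref{expansion} this quantity is $>1$ everywhere on $\Ort_f$ where it is defined, and by the Remark preceding the theorem it extends (possibly with value $+\infty$) to the exceptional points where $\nu_f(f(w))>1$; in particular the function $z\mapsto \Vert Df(z)\Vert_{\Or_f}$ is continuous on $\Ort_f$ with values in $(1,+\infty]$. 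So the only obstruction to a uniform lower bound $E>1$ is that the infimum could approach $1$ along a sequence $z_n$ escaping to $\infty$ (an interior sequence in a compact region would contradict continuity and $\Vert Df\Vert>1$, using also that $\Ort_f$ is relatively compact in $S_f$ when $S_f\ne\C$, and that $\Ort_f$ has a punctured neighbourhood of $\infty$ in the other case).

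First I would dispose of the compact part. If $S_f\ne\C$ then $\overline{\widetilde S}_f\subset S_f$ by Proposition~\ref{prop_Or}$(e)$, so $\Ort_f$ is relatively compact and continuity of $\Vert Df\Vert_{\Or_f}$ on the compact set $\overline{\widetilde S}_f$, together with the strict inequality $\Vert Df\Vert_{\Or_f}>1$, gives the bound directly. If $S_f=\C$ then $\widetilde S_f = f^{-1}(\C)=\C$, and I only need to control the infimum over $\{\vert z\vert\ge r\}$ for large $r$; on any compact annulus $\{r_0\le \vert z\vert\le r\}$ the same continuity-plus-strict-inequality argument applies. So in all cases the theorem reduces to showing $\liminf_{z\to\infty,\, z\in\Ort_f}\Vert Df(z)\Vert_{\Or_f}>1$.

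For the behaviour near $\infty$, I would estimate the two metric densities separately. For $\rho_f(f(z))$ in the numerator: as $z\to\infty$ in a tract, $f(z)\to\infty$, and near $\infty$ the orbifold $\Or_f$ contains a punctured neighbourhood of $\infty$ (Proposition~\ref{prop_Or}$(c)$), so the standard punctured-disk estimate gives $\rho_f(w)\asymp 1/(\vert w\vert \log\vert w\vert)$ as $w\to\infty$; combined with a lower bound $\vert f(z)\vert \ge$ (something growing) and the logarithmic-derivative growth $\vert f'(z)\vert$ one gets a lower bound for $\vert f'(z)\vert\,\rho_f(f(z))$ of order $\log\vert f(z)\vert/(\dots)$ — here I would import the tract/expansion estimates in the style of \cite[Proof of Lemma 5.1]{rempe_5}, exactly as used in Corollary~\ref{cor_(f)}. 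For $\rho_f(z)$ in the denominator: this is the crux, and it is handled by Theorem~\ref{thm_main2}. By Corollary~\ref{cor_(f)} there is an infinite sequence $z_i$ of ramified points of $\Ort_f$ of even ramification value with $\vert z_i\vert<\vert z_{i+1}\vert\le K\vert z_i\vert$; passing through the covering $f$ these project, after one more application, to a configuration of ramified points of $\Or_f$ to which Theorem~\ref{thm_main2} applies, yielding $\rho_f(z)\ge \Oo(1/\vert z\vert)$ as $z\to\infty$. Hence $\rho_f(z)$ is not much larger than $1/\vert z\vert$, while $\rho_f(f(z))$ carries an extra $\log\vert f(z)\vert$ factor that comes from the puncture at $\infty$ — and that extra logarithm, multiplied against the growth of $\vert f'\vert$, forces the ratio to stay bounded away from $1$.

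The main obstacle is the last step: one must check that the hypotheses of Theorem~\ref{thm_main2} genuinely apply to $\Or_f$ near $\infty$ — i.e.\ that $\Or_f$ really does carry a $K$-separated sequence of order-$2$ (or at least even-order, which dominates the estimate in the same direction) ramified points tending to $\infty$, not merely that $\Ort_f$ does. This is where the construction in Proposition~\ref{prop_Or} is used essentially: $B_f$ contains the point $p$ with $\nu_f(p)$ a multiple of $2$, and Corollary~\ref{cor_(f)} produces the requisite $K$-separated sequence of preimages, which are ramified in $\Or_f$ as soon as they coincide with $p$-preimages carrying ramification — so applying Theorem~\ref{thm_main2} to the sub-orbifold of $\Or_f$ with just these even-order ramified points (which is holomorphically included in $\Or_f$, hence has larger density by the Pick estimate following Theorem~\ref{pick}) gives the needed lower bound on $\rho_f$. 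Once that is in place, assembling $\vert f'(z)\vert\,\rho_f(f(z))/\rho_f(z)\ge E>1$ for $\vert z\vert$ large is a routine comparison of the three growth rates, and combining with the compact-part bound finishes the proof.
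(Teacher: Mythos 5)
There is a genuine gap, and it sits exactly where you flag your ``main obstacle'': you are applying Theorem~\ref{thm_main2} to the wrong orbifold and in the wrong direction. By Proposition~\ref{prop_Or}$(a)$ the set $B_f$ of ramified points of $\Or_f$ is \emph{finite}, so $\Or_f$ does not carry a $K$-separated sequence of ramified points tending to $\infty$ and the hypotheses of Theorem~\ref{thm_main2} can never be verified for $\Or_f$; your hope that the ramified points $z_i$ of $\Ort_f$ ``project under $f$'' to such a configuration in $\Or_f$ fails concretely because the $z_i$ are by construction the preimages of the single point $p$, so they all map to $p$. Moreover, even if one had $\rho_f(z)\geq c/\vert z\vert$, this is a lower bound on the \emph{denominator} of $\Vert Df(z)\Vert_{\Or_f}=\vert f'(z)\vert\rho_f(f(z))/\rho_f(z)$, which pushes the expansion factor down, not up. The inference ``$\rho_f(z)\geq \Oo(1/\vert z\vert)$, hence $\rho_f(z)$ is not much larger than $1/\vert z\vert$'' reverses the inequality.

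The correct division of labour is the opposite one. Since $f:\Ort_f\to\Or_f$ is an orbifold covering, it is a local isometry, so the numerator satisfies $\vert f'(z)\vert\,\rho_f(f(z))=\tilde{\rho}_f(z)$, and it is to $\Ort_f$ that Theorem~\ref{thm_main2} applies: Corollary~\ref{cor_(f)} provides the $K$-separated sequence of points of $\Ort_f$ with even ramification value, the orbifold with exactly those points marked of order $2$ is holomorphically included in $\Ort_f$, and Theorem~\ref{pick} then gives $\tilde{\rho}_f(z)\geq c/\vert z\vert$ (this is Corollary~\ref{estimate_rhotilde}). For the denominator one needs an \emph{upper} bound, and this comes precisely from the finiteness of $B_f$: $\Or_f$ contains a ramification-free punctured neighbourhood $C$ of $\infty$, so $\rho_f(z)\leq\rho_C(z)=\Oo\bigl(1/(\vert z\vert\log\vert z\vert)\bigr)$. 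Combining, $\tilde{\rho}_f(z)/\rho_f(z)\geq\Oo(\log\vert z\vert)\to\infty$, which together with your (essentially correct) compactness reduction finishes the proof. Finally, your fallback plan of bounding the numerator directly by the logarithmic-derivative growth of $f$ in a tract cannot replace this: it only controls points $z$ with $\vert f(z)\vert$ large, whereas $\Ort_f=f^{-1}(S_f)$ contains points near $\infty$ whose images lie in a bounded part of $S_f$ and where $\vert f'\vert$ may be small; handling these points is exactly what Theorem~\ref{thm_main2} is for.
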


The remainder of Section \ref{sec uniform} is devoted to the proof of Theorem \ref{thm_uniform_expansion}.

\subsection{Continuity of orbifold metrics}
We want to show the following continuity statement: 
Let $\Or$ be a Riemann orbifold, let $p$ be a regular and 
$q$ a ramified point of $\Or$. Then the value of the density map $\rho_{\Or}$
of the orbifold metric at $p$ depends
continuously on $q$, i.e., if we perturb the point $q$ slightly, then the density of 
the corresponding orbifold metric at the point $p$ will also undergo only 
a small change. 

This statement is surely not new but we were not able to locate a reference.
Hence we include a proof for completeness. We will restrict to orbifolds 
whose underlying surface is a subset of the sphere; the proof in 
the general case, where the underlying surface of $\Or$ is an 
arbitrary Riemann surface, can be derived using exactly the
same arguments, with the additional step of taking charts. 

So let $S\subset\widehat{\C}$ and let $\Or=(S,\nu)$.  
Denote by $B$ the set of ramified points of $\Or$. 
Let $n>1$ be an arbitrary but fixed integer. 
For a point $q\in S\backslash B$ we define a new orbifold  $\Or_q=(S,\nu_q)$, where
\begin{align*}
\nu_q(z)=\begin{cases} \nu(z)&\text{ if } z\neq q,\\ n &\text{ if } z=q.
         \end{cases}
\end{align*}
Furthermore, we assume that every such orbifold has a universal cover
(hence we exclude the case when $S$ is a sphere and the set $B$ is either 
empty or consists of only one point with ramification value $m\neq n$). 
Note that any two such orbifolds $\Or_q$ and $\Or_{\tilde{q}}$ 
have the same signature and hence the same uniformized universal cover. 
Let us denote the density of the orbifold metric on $\Or_q$ by $\rho_q$.

For a point $p\in S\setminus B$ we define the map
\begin{align*}
M_p: S\setminus (B\cup\{ p\})\rightarrow (0,\infty],\quad q\mapsto\rho_q(p).
\end{align*}

\begin{Theorem}[Continuity of orbifold metrics]
\label{thm_continuity}
Let $p\in S\setminus B$ be arbitrary but fixed. 
Then the map $M_p$ is continuous at every point in $S\setminus(B\cup\{ p\})$.
\end{Theorem}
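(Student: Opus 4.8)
The plan is to deduce continuity of $M_p$ from continuity of the universal covering maps with respect to the position of the extra ramified point $q$. The key idea: since all orbifolds $\Or_q$ share the same signature, they all have the same uniformized universal cover $C\in\{\widehat{\C},\C,\D\}$, and the density $\rho_q(p)$ can be read off from a universal covering map $\pi_q:C\to\Or_q$ via the formula $\rho_q(p) = \rho_C(w)/|\pi_q'(w)|$ for any $w\in\pi_q^{-1}(p)$, where $\rho_C$ is the standard constant-curvature metric on $C$. So it suffices to show that one can choose the covering maps $\pi_q$ to depend continuously (in a suitable sense, e.g.\ locally uniformly near a chosen preimage of $p$) on the parameter $q$.

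First I would fix $q_0\in S\setminus(B\cup\{p\})$ and a point $w_0\in C$ with $\pi_{q_0}(w_0)=p$, normalized so that $w_0$ and, say, $\pi_{q_0}'(w_0)$ are fixed; this rigidifies the covering (in the hyperbolic case $C=\D$ one uses the three-real-parameter group of Möbius automorphisms; in the parabolic case the affine group). Then I would argue that for $q$ near $q_0$ there is a covering map $\pi_q:C\to\Or_q$ with $\pi_q(w_0)=p$ and $\pi_q\to\pi_{q_0}$ locally uniformly on $C$ as $q\to q_0$. One clean way to get this: the $\Or_q$ form a "holomorphic motion"-type family — away from the marked points the orbifold structure is literally unchanged, and near each marked point it changes only by the location of $q$, which varies holomorphically (indeed smoothly) — so the complex structures vary continuously, and by the continuous dependence of solutions of the uniformization/Beltrami equation (or, more elementarily, by a normal-families argument applied to the covers directly), the normalized covering maps vary continuously. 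I would likely phrase it via normal families: the family $\{\pi_q\}_{q\text{ near }q_0}$, normalized at $w_0$, is precompact (the images omit enough points, or one uses that the hyperbolic metrics are locally uniformly bounded below by a fixed comparison orbifold via Theorem \ref{pick}), every convergent subsequence has a limit that is again a covering map of a limiting orbifold, and by the uniqueness in Theorem \ref{uniform} the limit must be $\pi_{q_0}$; hence the whole family converges.

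Given locally uniform convergence $\pi_q\to\pi_{q_0}$, the derivatives converge too (Cauchy estimates, or Weierstrass), so $|\pi_q'(w_0)|\to|\pi_{q_0}'(w_0)|$, which is nonzero since $w_0$ is not a branch point (as $p\notin B\cup\{q\}$). Therefore
\[
M_p(q)=\rho_q(p)=\frac{\rho_C(w_0)}{|\pi_q'(w_0)|}\longrightarrow\frac{\rho_C(w_0)}{|\pi_{q_0}'(w_0)|}=M_p(q_0),
\]
which is exactly continuity at $q_0$. The case $C=\widehat{\C}$ (elliptic orbifolds) is excluded by our standing assumption that all $\Or_q$ have a universal cover, and in the remaining cases $\rho_C$ is a fixed smooth function, so no further care is needed there.

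\textbf{The main obstacle} I expect is making the convergence $\pi_q\to\pi_{q_0}$ rigorous and uniform in the right sense — in particular justifying precompactness of the normalized family of covering maps and identifying every subsequential limit as a covering of the limit orbifold (one must check the limit map is nonconstant and that it does not "lose" or "gain" branching over the moving point $q$). Everything after that is a short computation. An alternative route that sidesteps covering-map normalizations entirely is to work with the density $\rho_q$ directly as the solution of the curvature equation $\Delta\log\rho_q = \kappa\,\rho_q^2$ on $S\setminus B_q$ with the prescribed conical/cusp singularities, and invoke continuous dependence of this PDE's solution on the singular data; but that requires quoting a less elementary analytic fact, so I would prefer the covering-space argument sketched above, which uses only Theorems \ref{uniform} and \ref{pick} already available to us.
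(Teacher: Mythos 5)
Your overall target is the right one --- continuity of $M_p$ does reduce to continuous dependence of the normalized universal covering maps $\pi_q:C\to\Or_q$ on $q$, and your final computation $\rho_q(p)=\rho_C(w)/\vert\pi_q'(w)\vert$ is exactly how the paper finishes. But the step you flag as ``the main obstacle'' is not a technicality to be filled in afterwards; it is the entire content of the theorem, and the normal-families mechanism you propose for it does not go through in the cases the paper actually needs. In the application (Theorem \ref{thm_orb_main}) the underlying surface is $S=\C$ and the orbifolds have signature $(2,2,2)$. The covering maps $\pi_q:\D\to\C$ are surjective, so they omit no finite value and Montel gives no normality; and the only fixed comparison orbifold lying holomorphically below all the $\Or_q$ simultaneously is the one obtained by deleting the moving marked point, namely $\C$ with signature $(2,2)$, which is \emph{parabolic} --- so Theorem \ref{pick} yields no positive lower bound on $\rho_q$ and hence no upper bound on $\vert\pi_q'\vert$. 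Even granting precompactness, a locally uniform limit of branched covering maps need not be a covering map: you must rule out degeneration to a constant and loss or coalescence of branching over the moving point, and the uniqueness in Theorem \ref{uniform} only identifies the limit \emph{after} one knows it is a normalized universal covering of $\Or_{q_0}$. None of this is supplied in your sketch.

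The paper avoids the compactness question altogether by \emph{constructing} the nearby covering maps from the fixed one. Fix a small Jordan domain $D\ni q^{*}$ with $\overline{D}\cap(B\cup\{p\})=\emptyset$ and build an explicit $h_q$-quasiconformal homeomorphism $\varphi_q:\Or_{q^{*}}\to\Or_q$ that is the identity off $D$ and moves $q^{*}$ to $q$ inside $D$, with dilatation $h_q=\e^{d_D(q^{*},q)}\to1$ as $q\to q^{*}$. Normalizing the covers at a base point $a\notin\overline{D}\cup B\cup\{p\}$, the map $\varphi_q$ has a unique lift $\tilde{\varphi}_q:\D\to\D$ fixing $0$; this lift is again $h_q$-quasiconformal, hence converges to the identity as $h_q\to 1$, and it is \emph{conformal} on $\D\setminus\pi_{q^{*}}^{-1}(D)$, so the convergence is $C^1$ near every preimage of $p$. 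Writing $\pi_q=\varphi_q\circ\pi_{q^{*}}\circ\tilde{\varphi}_q^{-1}$ and using $\varphi_q'(p)=1$ then gives $\vert\pi_q'(p_q)\vert\to\vert\pi_{q^{*}}'(p_{q^{*}})\vert$ directly, which is your last display. If you wish to keep the normal-families framing, you would at minimum have to prove an orbifold analogue of Hejhal's theorem on convergence of covering maps of variable regions; the quasiconformal interpolation is the cheaper route and is what the paper does.
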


\begin{proof}
Let $q^{*}\in S\setminus(B\cup\{ p\})$ be an arbitrary but fixed point. 
We want to show that $M_p$ is continuous at $q^{*}$. 

Pick a sufficiently small Jordan domain $D\ni q^{*}$ such 
that $\overline{D}\cap \left(B\cup\{ p\}\right)=\emptyset$.
Let $a$ be a point in $S\setminus (\overline{D}\cup B\cup\{p\})$. 
By conformal conjugacy, we can assume that $a=0$.
 
For two points $z_1,z_2\in D$ let us denote by $d_D(z_1,z_2)$ the distance between 
$z_1$ and $z_2$ measured in the hyperbolic metric of $D$. 
For a point $q\in D$ consider the unique Riemann map $H_q: D\to\H$ which maps 
$q^{*}\mapsto i$ and $q\mapsto h_q i$, where $h_q:=\e^{d_D(q^{*},q)}$. 
Let $L_q:\H\to\H$, $(x+ iy)\mapsto x + h_q y i$. Then $L_q$ is a $h_q$-quasiconformal 
self-map of $\H$. Define 
\begin{align*}
\varphi_q:D\to D,\quad z\mapsto H_q\circ L_q\circ H_q^{-1}(z).
\end{align*}
 
It is easy to see that $\varphi_q$ extends continuously to the complement 
of $D$ as the identity map and that the extended map, which we will also 
denote by $\varphi_q$, is a $h_q$-quasiconformal map 
(see e.g. \cite[Lemma 5.2.3]{graczyk_swiatek}).
Observe that $\varphi_q\to\varphi_{q^{*}}\equiv \id$ as $q\to q^{*}$.

Let $C$ be the uniformized universal 
covering surface of $\Or_{q^{*}}$ and $\Or_q$ 
and let $\pi_{q^{*}}:C\to \Or_{q^{*}}$ and $\pi_q:C\to \Or_q$ 
be universal covering maps, 
both normalized such that $\pi_{q^{*}}(0)=\pi_q(0)=0$ 
and $\pi_{q^{*}}^{'}(0)=\pi_q^{'}(0)$. 
Considered as a map between orbifolds, 
$\varphi_q:\Or_{q^{*}}\to\Or_q$ is a homeomorphism and hence can be lifted to a 
homeomorphism on $C$.  
From now on we will assume that $C=\D$ since the other two cases follow by the same 
strategy, using even simpler calculations. 

\begin{Claim}
 There is a unique lift $\tilde{\varphi}_q:\D\to\D$ of $\varphi_q$ 
such that $\tilde{\varphi}_q(0)=0$.
\end{Claim}

\begin{proof of claim}
Let $G_q$ denote the covering group of $\D$ over $\Or_q$ and assume that 
there exist two distinct lifts
$\tilde{\varphi}_q$, $\tilde{\tilde{\varphi}}_q$ 
of $\varphi_q$ that fix $0$. 
There exists a mapping $h\in G_q$ such that
$\tilde{\tilde{\varphi}}_q(z) = h(\tilde{\varphi}_q(z))$ holds for 
all $z\in\D$. It follows from our assumption that $h(0)=0$, hence 
$h\in\Stab(0)\subset G_q$, where 
$\Stab(0)$ denotes the stabilizer of $0$ in $G_q$.
But $\pi_q(0)=0$ and $0$ is a non-ramified point 
of $\Or_q$, which means that $\Stab(0)\subset G_q$ is trivial. 
Hence $h\equiv\id$ and $\tilde{\varphi}_q\equiv\tilde{\tilde{\varphi}}_q$. 
\end{proof of claim}

We have the following commutative diagram:
\begin{diagram}
\D         &\rTo^{\tilde{\varphi}_q} &\D\\
\dTo^{\pi_{q^{*}}} &                   &\dTo_{\pi_q}\\
\Or_{q^{*}}   &\rTo_{\varphi_q}           &\Or_q
\end{diagram}
Since $\varphi_q$ is a $h_q$-quasiconformal map 
and $\pi_{q^{*}}$ and $\pi_q$ are holomorphic,
the map $\tilde{\varphi}_q:\D\to\D$ is also $h_q$-quasiconformal. Moreover, 
$\tilde{\varphi}_q$ is conformal when restricted to the set 
$\Omega:=\D\setminus\pi_{q^{*}}^{-1}(D)$, so in particular in  
a sufficiently small neighbourhood of any point in the set $\{\pi_{q^{*}}^{-1}(p)\}$.

Furthermore, the lifts $\tilde{\varphi}_q$ converge to $\tilde{\varphi}_{q^{*}}$ 
(locally uniformly) as $q\to q^{*}$ and, due to the chosen normalization,  
$\tilde{\varphi}_{q^{*}}\equiv\id\vert_{\D}$. 
Moreover, when restricted to $\Omega$, the maps converge in the $C^1$-norm, hence 
$(\tilde{\varphi}_q)^{'}\vert_{\Omega}\to(\tilde{\varphi}_{q^{*}})^{'}\vert_{\Omega}$ 
when $q\to q^{*}$.

By the above diagram we can write
$\pi_q(z) = (\varphi_q\circ\pi_{q^{*}}\circ\tilde{\varphi}_q^{-1})(z)$
for every $z\in\D$. 
Recall that if $w\in S$ and $z_q\in\{\pi_q^{-1}(w)\}$, 
then the value of the density function $\rho_q$ at $w$ is given by 
$\rho_q(w)=\rho_{\D}(z_q)\cdot (\pi_q^{'}(z_q))^{-1}$ and  
this does not depend on the choice of the preimage of $w$. 
Similarly, if $z_{q^{*}}\in\{\pi_{q^{*}}^{-1}(w)\}$, 
then $\rho_{q^{*}}(w)=\rho_{\D}(z_{q^{*}})\cdot(\pi_{q^{*}}^{'}(z_{q^{*}}))^{-1}$. 
Hence, 
\begin{align*}
\vert\rho_{q^{*}}(w) - \rho_q(w)\vert = 
\left| \frac{\rho_{\D}(z_{q^{*}})}{\pi_{q^{*}}^{'}(z_{q^{*}})} - \frac{\rho_{\D}(z_q)}{\pi_q^{'}(z_q)}\right|.
\end{align*} 

Observe first that $\varphi_q(p)=p$, since $p\in S\setminus D$. 
Let us fix a point $p_q\in\{\pi_q^{-1}(p)\}$. Then
\begin{align}
\label{eqn_1}
p=\pi_q(p_q) = (\varphi_q\circ\pi\circ\tilde{\varphi}_q^{-1})(p_q) = (\pi\circ\tilde{\varphi}_q^{-1})(p_q).
\end{align}
Let $p_{q^{*}}\in\{\pi_{q^{*}}^{-1}(p)\}$ be the unique point such that 
$p_q=\tilde{\varphi}_q(p_{q^{*}})$.
We obtain
\begin{eqnarray*}
\pi_q^{'}(p_q) &=& 
\varphi_q^{'}((\pi_{q^{*}}\circ\tilde{\varphi}_q^{-1})(p_q))
\cdot\pi_{q^{*}}^{'}(\tilde{\varphi}_q^{-1}(p_q))
\cdot(\tilde{\varphi}_q^{-1})^{'}(p_q)\\ 
&\underbrace{=}_{\left(\ref{eqn_1}\right)}& 
\varphi_q^{'}(p)\cdot\pi_{q^{*}}^{'}(p_{q^{*}})\cdot(\tilde{\varphi}_q^{-1})^{'}(p_q)
\underbrace{=}_{\varphi_q^{'}(p)=1} \pi_{q^{*}}^{'}(p_{q^{*}})\cdot(\tilde{\varphi}_q^{-1})^{'}(p_q). 
\end{eqnarray*}
Hence
\begin{eqnarray*}
\vert \rho_{q^{*}}(p) - \rho_q(p)\vert 
&=& \frac{1}{\vert \pi_{q^{*}}^{'}(p_{q^{*}})\vert}\cdot \left|\rho_{\D}(p_{q^{*}}) - \frac{\rho_{\D}(\tilde{\varphi}_q(p_{q^{*}}))}{(\tilde{\varphi}_q^{-1})^{'}(p_q)}\right|\\
&=& \frac{1}{\vert\pi_{q^{*}}^{'}(p_{q^{*}})\vert}\cdot
\left|\frac{1}{1-\vert p_{q^{*}}\vert^2} - 
\frac{1}{(\tilde{\varphi}_q^{-1})^{'}(p_q)\cdot(1-\vert\tilde{\varphi}_q(p_{q^{*}})\vert^2)}\right|.
\end{eqnarray*}
Since $\tilde{\varphi}_q\to\id$ in the $C^1$-norm in a neighbourhood of 
$p_{q^{*}}$ when $q\to q^{*}$, the expression 
$(\tilde{\varphi}_q^{-1})^{'}(p_q)\cdot(1-\vert\tilde{\varphi}_q(p_{q^{*}})\vert^2)$ tends to
$1-\vert p_{q^{*}}\vert^2$ and hence $\vert \rho_{q^{*}}(p) - \rho_q(p)\vert \to 0$.
\end{proof}

\subsection{Estimates of metrics with infinitely many singularities}
We are now able to prove Theorem \ref{thm_main2}, which is 
the key-statement for the proof of Theorem \ref{thm_uniform_expansion}.
\begin{Theorem}
\label{thm_orb_main}
Let $K>1$ and let $z_i$, $i\in\N$, be an infinite sequence of points satisfying 
 $\vert z_i\vert <\vert z_{i+1}\vert\leq K\vert z_i\vert$.
Let $\Or=(\C,\nu_{\Or})$, where
\begin{eqnarray*}
\nu_{\Or}(z)=
 \begin{cases}
2&\text{ if } z=z_i\text{ for some i},\\
1&\text{ otherwise.}
 \end{cases}
\end{eqnarray*}
Then the density $\rho_{\Or}$ of the hyperbolic metric
on $\Or$ satisfies
\begin{align*}
\rho_{\Or}(z)\geq\Oo\left(\frac{1}{\vert z\vert}\right)\quad\text{as}\quad z\rightarrow\infty.
\end{align*}
\end{Theorem}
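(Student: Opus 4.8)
The plan is to derive the lower bound purely from the monotonicity of the hyperbolic metric under holomorphic orbifold maps (Theorem~\ref{pick}). Fix $z_0$ with $|z_0|$ large; we may assume $z_0\notin\{z_i\}$, since otherwise $\rho_{\Or}(z_0)=\infty$ and there is nothing to prove. I will choose three of the ramified points, $z_a,z_b,z_c$, of moduli comparable to $|z_0|$, and set $\Or_0:=(\C,\nu_0)$ with $\nu_0\equiv 2$ on $\{z_a,z_b,z_c\}$ and $\nu_0\equiv 1$ elsewhere. Then $\chi(\Or_0)=1-3\cdot\frac12<0$, so $\Or_0$ is hyperbolic, and the identity $\Or\hookrightarrow\Or_0$ is holomorphic (it forgets ramified points, so $\nu_0(z)$ divides $\nu_{\Or}(z)$ for every $z$, and it is not a covering); hence $\rho_{\Or}(z_0)\geq\rho_{\Or_0}(z_0)$. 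The affine map $\psi(w):=(w-z_a)/(z_b-z_a)$ is an orbifold isomorphism from $\Or_0$ onto the normalized orbifold $\Or_*^{(s)}$ whose surface is $\C$ and whose only ramified points are $0$, $1$ and $s:=\psi(z_c)$, all with ramification value $2$. Being an isomorphism, $\psi$ is an isometry for the two hyperbolic metrics (Theorem~\ref{pick}), so $\rho_{\Or}(z_0)\geq\rho_{\Or_0}(z_0)=|z_b-z_a|^{-1}\rho_{\Or_*^{(s)}}(\zeta_0)$ with $\zeta_0:=\psi(z_0)$. The problem is thereby reduced to choosing $z_a,z_b,z_c$ so that $|z_b-z_a|=\Oo(|z_0|)$ while $(s,\zeta_0)$ stays in a fixed compact, nondegenerate range, and to bounding $\rho_{\Or_*^{(s)}}(\zeta_0)$ below uniformly over that range.

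For the selection, I would place the moduli of $z_a,z_b,z_c$ in three annuli around $|z_0|$ separated by multiplicative gaps: $|z_a|\in(|z_0|/K,\,|z_0|]$, $|z_b|\in(K|z_0|,\,K^2|z_0|]$ and $|z_c|\in(K^3|z_0|,\,K^4|z_0|]$. Each of these annuli contains at least one $z_i$ once $|z_0|$ is large enough: if $z_m$ is the ramified point of largest modulus with $|z_m|\leq K^n|z_0|$, then $|z_{m+1}|>K^n|z_0|$ and $|z_{m+1}|\leq K|z_m|\leq K^{n+1}|z_0|$, so $z_{m+1}$ lies in the $n$-th annulus. Because consecutive modulus ranges are separated by a factor $>1$, all pairwise distances among $z_a,z_b,z_c$ are bounded below by a positive multiple of $|z_0|$ (for instance $|z_c-z_b|\geq (K^3-K^2)|z_0|$ and $|z_b-z_a|\geq (K-1)|z_0|$), and they, together with $|z_0-z_a|\leq 2|z_0|$, are bounded above by $(K^4+1)|z_0|$. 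Hence $|z_b-z_a|=\Oo(|z_0|)$, and $|\zeta_0|$, $|s|$, $|s-1|$ are all bounded above and bounded away from $0$ by constants depending only on $K$. In particular $(s,\zeta_0)$ ranges over a compact set $\mathcal K=\mathcal K(K)\subset(\C\setminus\{0,1\})\times\C$ that does not depend on $z_0$; the three points $0,1,s$ stay distinct, so $\Or_*^{(s)}$ stays genuinely hyperbolic.

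It remains to show $\rho_{\Or_*^{(s)}}(\zeta)\geq c(K)>0$ for all $(s,\zeta)\in\mathcal K$. For each fixed admissible $s$, $\Or_*^{(s)}$ is a fixed hyperbolic orbifold, and its density is smooth and strictly positive off $\{0,1,s\}$ and tends to $+\infty$ (like $|\zeta-\zeta_*|^{-1/2}$) at each ramified point $\zeta_*$; thus $\zeta\mapsto\rho_{\Or_*^{(s)}}(\zeta)$ is lower semicontinuous, strictly positive and $(0,\infty]$-valued. The remaining ingredient is uniformity in the parameter $s$, which is exactly the content of Theorem~\ref{thm_continuity}: moving the ramified point $s$ changes $\rho_{\Or_*^{(s)}}(\zeta)$ continuously at every fixed regular point $\zeta$. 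Combining these, the map $(s,\zeta)\mapsto\rho_{\Or_*^{(s)}}(\zeta)$ is lower semicontinuous and $(0,\infty]$-valued on the compact set $\mathcal K$, hence attains a strictly positive minimum $c(K)$ there. Substituting into the inequality from the first paragraph gives $\rho_{\Or}(z_0)\geq c(K)/((K^4+1)|z_0|)$ for all $z_0$ with $|z_0|$ sufficiently large, which is the asserted estimate.

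The step I expect to be the main obstacle is the last one: after the normalization, the three-point configuration could a priori degenerate — two of the three points colliding, or one escaping to $\infty$ — and then $\rho_{\Or_*^{(s)}}(\zeta_0)$ would go to $0$, destroying the bound. The purpose of the previous step is precisely to prevent this: choosing the three moduli in multiplicatively separated annuli of size comparable to $|z_0|$ confines $(s,\zeta_0)$ to a fixed compact family of nondegenerate orbifolds, after which Theorem~\ref{thm_continuity} together with compactness furnishes the uniform positive lower bound. (The estimate is moreover sharp: as already noted, the orbifold with ramified points $\{2k\pi i:k\in\Z\}$ has no ramified points in a right half-plane, so restricting its metric there produces the matching bound $\rho_{\Or}(z)=\Oo(1/|z|)$ along the positive real axis.)
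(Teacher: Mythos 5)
Your proof is correct and follows essentially the same route as the paper's: comparison with a signature-$(2,2,2)$ orbifold whose three ramified points are chosen from multiplicatively separated annuli of radius comparable to $\vert z_0\vert$, an affine rescaling to a fixed compact family of normalized orbifolds, and Theorem \ref{thm_continuity} together with compactness to extract a uniform positive minimum. (One small slip: $\vert\zeta_0\vert$ is not in fact bounded away from $0$, since $z_a$ may lie arbitrarily close to $z_0$; your lower-semicontinuity argument already covers this, whereas the paper sidesteps the issue by normalizing so that the reference point lies on the unit circle, at definite distance from all three ramified points.)
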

\begin{proof}
First observe that, by affine conjugacy, we can assume that 
$0$ is one of the ramified points of $\Or$. 

Let $z\neq z_i$ be an arbitrary but fixed point in $\Or$. Depending on $z$, we choose 
$b=b(z)=z_k$, where $z_k$ satisfies $\vert z_k\vert\geq 2\vert z\vert$ and is minimal
with this property, i.e., if $\vert z_j\vert<\vert z_k\vert$ then 
$\vert z_j\vert <2\vert z\vert$.
It follows immediately that
\begin{align}
\label{eqn_b}
2\vert z\vert\leq\vert b\vert=\vert z_k\vert\leq K\vert z_{k-1}\vert\leq 2K\vert z\vert.
\end{align}

As next we set $c=c(z)=z_l$, where $z_l$ is minimal with the 
property $\vert z_l\vert\geq 2\vert b\vert$. We then obtain
\begin{align}
\label{eqn_c}
4\vert z\vert\leq 2\vert b\vert\leq\vert c\vert = \vert z_l\vert\leq K\vert z_{l-1}\vert
\leq 2K\vert b\vert\leq 4K^2\vert z\vert.
\end{align}
For any three pairwise distinct points $p,q,r\in\C$ denote by 
$\Or_{p,q,r}:=(\C,\nu_{p,q,r})$ the orbifold defined by 
\begin{eqnarray*}
\nu_{p,q,r}(w)=\begin{cases} 2 &\text{ if } w\in\{ p,q,r\},\\ 1 &\text{ otherwise}.\end{cases}
\end{eqnarray*}
Note that every such orbifold is hyperbolic, since its Euler characteristic 
equals $-1/2$. 
We denote by $\rho_{p,q,r}$ the density of the hyperbolic metric on $\Or_{p,q,r}$.

Observe first that $\Or$ is holomorphically embedded in $\Or_{0,b,c}$, 
 and it follows from 
Theorem \ref{pick} that $\rho_{\Or}(w)>\rho_{0,b,c}(w)$ holds for all $w\in\Or$.
Let $\tilde{b}=\tilde{b}(z):=b/\vert z\vert$ 
and $\tilde{c}=\tilde{c}(z):=c/\vert z\vert$. 
Then the map   
\begin{align*}
 S_z:\Or_{0,b,c}\to \Or_{0,\tilde{b},\tilde{c}},\quad w\mapsto\frac{w}{\vert z\vert}
\end{align*}
is obviously a conformal isomorphism and hence a local isometry.
Altogether, we obtain 
\begin{eqnarray}
\label{eqn_abc}
\rho_{0,\tilde{b},\tilde{c}}(S(w)) &=& 
\rho_{0,b,c}(w)\cdot\vert w\vert<\rho_{\Or}(w)\cdot\vert w\vert.
\end{eqnarray}
Let $\tilde{z}:=S(z)=z/\vert z\vert$. Then equations (\ref{eqn_b}) and (\ref{eqn_c}) 
yield 
\begin{align*}
2\leq\vert\tilde{b}\vert\leq 2K\quad\text{and}\quad 4\leq\vert \tilde{c}\vert\leq 4K^2,
\end{align*}
i.e., $\tilde{b}\in A_1=A_1(K):=\{w:2\leq\vert w\vert\leq 2K\}$ and 
$\tilde{c}\in A_2=A_2(K):=\{w: 4\leq\vert w\vert\leq 4K^2\}$ 
belong to compact annuli disjoint from $\tilde{z}$ (see Figure \ref{figure_annuli}).  

By Theorem \ref{thm_continuity} the map
\begin{align*}
D^2: \C\setminus\{\tilde{z}\} \times \C\setminus\{\tilde{z}\} \to (0,\infty),\quad 
(x,y)\mapsto \rho_{0,x,y}(\tilde{z})
\end{align*}
is a composition of two continuous maps and hence itself continuous. Furthermore,
it attains its minimum (and maximum) on the compact set $A_1\times A_2$. Hence, 
there exist constants $0<m(K),M(K)<\infty$ depending only on $K$ such that
\begin{align*}
m(K)<\rho_{0,\tilde{b},\tilde{c}}(\tilde{z})<M(K).
\end{align*}
By setting $w=z$ in equation (\ref{eqn_abc}), we finally get
\begin{align*}
m(K)\cdot\frac{1}{\vert z\vert}\leq \frac{\rho_{0,\tilde{b},\tilde{c}}(\tilde{z})}{\vert z\vert}<\rho_{\Or}(z),
\end{align*}
and the assertion of the theorem follows.
\end{proof}

\begin{figure}
\centering
\psfrag{a}{$0$}
\psfrag{b}{$\tilde{b}$}
\psfrag{c}{$\tilde{c}$}
\psfrag{1}{$1$}
\psfrag{2}{$2$}
\psfrag{4}{$4$}
\psfrag{w}{$\tilde{z}$}
\psfrag{A1}{$A_1$}
\psfrag{A2}{$A_2$}
\includegraphics[width=.5\linewidth]{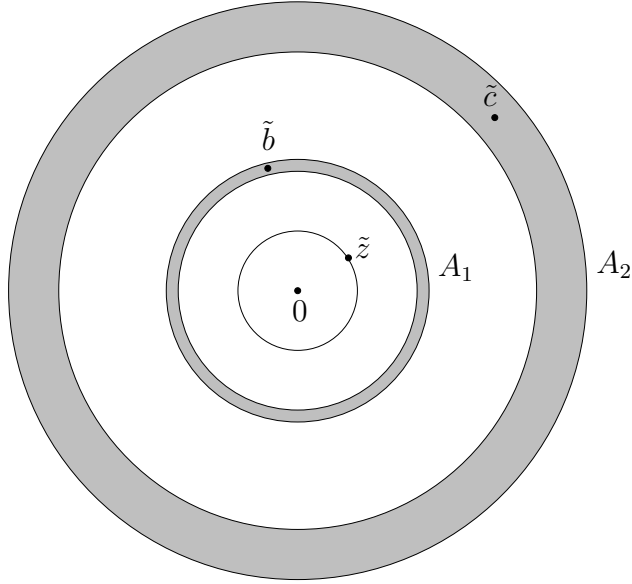}
\caption{The parameters $\tilde{b}$ and $\tilde{c}$ belong to the 
compact annuli $A_1$ and $A_2$.}
\label{figure_annuli}
\end{figure}

Theorem \ref{thm_orb_main} and Corollary \ref{cor_(f)} 
immediately imply the following.

\begin{Corollary}
\label{estimate_rhotilde}
Let $f$ be strongly subhyperbolic and let 
$(\Ort_f,\Or_f)$ be dynamically associated to $f$. 
Then
\begin{eqnarray*}
\tilde{\rho}_f(z)\geq \Oo\left( \frac{1}{\vert z\vert}\right) \quad\text{as}\quad z\rightarrow\infty,
\end{eqnarray*}
where $\tilde{\rho}_f(z)$ denotes the density of the hyperbolic metric on $\Ort_f$.
\end{Corollary}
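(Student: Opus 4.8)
The plan is to deduce the estimate directly from Theorem~\ref{thm_orb_main} together with the Pick monotonicity principle (Theorem~\ref{pick}), using Corollary~\ref{cor_(f)} only to supply the sequence of ramified points that feeds into Theorem~\ref{thm_orb_main}.

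First I would apply Corollary~\ref{cor_(f)} to obtain a constant $K>1$ and an infinite sequence of points $z_i\in\widetilde{S}_f$ for which $\tilde{\nu}_f(z_i)$ is a multiple of $2$ and $\vert z_i\vert<\vert z_{i+1}\vert\leq K\vert z_i\vert$ holds for all $i$. Out of this data I form exactly the model orbifold of Theorem~\ref{thm_orb_main}, namely $\Or=(\C,\nu_{\Or})$ with $\nu_{\Or}(z)=2$ when $z=z_i$ for some $i$ and $\nu_{\Or}(z)=1$ otherwise; Theorem~\ref{thm_orb_main} then provides the bound $\rho_{\Or}(z)\geq\Oo(1/\vert z\vert)$ as $z\to\infty$.

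The second, and essentially the only, step is to compare $\Ort_f$ with $\Or$. The underlying surface $\widetilde{S}_f=f^{-1}(S_f)$ is a subset of $\C$, so the identity is a candidate for a holomorphic orbifold inclusion $\Ort_f\hookrightarrow\Or$; to verify this one only needs $\nu_{\Or}(z)$ to divide $\tilde{\nu}_f(z)$ for every $z\in\widetilde{S}_f$. Away from the sequence $(z_i)$ this is immediate because there $\nu_{\Or}\equiv 1$, and at $z=z_i$ it holds precisely because $\tilde{\nu}_f(z_i)$ was chosen to be a multiple of $\nu_{\Or}(z_i)=2$. Since $\Ort_f$ need not be connected, I would run this comparison on each connected component of $\Ort_f$, each of which is a hyperbolic orbifold mapping holomorphically into the connected hyperbolic orbifold $\Or$.

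By Theorem~\ref{pick} a holomorphic map between hyperbolic orbifolds never increases hyperbolic distances, so $\tilde{\rho}_f(z)\geq\rho_{\Or}(z)$ for all $z\in\widetilde{S}_f$; combining this with the lower bound from the first step yields $\tilde{\rho}_f(z)\geq\Oo(1/\vert z\vert)$ as $z\to\infty$ along $\widetilde{S}_f$, which is the assertion. I expect no serious obstacle: the only points needing care are the routine verifications that the $z_i$ genuinely lie in $\widetilde{S}_f$ and that the divisibility condition $\nu_{\Or}\mid\tilde{\nu}_f$ holds, both of which are read off from the construction in Proposition~\ref{prop_Or} and from Corollary~\ref{cor_(f)}, together with the bookkeeping of applying Theorem~\ref{pick} componentwise. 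All the substantive work has already been carried out in the proof of Theorem~\ref{thm_orb_main}.
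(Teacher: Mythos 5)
Your argument is exactly the one the paper intends: the paper states that Corollary \ref{estimate_rhotilde} follows "immediately" from Theorem \ref{thm_orb_main} and Corollary \ref{cor_(f)}, and the route you spell out --- take the sequence $(z_i)$ from Corollary \ref{cor_(f)}, build the model orbifold of Theorem \ref{thm_orb_main} on it, check that $\nu_{\Or}\mid\tilde{\nu}_f$ so that the inclusion $\Ort_f\hookrightarrow\Or$ is holomorphic, and apply Theorem \ref{pick} to get $\tilde{\rho}_f\geq\rho_{\Or}$ --- is precisely that deduction, with the componentwise application of Pick's theorem being a correct and harmless extra precaution. The proposal is correct and takes essentially the same approach as the paper.
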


\subsection{Proof of uniformity}
Using Theorems \ref{thm_orb_main} and \ref{thm_continuity},
we can finally deduce that $f:\Ort_f\to\Or_f$ is a uniform expansion
with respect to the hyperbolic metric of $\Or_f$.

\begin{proof}[Proof of Theorem \ref{thm_uniform_expansion}] 
Let $\tilde{\rho}_f$ and $\rho_f$ denote the densities of the 
hyperbolic metrics on $\Ort_f$ and $\Or_f$, respectively. 
Since $f:\Ort_f\rightarrow\Or_f$ is a covering map, 
our claim is equivalent to the statement 
that there is a constant $E>1$ such that 
\begin{eqnarray*}
\frac{\tilde{\rho}_f(z)}{\rho_f(z)}\geq E>1.
\end{eqnarray*}
If $\F(f)\neq\emptyset$, 
recall that by Proposition \ref{prop_Or}$(e)$, 
$\Or_f$ is modelled on a 
hyperbolic domain $S_f$ with $\overline{f^{-1}(S_f)}\subset S_f$, 
implying that $\Or_f$ and $\Ort_f$ have no common boundary points in $\C$. 
The same is true if the underlying surface is $\C$, 
which means that $\infty$ is the only common boundary point 
of $\Or_f$ and $\Ort_f$. Hence it only remains to check that for some $E^{'}>1$, 
\begin{eqnarray*}
\lim_{z\rightarrow\infty}\frac{\tilde{\rho}_f(z)}{\rho_f(z)}\geq E^{'}>1.
\end{eqnarray*} 
Let $C\subset\Or_f$ be the complement of a closed Euclidean disk centred at $0$ 
such that $\nu_f(z)=1$ for all $z\in C$, 
and denote by $\rho_C$ the density of the hyperbolic metric on $C$. 
Then there is a right half-plane $H\subset\C$ such that the map 
$\exp:H\rightarrow C,\;z\mapsto\e^z$ is a covering. 
Hence the asymptotic behaviour of  $\rho_C$ is given by 
\begin{eqnarray*}
\rho_C(z)= \Oo\left( \frac{1}{\vert z\vert\cdot\log\vert z\vert}\right) 
\quad\text{as}\quad z\rightarrow\infty.
\end{eqnarray*}
By Theorem \ref{pick} $\rho_C(z)\geq\rho_f(z)$, and so 
\begin{eqnarray*}
\rho_f(z)\leq\Oo \left( \frac{1}{\vert z\vert\cdot\log\vert z\vert}\right)\quad\text{as}\quad z\rightarrow\infty.
\end{eqnarray*}
It now follows from Corollary \ref{estimate_rhotilde} that 
\begin{eqnarray*}
\frac{\tilde{\rho}(z)}{\rho(z)}\geq\Oo\left(\log\vert z\vert\right) 
\end{eqnarray*}
and hence
\begin{eqnarray*}
\frac{\tilde{\rho}(z)}{\rho(z)}\rightarrow\infty\quad\text{as}\quad z\rightarrow\infty.
\end{eqnarray*}

\end{proof}

\begin{Remark}
We already mentioned in the introduction that 
if we replace the ramified points by punctures, i.e., if we consider 
the hyperbolic domain $U:\C\setminus\{ z_j\}$ instead of the orbifold $\Or$, 
the same bound for the asymptotic behaviour of the 
density map near $\infty$ can be obtained using standard estimates of 
the hyperbolic metric in the twice-punctured plane \cite[Lemma 2.1]{rempe_5}.
As will become clear in the proof of Proposition \ref{prop_cos_est},
the orbifold for which the set of ramified points
is given by $\{2 k\pi i: k\in\Z\}$  shows that our 
estimate is best possible.
\end{Remark}

\subsection{Cosine maps}
\label{subs_cosine}
We say that $F_{a,b}$ is a \emph{cosine map}, 
if it can be written as
\begin{align*}
F_{a,b}(z)=a\e^z + b\e^{-z}
\end{align*}
for some $a,b\in\C^{*}$.
Every such function $F_{a,b}$ has exactly two critical values, namely
$v_1=2\sqrt{ab}$ and $v_2=-2\sqrt{ab}$. 
Furthermore, if $z\in\C$ is a preimage of a critical value, then 
$z$ is a critical point satisfying $\deg(F_{a,b},z)=2$, 
which implies that $v_1$ and $v_2$ are totally ramified.
It is easy to check that $F_{a,b}$ has no asymptotic values, 
hence the critical values 
$v_1$ and $v_2$ are the only singular values.
In particular, every subhyperbolic cosine map 
is automatically strongly subhyperbolic.

In \cite{schleicher}, Schleicher studied landing properties of 
those cosine maps for which the critical values are strictly
preperiodic. Note that the Julia set of every such function equals $\C$. 
He proved that for such a map, every point in $\C$ is either on a dynamic ray 
or the landing point of a dynamic ray.
This result will also follow from Theorem \ref{thm_maintheorem}. 
Moreover, our proof 
in the case of strongly subhyperbolic cosine maps 
is considerably more concise and elementary than the proof of the general statement  
and the proof (of the weaker statement) given in \cite{schleicher}.
The reason is that 
for strongly subhyperbolic cosine maps, 
we can compute \emph{explicitly} the required estimates of the metrics of 
certain dynamically associated orbifolds.

Let us start with a simple observation.

\begin{Proposition}
Let $F=F_{a,b}$ be strongly subhyperbolic but not hyperbolic. 
Then there exists a point $p\in P_{\J}\setminus S(F)$.
\end{Proposition}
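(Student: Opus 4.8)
The statement asks: if $F=F_{a,b}$ is strongly subhyperbolic but not hyperbolic, then $P_{\J}\setminus S(F)\neq\emptyset$. Since $F$ is not hyperbolic, $P_{\J}=P(F)\cap\J(F)\neq\emptyset$, so $F$ has at least one singular value in its Julia set. As recalled in the preliminaries, subhyperbolicity forces every such singular value to be strictly preperiodic and to land, after finitely many iterates, on a repelling cycle. The plan is to produce a point of $P_{\J}$ that lies \emph{beyond} the singular set $S(F)=\{v_1,v_2\}$ in the forward orbit, and to show it cannot equal $v_1$ or $v_2$.

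\textbf{Key steps.} First, pick a singular value, say $v_1\in\J(F)$, which by subhyperbolicity is strictly preperiodic; let its forward orbit eventually fall on a repelling cycle $\mathcal{C}=\{c_1,\dots,c_m\}\subset\J(F)$. Every point of $\mathcal{C}$ lies in $P(F)\cap\J(F)=P_{\J}$. Second, I claim some point $c_j$ of this cycle is not a singular value. Indeed, $S(F)=\{v_1,v_2\}$ has only two elements, so if every $c_j\in\{v_1,v_2\}$ then the cycle has length $m\le 2$ and consists entirely of critical values. Third, rule this out: because $v_1,v_2$ are \emph{totally ramified} critical values of the cosine map (every preimage of a critical value is a critical point of local degree $2$), a critical value that is periodic would be a superattracting-type obstruction — more precisely, if $v_1$ were periodic then the periodic cycle through $v_1$ would contain a critical point (namely a preimage of $v_1$ on the cycle is a critical point of $F$), making the cycle attracting, contradicting $\mathcal{C}\subset\J(F)$. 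Hence neither $v_1$ nor $v_2$ is periodic, so the repelling cycle $\mathcal{C}$ contains a point $p=c_j\notin\{v_1,v_2\}=S(F)$, and $p\in P_{\J}\setminus S(F)$ as required.

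\textbf{Handling the degenerate reasoning carefully.} The one subtlety is the third step: I must make sure a periodic critical value genuinely contradicts $\mathcal{C}\subset\J(F)$. The argument is that if $v_i$ has period $n$, then $F^n(v_i)=v_i$; since $v_i$ is a critical value, it has a preimage $w$ with $\deg(F,w)=2$, and $w$ lies on the same cycle (as the cycle, being periodic of period $n$, must contain all the backward-orbit points that map forward into it along the cycle — one checks that the immediate predecessor of $v_i$ on the cycle is indeed a critical point since it is a $F$-preimage of a critical value). Then the cyclic derivative $(F^n)'$ evaluated around the cycle vanishes at $w$, so the cycle is superattracting, hence in $\F(F)$, contradiction. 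Alternatively, and more cleanly, invoke the standard fact for subhyperbolic maps already cited in the preliminaries: critical points in $\J(f)$ are \emph{strictly} preperiodic, so a critical point can never lie on a periodic cycle inside $\J(F)$; combined with total ramification of $v_1,v_2$ (so the relevant preimage is forced to be critical), neither critical value can be periodic.

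\textbf{Expected main obstacle.} The only place requiring care is pinning down that the \emph{immediate predecessor of a critical value on a periodic cycle is itself a critical point}; this uses the total ramification property special to cosine maps (stated just above the proposition), which is precisely why the argument works here even though it could fail for a general strongly subhyperbolic map. Everything else is bookkeeping with the definitions of $P_{\J}$, $S(F)$, and subhyperbolicity.
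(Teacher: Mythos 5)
Your proof is correct and follows essentially the same route as the paper: both arguments reduce to the observation that a periodic critical value of a cosine map, being totally ramified, would have a critical point as its predecessor on the cycle and hence lie on a superattracting cycle, contradicting membership in $\J(F)$. The only cosmetic difference is that the paper assumes $P_{\J}=S(F)$ and uses forward invariance to force $v_1$ to be periodic, whereas you examine the terminal repelling cycle; note that your opening claim that $v_1$ is \emph{strictly} preperiodic is not immediate from subhyperbolicity (only preperiodicity is, for singular values), but your third step supplies exactly the missing argument.
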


\begin{proof}
Since $F$ is not hyperbolic, it follows that at least one critical value of $F$,
say $v_1$,  
belongs to $\J(F)$.
Now assume that the claim is wrong, i.e., $P_{\J}=S(F)$. Since $P_{\J}$ is forward 
invariant, this can only occur if $F(v_1)=v_1$ or $v_1$ and $v_2$ form a cycle. 
However, since $v_1$ is totally ramified, it would then be a superattracting periodic 
point of $F$, contradicting the assumption that $v_1\in\J(F)$.
\end{proof}

For simplicity, let us assume that 
$\{ v_1,v_2\}\subset \J(F)$; the case when 
$\F(F)\neq\emptyset$ can be treated in a very similar way (and is even easier). 
Let $\Or_F=(\C,\nu_F)$ and $\Ort_F=(\C,\tilde{\nu}_F)$, where 
\begin{align*}
\nu_F(w)=\lcm\{\deg(F^n,z), \text{ where } F^n(z)=w\}\quad\text{and}\quad
\tilde{\nu}_F(z) = \frac{\nu(F(z))}{\deg(F,z)}.
\end{align*}
It is straightforward to check that $(\Ort_F,\Or_F)$ is a pair of orbifolds
dynamically associated to $F$. 
(In fact, this is how we constructed dynamically associated 
orbifolds in the proof of Proposition \ref{prop_Or}.)
In particular, $\nu_F(z)\in\{1,2,4\}$ for all $z\in\C$.

Let us fix a point $p\in P_{\J}\setminus S(F)$.
Then $p$ has only regular preimages $p_i$, 
for which necessarily $\tilde{\nu}_F(p_i)=\nu_F(p)\in\{ 2,4\}$.
Since $F$ is $2\pi$-periodic, 
the orbifold $\Ort_F$ is holomorphically embedded in the orbifold
$\Or_0=(\C,\nu_0)$ defined by 
\begin{eqnarray*}
\nu_0(z)=\begin{cases}\nu_F(p) &\text{ if } w=2\pi n\text{ for some }n\in\Z,\\ 1 &\text{ otherwise}.\end{cases}
\end{eqnarray*}
In particular, if $\tilde{\rho}_F$ and $\rho_0$ denote the densities 
of the hyperbolic metrics on $\Ort_F$ and$\Or_0$, respectively, then  
\begin{align*}
\tilde{\rho}_F(z)>\rho_0(z)
\end{align*}
holds for all $z\in\C$. For $\rho_0$ we can give the following explicit lower bound.
\begin{Proposition}
\label{prop_cos_est}
The density function $\rho_0(z)$ satisfies
\begin{align*}
\rho_0(z)\geq \frac{1}{C + \vert \Rea(z)\vert}
\end{align*}
for all $z\in\C$, with $4<C<6$.
\end{Proposition}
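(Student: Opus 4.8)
The plan is to relate the orbifold $\Or_0=(\C,\nu_0)$, whose ramified points are the lattice $2\pi\Z$ with a common ramification value $m:=\nu_F(p)\in\{2,4\}$, to a model orbifold on which the hyperbolic metric can be computed in closed form. The natural candidate is the quotient of $\C$ by the group generated by the translation $z\mapsto z+2\pi$ together with the rotation by $\pi$ around one of the ramified points; equivalently, one recognizes that the map $z\mapsto\cos z$ (or a suitable affine rescaling of it) exhibits $\Or_0$ as a covering of a sphere orbifold, or one uses that $\Or_0$ itself is uniformized by an explicit elementary function. Concretely, since $F$ is $2\pi$-periodic, $\e^{\iu z}$ conjugates the translation symmetry to multiplication by a root of unity; passing to the cylinder $\C^*$ via $w=\e^{\iu z}$, the ramified points $2\pi\Z$ collapse to the single point $w=1$, so $\Or_0$ is a covering of the cylinder orbifold with one marked point of order $m$ and two punctures at $0,\infty$. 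That orbifold is hyperbolic, and its universal cover and metric can be written down.

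The key steps, in order, are: (i) record the symmetry group of $\Or_0$ and identify the uniformizing map explicitly — I would reduce to the cylinder orbifold $(\C^*,\text{one point of order }m)$ via $w=\e^{\iu z}$ and then uniformize that by a known elementary function (its universal cover is $\D$ and the covering map is built from power maps and Möbius transformations); (ii) push the hyperbolic density $\rho_\D$ forward through this chain of maps to obtain a formula for $\rho_0(z)$ in terms of $\Rea(z)$ and $\Ima(z)$, using that $|\e^{\iu z}|=\e^{-\Ima z}$ so that the vertical direction behaves like a punctured disk near $\pm\iu\infty$; (iii) extract from this formula a clean lower bound. Because the statement to be proved only involves $|\Rea(z)|$, the worst case is along the real axis (where one is farthest from the punctures at $\pm\iu\infty$ and the influence of the ramified lattice is weakest), and there the metric decays like $1/|\Rea z|$ for large $|\Rea z|$; near a ramified point the density blows up, so a bound of the form $1/(C+|\Rea z|)$ with a universal constant $C$ is exactly what the geometry gives. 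Locating $C$ in $(4,6)$ is then a matter of estimating the comparison constant at, say, $z$ on the real axis between two ramified points, where one can compare with the hyperbolic metric of a concrete thrice-punctured or twice-punctured sphere using the standard estimates already cited in the paper \cite{milnor}.

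The main obstacle I anticipate is step (ii): producing an estimate that is simultaneously explicit enough to pin $C$ between $4$ and $6$ and uniform over all of $\C$, including the directions $\Ima z\to\pm\infty$ where $\rho_0$ is actually much larger than $1/|\Rea z|$ (so the inequality is easy there) and along $\Rea z\to\infty$ where the two competing effects — the decay from approaching the ``ends'' of the cylinder and the periodic array of ramified points — must be balanced. I would handle the uniformity by monotonicity: $\rho_0$ restricted to a horizontal line is minimized at the points equidistant (in the flat metric) from the ramified lattice, and $\rho_0$ as a function of $|\Ima z|$ is monotone, so it suffices to bound $\rho_0$ from below on the segment of the real axis between two consecutive ramified points and then invoke the explicit uniformization to see that moving off that segment only increases the density. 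For the numerical constant I would compare $\Or_0$ with the orbifold obtained by forgetting all ramified points except two neighbouring ones (which, after the cylinder reduction, is a twice-punctured plane type situation), apply \cite[Corollary A.8]{milnor} or the classical Landau-type bound for the hyperbolic metric with three marked points, and check that the resulting numerical comparison yields $4<C<6$; the remark in the paper that the lattice $2k\pi\iu$ shows the $1/|z|$ estimate in Theorem \ref{thm_orb_main} is optimal is precisely the statement that this constant cannot be taken to be $0$, which is consistent with $C>0$ here.
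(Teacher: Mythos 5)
There is a genuine gap, and it sits exactly where you flagged the ``main obstacle''. The whole difficulty of the proposition is obtaining a \emph{lower} bound for the orbifold density with explicit constants, and neither of the two routes you sketch in steps (i)--(ii) can deliver it. First, the cylinder orbifold $(\C^{*},\nu(1)=2)$ has signature $(2,\infty,\infty)$; it is hyperbolic and its universal covering map from $\D$ is a Schwarz triangle function, not something ``built from power maps and M\"obius transformations'', so there is no closed-form formula for $\rho_0$ to push forward. Second, comparison with punctured surfaces goes the wrong way: the inclusion $\C\setminus\{0,1\}\hookrightarrow(\C^{*},\nu(1)=2)$ is holomorphic, hence $\rho_{\C\setminus\{0,1\}}\geq\rho_2$ by Pick's theorem, so a lower bound for the thrice- or twice-punctured plane gives no lower bound for the orbifold; and keeping only two ramified points of order $2$ in $\C$ produces a \emph{parabolic} orbifold, which yields nothing. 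The device the paper uses, and which your proposal is missing, is the explicit degree-two orbifold \emph{covering} $z\mapsto-4(z^2-z)$ from $\C\setminus\{0,1\}$ onto $(\C^{*},\nu(1)=2)$: because a covering is a local isometry, the Beardon--Pommerenke lower bound for the thrice-punctured sphere transfers \emph{exactly}, with its explicit constants, to $\rho_2$, and then pulls back under the exponential covering to $\rho_0$; the stated bound $4<C<6$ comes from elementary simplification of the resulting expression, not from a compactness or Landau-type comparison.

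A secondary problem is that your geometric picture of $\Or_0$ is internally inconsistent. With the ramified lattice placed on one axis, $\rho_0$ is periodic and bounded below along that axis (blowing up at the marked points), and it is in the \emph{transverse} direction --- towards the two ends of the cylinder --- that the density decays like the reciprocal of the distance. You place the lattice on the real axis and then claim both that the metric decays like $1/|\Rea z|$ along the real axis and that the inequality is ``easy'' as $\Ima z\to\pm\infty$; with that placement the opposite is true, and the stated inequality would in fact fail as $\Ima z\to\pm\infty$. (The intended setup, consistent with the statement and with the Remark following Theorem \ref{thm_uniform_expansion}, has the lattice $2\pi i\Z$ on the imaginary axis and the covering $z\mapsto\e^{z}$, which is why the bound involves $|\Rea z|$.) The monotonicity claims you invoke to reduce to a fundamental segment are also asserted without proof and are not needed once one has the explicit chain of coverings.
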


\begin{proof}
For all points in the punctured halfplane 
$\lbrace z\neq 0: \Rea(z)\leq 1/2\rbrace
\subset\C\backslash\lbrace 0,1\rbrace$, 
the density $\rho_1$ of the hyperbolic metric of $\C\setminus\{0,1\}$ 
can be bounded from below by
\begin{eqnarray*}
\frac{1}{\rho_1(z)}\leq C_1\cdot \vert z\vert\cdot(C_2 + \vert \log\vert z\vert\vert),
\end{eqnarray*}
where $C_1:=2\sqrt{2}$ and $C_2:=4 + \log(3+2\sqrt{2})$ \cite[p. 476]{beardon_pommerenke}.

Let $\Or_2:=(\C^{*},\nu_2)$, where $\nu_2(1)=2$ and $\nu_2(z)=1$ for all $z\neq 1$.
We easily see that the map 
$p:\C\backslash\lbrace 0,1\rbrace\rightarrow\Or_2,\; z\mapsto -4 (z^2 -z)$ is 
a covering map and hence a local isometry.
A simple calculation yields
\begin{eqnarray*}
\frac{1}{\rho_2(w)}\leq 2 C_1\cdot \vert\sqrt{1-w}\vert\cdot\vert 1-\sqrt{1-w}\vert\cdot(C_2 + \log 2 + \vert\log\vert 1-\sqrt{1-w}\vert\vert),
\end{eqnarray*}
where $\rho_2(z)$ is the density of the hyperbolic metric of $\Or_2$ 
and $\sqrt{z}$ denotes the principle branch of the squareroot.

Since the map $\Or_0\to\Or_2,\; z\mapsto \e^{i z}$ is a covering map,
it follows that
\begin{eqnarray*}
\frac{1}{\rho_0(z)}\leq 2 C_1\cdot \vert\sqrt{1-\e^z}\vert\cdot\vert 1-\sqrt{1-\e^z}\vert\cdot( C_2 + \log 2 + \vert\log\vert 1-\sqrt{1-\e^z}\vert\vert)\cdot\vert\e^{-z}\vert
\end{eqnarray*}
for every $z\in\Or_0$.
Let us simplify the above expression. We note, 
by expanding with $\vert 1+\sqrt{1-\e^z}\vert$, that    
\begin{eqnarray*}
\frac{\vert\sqrt{1-\e^z}\vert\cdot\vert 1-\sqrt{1-\e^z}\vert}{\vert\e^z\vert}
= \frac{\vert\sqrt{1-\e^z}\vert}{\vert 1+\sqrt{1-\e^z}\vert},
\end{eqnarray*}
and one can easily see that the obtained expression is bounded from above by $\sqrt{2}$. 

Let us now consider $\vert\log\vert 1-\sqrt{1-e^z}\vert\vert$. 
Since $\vert\log 1/z\vert=\vert\log z\vert$, it is enough to restrict to 
the case when $\vert 1-\sqrt{1-e^z}\vert\geq 1$. 
Here we get 
\begin{eqnarray*}
\vert 1-\sqrt{1-\e^z}\vert&\leq& 1+ \sqrt{\vert 1-\e^z\vert}\leq\max\lbrace 2,\; 2\sqrt{\vert 1-\e^z\vert}\rbrace\leq\max\lbrace 2,\; 2\vert \e^z\vert\rbrace
\end{eqnarray*}
and hence
\begin{eqnarray*}
\vert\log\vert 1-\sqrt{1-\e^z}\vert\vert \leq \log 2 + \vert \Rea(z)\vert.
\end{eqnarray*}
Together, these estimates yield the proof.
\end{proof}

\begin{Remark}
Let $a,b,c\in\C$ and denote by $\Or_{a,b,c}$ the 
$\C$-orbifold with signature $(2,2,2)$, with  $a$, $b$ and $c$
being the ramified points. 
Then there exists a (unique) M\"{o}bius map $M$ mapping $0$, $1$ and $-1$ to 
$a$, $b$ and $c$, respectively. Moreover, the map $z\mapsto M(\sin z)$ 
provides a covering map from $\Or_0$ to $\Or_{a,b,c}$.  
This observation enables us to estimate the hyperbolic metric of an 
arbitrary $\C$-orbifold with signature $(2,2,2)$ using simple
calculations, and hence provides an alternative way of proving 
Theorem \ref{thm_orb_main},  which --- though it is less elegant ---  
uses only elementary observations. 
\end{Remark}

\section{Construction of a semiconjugacy}
\label{sec semiconjugacy}

Recall that our goal is to construct a continuous and 
surjective map $\phi: \J(g)\rightarrow\J(f)$, 
where $g$ is any map of disjoint type that belongs to the family
\begin{eqnarray*} 
\lbrace g_{\lambda}(z)=f(\lambda z):\;\lambda\in\C\rbrace,
\end{eqnarray*}
such that
\begin{eqnarray*}
f\circ\phi(z)=\phi\circ g(z)
\end{eqnarray*}
holds for all $z\in\J(g)$. 
Recall that by \cite[Theorem 5.2]{rempe_5}, any two such maps 
$g$ and $g^{'}$ are conjugate on their Julia sets,
hence it is enough to prove the statement for 
one such map. 
We start with the construction of such a map $g$. 

Let us fix a pair of orbifolds $(\Ort_f,\Or_f)$ 
dynamically associated to $f$ 
with underlying surfaces $\widetilde{S}_f$ and $S_f$, respectively.
Note that, by Proposition \ref{prop_Or}$(c)$, 
$S_f$ can be written as $S_f=\C\setminus C$, 
where $C$ is a, possibly empty, compact set.

Observe that for every $\lambda\in\C^{*}$, $S(g_{\lambda})=S(f)$. 
Let $K>0$ be sufficiently large, such that 
$(P(f)\cup C)\subset\lbrace \vert z\vert<K/2\rbrace$. 
Since $f$ is entire, it maps bounded sets to bounded sets, 
hence there exists $L\geq K$ such that 
\begin{align*}
f^{-1}\left(\{z: \vert z\vert>L\}\right)\subset \{z: \vert z\vert>K+1\}.
\end{align*}

Let us fix a constant $L\geq K$ with this property and define $\mu:=K/L$. 
It then follows that if $g=g_{\mu}$ and $z$ is a point with $\vert g(z)\vert>L$, then
$\vert\mu z\vert>K+1$ and hence $\vert z\vert>L+L/K$. This means, 
\begin{align*}
g^{-1}\left(\{z: \vert z\vert>L\}\right)\subset \{z: \vert z\vert>L+L/K\},
\end{align*}
and, in particular, it follows from Proposition 
\ref{prop_dt} that $g$ is of disjoint type.

Define
\begin{eqnarray*}
V_j:= f^{-j}\left( \lbrace z: \vert z\vert>K\rbrace\right)\quad\text{and}\quad U_j:= g^{-j}\left( \lbrace z: \vert z\vert>L\rbrace\right).
\end{eqnarray*}

\begin{Remark}
Note that $V_j\subset\Or_f\cap\Ort_f$ holds for all $j\geq 0$, 
such as $U_{j+1}\subset U_j$, since $g$ is of disjoint type. 
Furthermore, $\J(g)$ is the set of those points that are never mapped into 
$\C\setminus U_0$, hence 
$\J(g)$ equals the limit of the domains $U_j$. 

\end{Remark}

\begin{figure}
\centering
\psfrag{Vnull}{$V_0$}
\psfrag{Veins}{$V_1$}
\psfrag{Unull}{$U_0$}
\psfrag{Ueins}{$U_1$}
\psfrag{K}{$K$}
\psfrag{L}{$L$}
\psfrag{f}{$f$}
\psfrag{g}{$g$}
\psfrag{phieins}{$\phi_1$}
\psfrag{phizwei}{$\phi_2$}
\psfrag{phieinsvonz}{$\phi_1(z)$}
\psfrag{phizweivonz}{$\phi_2(z)$}
\psfrag{gvonz}{$g(z)$}
\psfrag{z}{$z$}
\psfrag{gammaeinsvongvonz}{$\gamma_1(g(z))$}
\psfrag{gammazweivonz}{$\gamma_2(z)$}
\includegraphics[width=\linewidth]{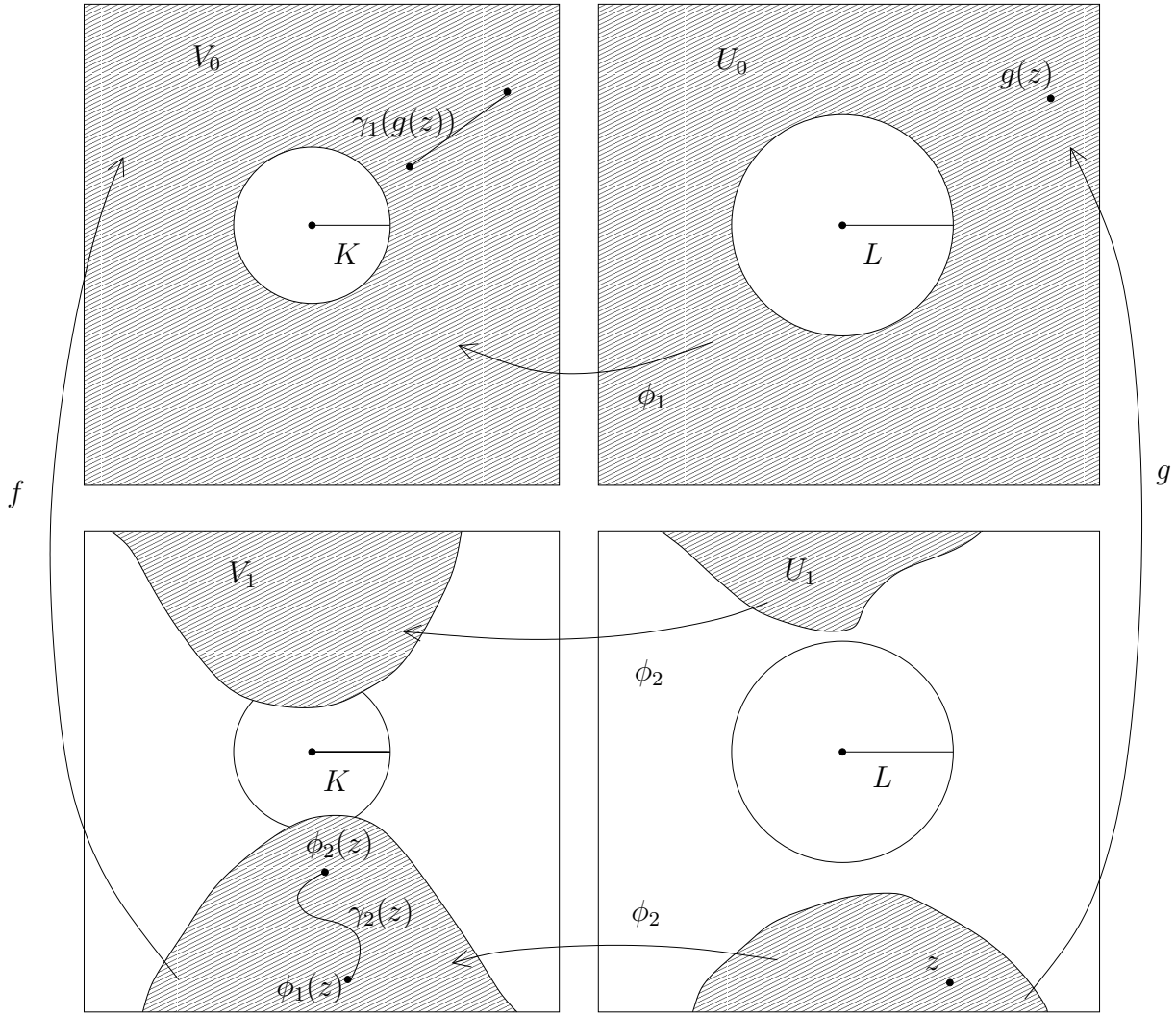}

\caption{The construction of the curve $\gamma_2(z)$ and the isomorphism $\phi_2$.}
\label{fig_semi}
\end{figure}

We want to construct a sequence of conformal isomorphisms
\begin{eqnarray*}
\phi_j:U_{j-1}\rightarrow V_{j-1}
\end{eqnarray*}
for $j\geq 1$ and with $\phi_0\equiv\id$, such that
\begin{eqnarray*}
f\circ\phi_{j+1} = \phi_j \circ g.
\end{eqnarray*}
We will proceed inductively. Since $\phi_0\equiv\id$, 
the map $\phi_1$ is given by the formula $\phi_1(z)=\mu z$. 
For a point $z\in U_0$ let $\gamma_1(z)$ be the straight line segment connecting 
$z$ and $\phi_1(z)$ (we can actually choose $\gamma_1$ to be any 
rectifiable curve which connects $z$ and $\phi_1(z)$ within the domain $V_0$). 
To define $\phi_2$ at a point $z\in U_1$, we consider the line segment 
$\gamma_1(g(z))\subset V_0$. By definition of $V_j$, 

\begin{eqnarray*}
f^{-1}\left(  \gamma_1(g(z))\right) \subset V_1.
\end{eqnarray*}

Since $f(\phi_1(z))=g(z)$, there is a preimage component 
$\gamma_2(z)$ of $\gamma_1(g(z))$, such that one endpoint 
of $\gamma_2(z)$ is $\phi_1(z)$. We define $\phi_2(z)$ 
to be the other endpoint of $\gamma_2(z)$ (see Figure \ref{fig_semi}).

Continuing inductively, we define the curve $\gamma_{j+1}(z)\subset V_j$ 
to be the pullback of $\gamma_j(g(z))\subset V_{j-1}$ under 
$f$ with one endpoint at $\phi_j(z)$, 
and we define $\phi_{j+1}(z)$ to be the other endpoint $\gamma_{j+1}(z)$.

We want to give some properties of the maps $\phi_j$. 
Since $f$ and $g$ are holomorphic and in particular continuous, 
each map $\phi_j$ is continuous as well. 
By induction, it also follows that each map $\phi_j$ is injective and surjective. 
Hence each map $\phi_j$ is a conformal isomorphism, mapping a 
component of $U_{j-1}$ onto a component of $V_{j-1}$. 

\begin{Theorem}
\label{thm_main_repeat}
The maps $\phi_j\vert_{\J(g)}$ converge uniformly with respect to the 
hyperbolic orbifold metric 
$\rho_f(z)\vert dz\vert$ on $\Or_f$ to a continuous surjective function
\begin{eqnarray*}
\phi:\J(g)\rightarrow\J(f)
\end{eqnarray*}
so that $f\circ\phi = \phi\circ g$. Moreover, $\phi(I(g))=I(f)$ and $\phi\vert_{I(g)}$ is a homeomorphism.
\end{Theorem}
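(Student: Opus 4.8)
The plan is to prove uniform convergence of the $\phi_j$ on $\J(g)$ using the uniform expansion established in Theorem~\ref{thm_uniform_expansion}, and then verify the remaining structural properties. First I would estimate the $\rho_f$-length of the connecting curves $\gamma_j(z)$. Since $\phi_1(z)=\mu z$ and $\gamma_1(z)$ is a straight segment from $z$ to $\mu z$ staying in $V_0$ (a region where $|z|>K$ and on which $\nu_f\equiv 1$), the asymptotic estimate $\rho_f(z)\leq\Oo\bigl(1/(|z|\log|z|)\bigr)$ from the proof of Theorem~\ref{thm_uniform_expansion} shows that $\mathrm{length}_{\rho_f}(\gamma_1(z))$ is bounded, uniformly in $z\in U_0$, by some constant $C_0<\infty$. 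Now $\gamma_{j+1}(z)$ is by construction a component of $f^{-1}(\gamma_j(g(z)))$ lying entirely in $V_j\subset\Ort_f$; since $f:\Ort_f\to\Or_f$ is a covering and uniformly expands the $\rho_f$-metric by a factor $E>1$, pulling back shrinks length, so $\mathrm{length}_{\rho_f}(\gamma_{j+1}(z))\leq E^{-1}\,\mathrm{length}_{\rho_f}(\gamma_j(g(z)))$. Iterating, $\mathrm{length}_{\rho_f}(\gamma_{j+1}(z))\leq E^{-j}\,\mathrm{length}_{\rho_f}(\gamma_1(g^j(z)))\leq E^{-j}C_0$ for every $z\in\J(g)$ (note $g^j(z)\in U_0$ when $z\in\J(g)$, so the bound $C_0$ applies). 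Hence $\rho_f\bigl(\phi_j(z),\phi_{j+1}(z)\bigr)\leq E^{-j}C_0$, and summing the geometric series shows $(\phi_j|_{\J(g)})$ is uniformly Cauchy in the complete metric $\rho_f$ on $\Or_f$; since $\J(f)\subset\Or_f$ is closed (being a closed subset of $S_f$) and complete, the limit $\phi:\J(g)\to\J(f)$ exists and is continuous by the uniform convergence.

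Next I would check $\phi$ lands in $\J(f)$, is surjective, and satisfies the functional equation. For a fixed $z\in\J(g)$ the images $\phi_j(z)$ all lie in $V_{j-1}$, which are nested shrinking neighbourhoods of $\J(f)$ (their intersection is exactly the set of points never escaping to the complement of $\{|w|>K\}$, i.e.\ $\J(f)$), so any limit point lies in $\J(f)$; alternatively $\phi_j(z)\in V_{j-1}$ and $\mathrm{dist}(\phi_j(z),\phi(z))\to 0$ forces $\phi(z)\in\bigcap_j\overline{V_j}=\J(f)$. Passing to the limit in $f\circ\phi_{j+1}=\phi_j\circ g$ and using continuity of $f$ gives $f\circ\phi=\phi\circ g$. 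For surjectivity: each $\phi_j:U_{j-1}\to V_{j-1}$ is a conformal isomorphism, so $\phi_j$ restricted to $\J(g)$ maps onto a set that is $\rho_f$-dense in $\J(f)$ (since $V_{j-1}$ shrinks to $\J(f)$), and because $\J(g)$ is compact-like — more precisely, because the convergence is uniform and $\J(g)$ carries a natural topology making $\phi$ a limit of continuous surjections between the boundary-like sets — the image $\phi(\J(g))$ is closed and dense in $\J(f)$, hence all of $\J(f)$. I would make this precise by fixing $w\in\J(f)$, choosing for each $j$ a preimage $w_j=\phi_j^{-1}(w')\in U_{j-1}$ for a point $w'\in V_{j-1}$ close to $w$, and extracting a convergent subsequence of the $w_j$ whose limit lies in $\J(g)$ and maps to $w$; the uniform length bounds guarantee the $w_j$ don't escape.

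For the escaping-set statement, observe that $\phi$ maps $I(g)$ into $I(f)$: if $z\in I(g)$ then $g^n(z)\to\infty$, and since $\phi$ semiconjugates $f$ to $g$ while $\phi(w)$ stays within bounded $\rho_f$-distance of $w$ in the region near $\infty$ (where $\rho_f$ is comparable to the cylindrical metric $|dw|/(|w|\log|w|)$, so bounded $\rho_f$-distance means comparable modulus), $f^n(\phi(z))=\phi(g^n(z))\to\infty$. Conversely $\phi$ is injective on $I(g)$: two points $z\neq z'$ in $I(g)$ lie in distinct fundamental domains eventually under iteration (disjoint-type combinatorics, as in Proposition~\ref{thm_esc_set_dis_type}), and one tracks that $\phi_j$ already separates them — each $\phi_j$ being a genuine conformal isomorphism of $U_{j-1}$ onto $V_{j-1}$, points in different components/fundamental domains of $U_{j-1}$ go to different components/fundamental domains of $V_{j-1}$, and this separation persists in the limit because the perturbations $\gamma_j$ have $\rho_f$-length $\to 0$. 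Finally, to get that $\phi|_{I(g)}$ is a homeomorphism onto $I(f)$ one constructs the inverse the same way with the roles of $f$ and $g$ swapped — pulling back connecting segments under $g$ instead of $f$, using that $g$ is also expanding (uniformly, being disjoint type) — obtaining maps $\psi_j:V_{j-1}\to U_{j-1}$ converging to $\psi:I(f)\to I(g)$ with $\psi\circ\phi=\mathrm{id}$ and $\phi\circ\psi=\mathrm{id}$ on the escaping sets. I expect the main obstacle to be the surjectivity and the homeomorphism-on-$I(g)$ claims rather than the convergence: one must carefully argue that the limit of the $\phi_j$ does not collapse or lose points, which requires controlling both that the curves $\gamma_j$ shrink (length $\to 0$, giving injectivity/properness in the limit) and that preimages under the $\phi_j^{-1}$ stay in a compact-enough part of $\J(g)$ to extract limits (giving surjectivity) — the uniform length bound $E^{-j}C_0$ is exactly what makes both work.
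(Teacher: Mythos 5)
Your Cauchy-sequence argument for the convergence is exactly the paper's: bound $\ell_f(\gamma_1)$ using the hyperbolic metric of $\C\setminus\overline{D_{K/2}(0)}$, pull back under the uniformly expanding covering $f:\Ort_f\to\Or_f$ to get $d_f(\phi_{j+1}(z),\phi_j(z))\leq C_0E^{-j}$, and sum. The functional equation and the statement $\phi(I(g))\subset I(f)$ (via $d_f(\phi(z),z)\leq C_0E/(E-1)$, so $\phi(z_n)\to\infty$ iff $z_n\to\infty$) also match. Two points, however, need repair.

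First, a minor but real error: the sets $V_j=f^{-j}(\{|z|>K\})$ are \emph{not} nested and $\bigcap_j\overline{V_j}$ is \emph{not} $\J(f)$ --- it is only the set of points whose entire forward orbit stays in $\{|w|>K\}$, which excludes, e.g., the points of $P_{\J}$ (these lie in $\{|z|<K/2\}$ by the choice of $K$). So your first argument that $\phi$ lands in $\J(f)$ does not work. The correct route, which you essentially have the ingredients for, is the one the paper takes: $\phi(I(g))\subset I(f)$ together with continuity and $\J(g)=\overline{I(g)}$, $\J(f)=\overline{I(f)}$ gives $\phi(\J(g))\subset\J(f)$; surjectivity then follows because $\phi$ extends continuously to $\infty$, so $\phi(\J(g))$ is closed and contains the dense set $I(f)$.

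Second, and this is the genuine gap: your injectivity argument on $I(g)$ rests on the premise that two distinct points of $I(g)$ eventually lie in distinct fundamental domains under iteration. That is false --- two distinct points on the same hair/dynamic ray of $g$ have the same itinerary of fundamental domains for all time, and this is precisely the hard case. Your "separation persists in the limit" argument therefore proves nothing for such pairs. The paper's argument runs the other way: \emph{assuming} $\phi(z)=\phi(\tilde z)$, the construction forces $g^j(z)$ and $g^j(\tilde z)$ into a common fundamental domain for all large $j$, and the estimate $d_f(\phi(w),w)\leq C_0E/(E-1)$ gives $d_f(g^j(z),g^j(\tilde z))\leq 2C_0E/(E-1)$ for all $j$; one then invokes a uniform expansion estimate for $g$ on fundamental domains (\cite[Lemma 2.7]{rempe_5}) to conclude that two distinct orbits sharing all fundamental domains must have unbounded mutual hyperbolic distance --- contradiction. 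Without some such expansion input for $g$ itself (not just for $f$), injectivity does not follow; this is also what ultimately makes $\phi|_{I(g)}$ a homeomorphism, so your proposed symmetric construction of an inverse $\psi$ would have to be carried out in detail rather than asserted.
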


\begin{proof}
With respect to the hyperbolic metric on $\Or_f$, 
we denote by $d_f(w_1, w_2)$ 
the distance between two points $w_1,w_2\in\Or_f$, 
and by $\ell_f(\gamma)$ the length of a rectifiable curve $\gamma\subset\Or_f$. 
Let  $z\in U_j$. Since $U_{j+1}\subset U_j$, 
both $\phi_j$ and $\phi_{j+1}$ are defined in a neighbourhood of $z$ and 
it follows from our construction that
\begin{eqnarray}
\label{phi_j}
d_f(\phi_{j+1}(z),\phi_j(z))\leq \ell_f(\gamma_{j+1}(z)).
\end{eqnarray}
Since for every point $z\in\Or_f$, 
\begin{eqnarray*}
\gamma_1(z)\subset\left(\C\backslash\overline{D_{K}(0)}\right) \subset\left(\C\backslash\overline{D_{\frac{K}{2}}(0)}\right) \subset\Or_f,
\end{eqnarray*}
 we obtain an upper bound for $\ell_f(\gamma_1)$ 
by computing its length with respect to the hyperbolic metric in $\C\backslash\overline{D_{\frac{K}{2}}(0)}$, which is given by 
$\left( \vert z\vert\left( \log\vert z\vert - \log (K/2)\right) 
\right) ^{-1}\vert dz\vert$. 
Hence
\begin{eqnarray*}
\ell_f(\gamma_1(z)) 
\leq \frac{1}{K} \log\left( \frac{\log\vert\lambda\vert}{\log\vert z\vert - \log (K/2)} +1\right)
\leq\frac{1}{K} \log\left(\frac{\log\vert\lambda\vert}{\log 2} +1\right) =: \len
\end{eqnarray*}
Recall that by Lemma \ref{expansion}, there is a constant $E>1$, 
such that $\Vert Df(z)\Vert_{\Or_f}\geq E$ holds for all $z\in\Ort_f$. 
Since $\gamma_{j+1}(z)\subset V_j\subset\Ort_f$ 
is obtained as a pullback of $\gamma_1(g^{j}(z))$ under the map $f^{j}$, 
it follows from equation (\ref{phi_j}) that
\begin{eqnarray*}
d_f(\phi_{j+1}(z),\phi_j(z))\leq \frac{\len}{E^{j}}.
\end{eqnarray*}
This means that the maps $\phi_j\vert_{\J(g)}$ from a Cauchy sequence, 
and since the orbifold metric is complete, there is a continuous limit function 
\begin{eqnarray*}
\phi:\J(g)\rightarrow\Or_f.
\end{eqnarray*}
Note that $\phi$ necessarily satisfies
\begin{eqnarray}
\label{eqn_escape}
d_f(\phi(z),z)\leq\sum_{j=0} ^{\infty} d_f(\phi_{j+1}(z), \phi_j(z))\leq\sum_{j=0} ^{\infty}\len\cdot\frac{1}{E^{j}} 
=\len\cdot\frac{E}{E-1}
\end{eqnarray}
as well as 
\begin{align}
\label{eqn_fe}
f^n(\phi(z)) = \phi(g^n(z))
\end{align}
for all $n\in\N$ and all $z\in\J(g)$.

We want to derive some properties of the limit function $\phi$. 
By equation (\ref{eqn_escape}), 
\begin{align}
\label{eqn_phi_infty}
\phi(z_n)\rightarrow\infty\quad\text{if and only if}\quad z_n\rightarrow\infty,
\end{align}
so together with equation (\ref{eqn_fe}) this implies that $\phi(I(g))\subset I(f)$.
In particular, it follows that $\phi(\J(g))\subset\J(f)$, since $\J(g)=\overline{I(g)}$
and $\J(f)=\overline{I(f)}$ \cite{eremenko_1}. 
Now let $w\in I(f)$. Then $w\in V_j$ 
for all sufficiently large $j$, so we can consider the sequence 
$z_j:= \phi^{-j}(w)$. Let $z$ be an accumulation point of the points $z_j$.
Note that by relation (\ref{eqn_phi_infty}), $z\neq\infty$. 
Let $z_{n_j}$ be a subsequence of $z_j$ that converges to $z$.
Then, 
\begin{eqnarray*}
\phi(z) = \phi(\lim_{n_j\rightarrow\infty} z_{n_j})=\lim_{n_j\rightarrow\infty}\phi( z_{n_j}) = w,
\end{eqnarray*}
showing that $\phi: I(g)\rightarrow I(f)$ is surjective. 

Before we show that $\phi\vert_{I(g)}$ is injective, 
let us recall that $U_1=g^{-1}\left(\{z: \vert z\vert>L\}\right)$ is a 
countable collection of simply-connected domains, so-called tracts, on which 
$g$ acts as a covering map. Now let 
$\alpha\subset U_0\setminus U_1$ be a curve connecting 
$\{z: \vert z\vert =L\}$ with $\infty$. Then every component of 
$g^{-1}(U_0\setminus\alpha)$ is a simply-connected unbounded subdomain of $U_1$, 
called a \emph{fundamental domain}, and the restriction of $g$ to any such domain $F$ is 
a conformal map. 
Now let $z,\tilde{z}\in I(g)$ be two points such 
that $\phi(z)=\phi(\tilde{z})=:w$.
By definition, $\phi_j(z),\phi_j(\tilde{z})\rightarrow w$ 
and it follows from the inductive definition of the maps $\phi_j$
that for every sufficiently large $j$, 
there exists a fundamental domain $F_j$ 
such that $g^j(z),g^j(\tilde{z})\in F_j$.
On the other hand, it follows from equation (\ref{eqn_fe})
that $\phi(g^j(z))=\phi(g^j(\tilde{z}))$ 
holds for all $j\in\N$. Furthermore, equation (\ref{eqn_escape}) implies that 
\begin{eqnarray*}
d_f(g^j(z),g^j(\tilde{z}))\leq d_f(g^j(z),\phi(g^j(z))) + 
d_f(\phi(g^j(\tilde{z})),g^j(\tilde{z}))\leq 2\len\cdot\frac{E}{E-1}.
\end{eqnarray*}
By standard expansion estimates (see e.g. \cite[Lemma 2.7]{rempe_5}),
the distance between $g^j(z)$ and $g^j(\tilde{z})$ 
must be unbounded, unless the points $z$ and $\tilde{z}$ are equal, 
implying that $\phi$ is injective.

Observe that by equation (\ref{eqn_phi_infty}), 
$\phi$ can be extended (sequentially) continuously to a map $\widehat{\phi}:\J(g)\cup\lbrace\infty\rbrace\rightarrow\J(f)\cup\lbrace\infty\rbrace$ with $\widehat{\phi}(\infty)=\infty$. 
The set $\widehat{\phi}\left(\J(g)\cup\lbrace\infty\rbrace\right)$ is compact since
it is the continuous image of a compact set.
Furthermore, $\widehat{\phi}(\J(g))=\phi(\J(g))$ 
is necessarily closed. So
\begin{eqnarray*}
I(f)=\phi(I(g))\subset\phi(\J(g))\subset\J(f)=\overline{I(f)}
\end{eqnarray*}
and as $\phi(\J(g))$ is closed, it follows that $\phi(\J(g))=\J(f)$, hence $\phi$ is surjective.
\end{proof}

Since the restriction of the map $\phi$ in Theorem \ref{thm_main_repeat}
to the escaping set of the disjoint type map is a homeomorphism,
we obtain the following result 
as an immediate consequence of Theorem \ref{thm_main_repeat} 
and Theorem \ref{thm_esc_set_dis_type}.

\begin{Corollary}
The escaping set of a strongly subhyperbolic map is not connected.
\end{Corollary}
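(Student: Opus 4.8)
The plan is to deduce this immediately from the two main ingredients already established: the construction of the semiconjugacy in Section \ref{sec semiconjugacy} and the disconnectedness of the escaping set of a disjoint type map (Proposition \ref{thm_esc_set_dis_type}). Concretely, let $f$ be strongly subhyperbolic. First I would recall that the argument preceding Theorem \ref{thm_main_repeat} produces a specific parameter $\mu=K/L$ such that $g:=g_{\mu}$, $g_{\mu}(z)=f(\mu z)$, is of disjoint type: this is exactly where the choice of $L\geq K$ with $f^{-1}(\{|z|>L\})\subset\{|z|>K+1\}$ together with Proposition \ref{prop_dt}$(3)$ is used. Thus there is at least one disjoint type map in the family $\{z\mapsto f(\lambda z):\lambda\in\C\}$, so the hypotheses of Theorem \ref{thm_main_repeat} are genuinely met.

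Next I would invoke Theorem \ref{thm_main_repeat} directly: the limit map $\phi$ restricts to a homeomorphism $\phi\vert_{I(g)}:I(g)\to I(f)$. Since connectedness is a topological invariant preserved by homeomorphisms, $I(f)$ is connected if and only if $I(g)$ is. But Proposition \ref{thm_esc_set_dis_type} asserts that $I(g)$ is disconnected, because the escaping set of a disjoint type map is contained in the union of its fundamental domains, each of which meets $I(g)$ (every fundamental domain contains a preimage of an escaping point, which exists by Eremenko's theorem), so singling out one fundamental domain splits $I(g)$ into two disjoint nonempty relatively open pieces. Therefore $I(f)$ is disconnected as well.

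There is essentially no obstacle here beyond bookkeeping, since all the substantive work — the uniform expansion on $\Ort_f$ (Theorem \ref{thm_uniform_expansion}), the Cauchy estimate $d_f(\phi_{j+1}(z),\phi_j(z))\leq \len/E^{j}$, the injectivity of $\phi$ on $I(g)$ via standard expansion estimates, and the surjectivity onto $I(f)$ — has already been carried out in the proof of Theorem \ref{thm_main_repeat}. The one point worth stating explicitly is that the existence of a disjoint type representative in the parameter family is not an extra assumption but is supplied by the construction of $\mu$; once that is noted, the corollary is a one-line consequence. I would phrase the proof in two sentences: by the construction in Section \ref{sec semiconjugacy} there is a disjoint type map $g$ with $\phi\vert_{I(g)}:I(g)\to I(f)$ a homeomorphism (Theorem \ref{thm_main_repeat}), and $I(g)$ is disconnected by Proposition \ref{thm_esc_set_dis_type}; hence so is $I(f)$.
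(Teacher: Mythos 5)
Your proposal is correct and follows exactly the paper's route: the author likewise deduces the corollary immediately from the homeomorphism $\phi\vert_{I(g)}:I(g)\to I(f)$ of Theorem \ref{thm_main_repeat} together with Proposition \ref{thm_esc_set_dis_type} on the disconnectedness of the escaping set of a disjoint type map. Your explicit remark that the disjoint type representative $g=g_{\mu}$ is supplied by the construction preceding Theorem \ref{thm_main_repeat} is a helpful piece of bookkeeping that the paper leaves implicit.
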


\begin{Remark}
Dierk Schleicher kindly pointed out that the escaping set 
of the cosine map $z\mapsto\pi\sinh z$ mentioned in the introduction
is obviously disconnected: the imaginary axis consists of points with bounded
orbits and it disconnects the escaping set 
(for details on this special function, see Appendix A). 
\end{Remark}

To state our next corollary,
we need to introduce the notion of a dynamic ray.

\begin{Definition}[Dynamic rays and ray tails] 
\label{defn_ray}
A \emph{ray tail} of a transcendental entire map $f$ is an injective curve
\begin{eqnarray*}
g:[t_0,\infty)\rightarrow I(f)
\end{eqnarray*}
(where $t_0>0$) such that for each $n\in\N$, 
$\lim_{t\rightarrow\infty} f^{n}(g(t))=\infty$ and such that, as $n\to\infty$,
$f^n(g(t))\rightarrow\infty$  uniformly in $t$.

A \emph{dynamic ray} of $f$ is then a maximal injective curve 
 $g:(0,\infty)\rightarrow I(f)$ such that 
$g\vert_{[t_0,\infty)}$ is a ray tail for every $t_0>0$. 
\end{Definition}

In terms of dynamic rays, our main result implies the following 
topological description of the Julia set of certain strongly subhyperbolic maps.

\begin{Corollary}
\label{cor_pcb}
Let $f=f_1\circ\dots\circ f_n$ be a strongly subhyperbolic map, 
where every $f_i$ has finite order and a bounded set of singular values. 

Then $\J(f)$ is a pinched Cantor bouquet, consisting of 
dynamic rays of $f$ and their endpoints.
In particular, all dynamic rays of $f$ land and every point in $\J(f)$ is either
on a dynamic ray or the landing point of a dynamic ray of $f$.
\end{Corollary}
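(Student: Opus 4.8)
The plan is to deduce Corollary~\ref{cor_pcb} from Theorem~\ref{thm_maintheorem}, combined with the known description recalled in the introduction of the Julia set of a disjoint type map that is a finite composition of finite-order maps with bounded singular sets: such a Julia set is a Cantor bouquet, and, in the dynamical form of that result, it is exactly the union of the map's dynamic rays and their endpoints, arranged so that each hair of the underlying straight brush consists of one dynamic ray with its (finite) endpoint attached.

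First I would fix a non-zero $\lambda\in\C$ small enough that $g(z):=f(\lambda z)$ is of disjoint type; such $\lambda$ exists by the Remark after Theorem~\ref{thm_maintheorem}. Writing $f=f_1\circ\dots\circ f_n$ and $h_n(z):=f_n(\lambda z)$, we have $g=f_1\circ\dots\circ f_{n-1}\circ h_n$, and precomposing $f_n$ with the affine automorphism $z\mapsto\lambda z$ changes neither its order of growth nor, beyond an inclusion, its singular set; hence $g$ is again a finite composition of finite-order maps with bounded singular sets. By the result quoted above, $\J(g)$ is therefore a Cantor bouquet, equal to the union of the dynamic rays of $g$ and their endpoints. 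Since every dynamic ray is by definition contained in $I(g)$, it follows that $\J(g)\setminus I(g)$ consists only of ray endpoints.

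Next I would apply Theorem~\ref{thm_maintheorem} to this $g$, obtaining a continuous surjection $\phi\colon\J(g)\to\J(f)$ with $f\circ\phi=\phi\circ g$ (equation~(\ref{eqn_fe})) that restricts to a homeomorphism $I(g)\to I(f)$; from the construction one also knows that $\phi$ extends continuously over $\infty$, that $\phi(z_k)\to\infty$ exactly when $z_k\to\infty$, and hence (using the functional equation together with this last equivalence) that $\phi^{-1}(I(f))=I(g)$. Extending $\phi$ to $\widehat\phi\colon\J(g)\cup\{\infty\}\to\J(f)\cup\{\infty\}$ with $\widehat\phi(\infty)=\infty$, we get a continuous surjection between compact Hausdorff spaces, hence a closed, hence quotient, map; restricting it to the saturated open subset $\J(g)$ shows that $\phi$ itself is a quotient map. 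The fibre relation $\sim$ of $\phi$ is closed in $\J(g)\times\J(g)$ because $\J(f)$ is Hausdorff, and it identifies only endpoints: if $\phi(z)=\phi(z')$ with $z\neq z'$, then $\phi(z)\notin I(f)$, for otherwise $z,z'\in\phi^{-1}(I(f))=I(g)$ and the injectivity of $\phi|_{I(g)}$ would give $z=z'$; consequently $z,z'\in\J(g)\setminus I(g)$, which by the previous paragraph consists of endpoints. Thus $\J(f)\cong\J(g)/{\sim}$ is the quotient of a Cantor bouquet by a closed equivalence relation defined on its endpoints, i.e.\ a pinched Cantor bouquet.

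Finally, for the ray description: each hair of $\J(g)$ is a curve $\gamma\colon[0,\infty)\to\J(g)$ with endpoint $\gamma(0)$ and tail $\gamma|_{(0,\infty)}$ a dynamic ray of $g$ inside $I(g)$. Because $\widehat\phi$ is continuous at $\infty$, large Euclidean modulus is pushed forward uniformly by $\phi$ (and by $\phi^{-1}$ on $I(f)$); combined with $f^n\circ\phi=\phi\circ g^n$ this shows that the uniform-escape property of the tails $g^n\circ\gamma$ transfers to $f^n\circ(\phi\circ\gamma)$. Since $\phi$ restricts to a homeomorphism $I(g)\to I(f)$, the curve $\phi\circ\gamma|_{(0,\infty)}$ is again a maximal injective escaping curve with this property, hence a dynamic ray of $f$, and by continuity of $\phi$ it lands at $\phi(\gamma(0))$. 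As the hairs exhaust $\J(g)$ and $\phi$ is onto, $\J(f)$ is the union of these dynamic rays and their landing points; running the same argument with $\phi^{-1}|_{I(f)}$ shows that, conversely, every dynamic ray of $f$ arises in this way and therefore lands. I do not expect a deep obstacle: once Theorem~\ref{thm_maintheorem} and the cited structure of $\J(g)$ are available, the corollary is largely formal. The points needing care are that the Cantor-bouquet statement must be invoked in its dynamical form (the hairs are genuine dynamic rays with endpoints in $\C$, so that ``dynamic ray of $f$'' on the image side is meaningful), and the verification that $\phi$ is a quotient map identifying precisely certain pairs of endpoints --- both handled above via the one-point compactification and the identity $\phi^{-1}(I(f))=I(g)$.
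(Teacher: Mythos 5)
Your proposal is correct and follows essentially the same route as the paper: fix a disjoint-type map $g(z)=f(\lambda z)$, invoke the result of Rottenfu\ss er--R\"uckert--Rempe--Schleicher that $\J(g)$ is a Cantor bouquet consisting of dynamic rays of $g$ and their endpoints, and push this structure forward through the semiconjugacy $\phi$ of Theorem~\ref{thm_maintheorem}. The paper's own proof is much terser (it simply says ``since $\phi$ is surjective, $\J(f)$ is a pinched Cantor bouquet''), whereas you correctly supply the details it leaves implicit --- that $\phi$ is a quotient map via the one-point compactification, that the fibres are nontrivial only over non-escaping points because $\phi^{-1}(I(f))=I(g)$, and that images of rays of $g$ are genuine dynamic rays of $f$.
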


Recall that by a pinched Cantor bouquet we mean  
a quotient of a Cantor bouquet by a closed equivalence relation on its endpoints.
Note that Corollary \ref{cor_pcb} implies Corollary \ref{cor1} from the introduction.

\begin{proof}
Let $g$ and $\phi:\J(g)\to\J(f)$ be maps as in Theorem \ref{thm_main_repeat}. 
By \cite[Theorem 4.7, Theorem 5.10]{rrrs}, $\J(g)$ is an
``absorbing brush'' 
(this implies that every connected component $C$ of $\J(g)$ 
is a closed arc to infinity, and all points of $C$ except possibly
the finite endpoint escape).
In fact, $\J(g)$ is homeomorphic to a straight brush in the sense of 
\cite{aarts_oversteegen} (see Remark in \cite[p. 15]{rrrs}), 
and hence $\J(g)$ is a Cantor bouquet.

By \cite[Theorem 4.7]{rrrs}, $\J(g)$ consists 
of dynamic rays of $g$ 
and their endpoints. 
Since $\phi$ is surjective, $\J(f)$ is a pinched Cantor bouquet, 
consisting of
dynamic rays of $f$ and their endpoints.   
\end{proof}

\begin{Remark}
The statement that the absorbing brush in \cite[Theorem 4.7]{rrrs}
is homeomorphic to a straight brush in the sense of \cite{aarts_oversteegen}
can be deduced, for instance, using a topological characterization like the one 
given in \cite[Theorem 3.11]{aarts_oversteegen}.
However, we will not state the details here since it 
would require a lot vocabulary from point-set topology which would be of no 
further use in this article.
\end{Remark}

If $f$ is a function as in Corollary \ref{cor_pcb}, then 
one can encode the ``pinching'' of the Cantor bouquet (which equals $\J(f)$)
combinatorially using \emph{itineraries}. 
We will elaborate this explicitly in the case of the map 
$z\mapsto \pi\sinh z$ in the following section. 
However, such a concept can be developed 
in the more general setting of Corollary \ref{cor_pcb}; this is 
contained in the author's thesis. 
For more information on itineraries of exponential and cosine maps, 
see e.g. \cite{schleicher_zimmer,schleicher}.

\section*{Appendix A: Model of the dynamics of a map \texorpdfstring{$f$}{f} with
\texorpdfstring{$\J(f)=\C$}{J(f)=C}}

This section is dedicated to the description of the topological dynamics of the function 
\begin{align*}
f(z):=\pi\sinh z.
\end{align*}
%
We will define a ``simple'' model consisting of a topological space 
$\cl{X}$ and a map $\M:\cl{X}\to\cl{X}$ such that 
if $g$ is any map of disjoint type in the family 
$g_{\lambda}: z\mapsto\lambda\sinh z$ then 
\begin{itemize}
\item $\J(g)$ is homeomorphic to $\cl{X}$, and
\item $\M\vert_{\cl{X}}$ is conjugate to $g\vert_{\J(g)}$.
\end{itemize}

We will transfer the ideas from \cite{rempe_1}, where such a
model was constructed for exponential maps whose singular value 
belongs to some attracting basin. 
The adoption of \cite{rempe_1} to the maps we are interested in 
is particularly simple since  
in left and right half-planes, sufficiently far away from 
the imaginary axis, any map $g_{\lambda}$ with $\lambda>0$
is essentially the same (i.e., up to a constant factor)
as $z\mapsto\e^{-z}$ and $z\mapsto\e^z$, respectively.
For this reason, we will skip the details and refer, for further consideration,
to \cite{rempe_1}
as well as the extensive work on dynamics of cosine maps 
by Rottenfu\ss er and Schleicher \cite{rottenfusser_schleicher}. 

Once we have constructed such a model for a disjoint 
type map $g\in\{ g_{\lambda}\}$, Theorem \ref{thm_main_repeat}
tells us that there is a semiconjugacy between $g$ 
and $f$ on their Julia sets, and hence 
also between the model map $\M$ and $f$.
The combinatorial dynamics of $f$ on $\J(f)$ 
was already established in \cite{schleicher,schleicher_2} 
and we will summarize here the required results.

\subsection*{Dynamics within the one-parameter family}
Let us consider the family  
$g_{\lambda}(z):=\lambda\sinh z$ with $\lambda>0$ (hence $f = g_{\pi}$).
The critical values of $g_{\lambda}$ are $\pm\lambda i$. 
Every map $g_{\lambda}:\R\to\R$ is a homeomorphism with $g_{\lambda}(0)=0$
and $\R\setminus\{ 0\}\subset I(g_{\lambda})$. 
Furthermore, $g_{\lambda}(i\R)\subset [-\lambda i,\lambda i]$.

Both critical values of $f$ 
are mapped by $f$ to the repelling fixed point $0$.
Since $f$ (as well as every other $g_{\lambda}$) 
has no asymptotic values, the postsingular set 
of $f$ equals $\{\pm\pi i,0\}$. Hence $f$ is postsingularly finite 
and strongly subhyperbolic. 
Furthermore, $\J(f)=\C$.

For $\lambda>0$ chosen sufficiently small, the origin
is an attracting fixed point and the subinterval
$[-\lambda i,\lambda i]$ of the imaginary axis 
is mapped into itself and thus belongs 
to the immediate basin of attraction of $0$. 
Hence by choosing $\lambda$ sufficiently small, we obtain 
a map $g_{\lambda}$ of disjoint type (see Proposition \ref{prop_disjoint_type}).
From now on, we will fix $\lambda_0>0$ such that the corresponding map 
$g_{\lambda_0}=:g$ is of disjoint type.

Note that for every $n\in\Z$,  
the horizontal line 
\begin{align*}
L_n:=\{z: \Ima z = (n+1/2)\pi\} 
\end{align*}
is mapped by $g$ (or any other $g_{\lambda}$ with $\lambda\in\R$) 
to $i\R\setminus \left[-\lambda_0 i,\lambda_0 i\right]$, 
hence every point $z\in\J(g)$ is contained 
in one the horizontal half-strips 
\begin{eqnarray*}
S_{n_L}&:=&\{z: \Rea z<0, \Ima z\in ((n-1/2)\pi,(n+1/2)\pi)\} \text{ or } \\
S_{n_R}&:=&\{z: \Rea z>0, \Ima z\in ((n-1/2)\pi,(n+1/2)\pi)\}.
\end{eqnarray*}

Note that the restriction of $g$ 
(or any other $g_{\lambda}$ with $\lambda\in\R$)
to any of the half-strips is a conformal isomorphism onto its image
which is the left or right half-plane.

\subsection*{Topological model}

Let $\Exad:= (\Z_L\cup\Z_R)^{\N}$
be the space of infinite sequences of elements in $\Z_L\cup\Z_R$,
where $\Z_L:=\{\dots ,-1_L, 0_L, 1_L, \dots\}$ and 
$\Z_R:=\{\dots ,-1_R, 0_R, 1_R, \dots\}$ are two 
disjoint copies of $\Z$.
By the previous argument, 
we can assign to a point $z\in\J(g)$ a unique sequence 
$\ul{s}=s_0 s_1 \ldots \in\Exad$ defined by
$g^n(z)\in S_{s_n}$.
We will call such a sequence the \emph{external address} of $z$.
For every $i\in\Z$ we define $\vert i_L\vert:=\vert i\vert=:\vert i_R\vert$.
Furthermore, since $\J(g)$ consists of (asymptotically horizontal) 
dynamic rays and their endpoints \cite[Theorem 4.1]{rottenfusser_schleicher}, 
our model $\cl{X}$ should be a subset of the space 
\begin{align*}
\Exad\times [0,\infty).
\end{align*}

Note that the relation $\dots i_L<i_R<(i+1)_L<\dots$ 
defines an order on $\Exad$. 
Thus $\Exad\times [0,\infty)$ is equipped with the product topology 
of the topology on $\Exad$ (induced by the order relation) 
and the standard topology on $\R$.

Let $(\ul{s},t)$ be a point in $\Exad\times [0,\infty)$. 
We should think of the first entry 
$s_0$ in $\ul{s}$ as the imaginary part of the point 
(or its height corresponding to our horizontal strips), 
together with the information whether
it is lying left or right from the imaginary axis. 
The second entry $t$ should 
be thought of as the absolute value of the real part of the point. 
Hence it is helpful to think of a point $(\ul{s},t)\in\Exad\times [0,\infty)$
in its ``complexified'' version $C(\ul{s},t):=t+2\pi i s_0$.
Let us denote by $T(\ul{s},t):=t$ the projection onto the second coordinate.
We can now define our model map to be
\begin{align*}
\M:\Exad\times [0,\infty)\to\Exad\times [0,\infty),\; (\ul{s},t)\mapsto (\sigma(\ul{s}), F(t) - \pi\vert s_1\vert),
\end{align*}
where $\sigma$ denotes the one-sided shift map and $F(t):=\e^t -1$
denotes the standard model map for exponential growth.

Recall that the maps we consider behave like the exponential in
each of the halfplanes. 
The essential characteristic of our model map now is that as for 
exponential maps, the size of the image $\vert C(\M(\ul{s},t))\vert$ 
of a point $(\ul{s},t)$ is 
roughly the exponential of its real part. More precisely,
$F(t)/\sqrt{2} \leq \vert C(\M(\ul{s},t))\vert\leq F(t)$
whenever $T(\ul{s},t)\geq0$.
Hence we define the model sets $\cl{X}$ and $X$ to be
\begin{eqnarray*}
\cl{X}&:=&\{ (\ul{s},t)\in\Exad\times [0,\infty): T(\M^n(\ul{s},t))\geq 0\text{ for all }n\geq 0\} \text{ and }\\
X&:=&\{ (\ul{s},t)\in\cl{X}: T(\M^n(\ul{s},t))\to\infty\text{ as }n\to\infty\}.
\end{eqnarray*}

By  \cite[Observation 3.1]{rempe_1}, $\cl{X}$ is homeomorphic to a straight brush. 
In particular, for every external address
$\ul{s}$ there exists a unique $t_{\ul{s}}\in [0,\infty]$ such that 
$\{ t\geq 0: (\ul{s},t)\in \cl{X}\}=[t_{\ul{s}},\infty)$.
We denote by $E(\cl{X}):=\{(\ul{s},t_{\ul{s}})\}$ the 
set of \emph{endpoints} of $\cl{X}$.

By iterating forward under the model map $\M$ and backwards under $g$, 
we obtain a sequence of maps that converges to a homeomorphism 
$\Phi:\cl{X}\to\J(g)$ such that 
\begin{align*}
\Phi\circ \M(z)= g\circ\Phi(z)
\end{align*}
for all $z\in\cl{X}$. 
The key argument for such a limit to exist is again uniform hyperbolic contraction 
of the map $g$ and the fact that the mapping behaviour of the model map
reflects that of $g$. 
(For a precise statement see \cite[Section 3]{rempe_1} or 
\cite[Proposition 3.3]{rottenfusser_schleicher}.)
A proof of the above statement is essentially the same as in 
the case of exponential maps in \cite[Theorem 9.1]{rempe_1}, 
which is why we skip the details here.
A proof can also be derived by essentially the same estimates
as given in the proof of Theorem \ref{thm_main_repeat}.
 
By Theorem \ref{thm_main_repeat} there is a surjective map 
$\phi:\J(g)\to \J(f)$ such that $f(\phi(z))=\phi(g(z))$ holds 
for all $z\in\J(g)$. Moreover, $\phi$ restricts to a homeomorphism 
between $I(g)$ and $I(f)$.
As already mentioned, every point $z\in I(f)$ escapes 
within the strips $S_{s_i}$ with $s_i\in\Z_L\cup\Z_R$,
since the forward orbit of any point in the boundary of 
the strips has a bounded orbit. 
Recall from the proof of Theorem \ref{thm_main_repeat} that
by choosing the inverse branches of the maps $f^n$ 
appropriately,  
the conjugacy $\phi$ relates
the escaping points of $g$ and $f$ with respect to the
combinatorics in terms of their external addresses.
From Corollary \ref{cor_pcb}, we obtain that 
$\M$ projects to a function $\widetilde{\M}$ on $\widetilde{X}:=\cl{X}/\sim_p$,
where $\sim_p$ is an equivalence relation on the set 
$E(\cl{X})$ of endpoints of $\cl{X}$, 
such that $\widetilde{\M}:\widetilde{X}\to\widetilde{X}$ is 
conjugate to $f:\J(f)\to\J(f)$.
The equivalence relation $\sim_p$ tells us which dynamic rays are being ``pinched''.
We will now describe $\sim_p$
explicitly using the results from \cite{schleicher,schleicher_2}. 

\subsection*{Combinatorial description}
\begin{figure}
\centering
\includegraphics[width=\linewidth]{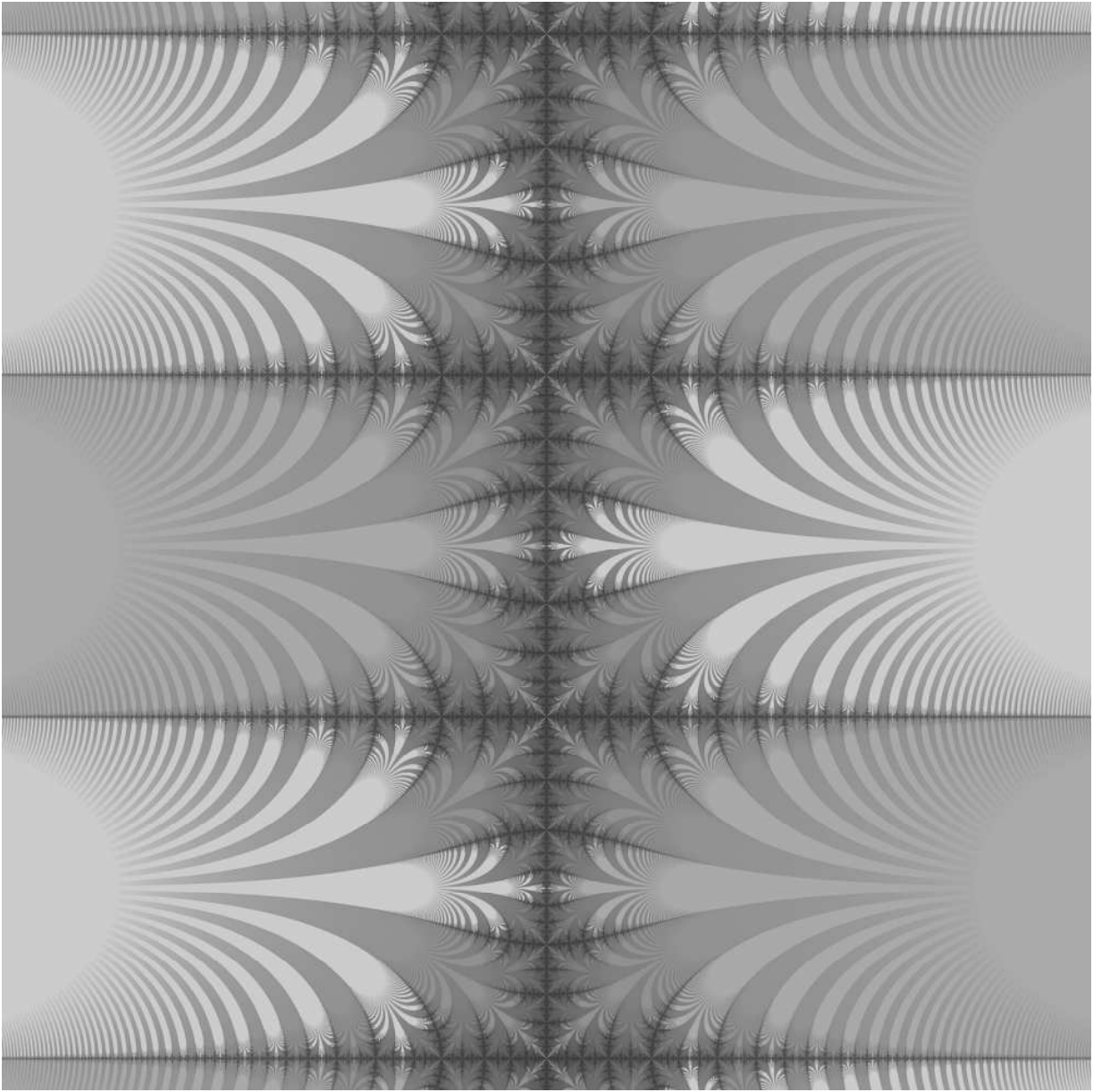}
\caption{The Julia set of the map $f(z)=\pi\sinh z$, showing its structure of a pinched 
Cantor bouquet. This picture was kindly provided by 
Arnaud Ch\'{e}ritat.}
\end{figure}

For every $n\in\Z$ we set
\begin{eqnarray*}
U_{(n,0)}&:=&\{z: \Rea z<0, \Ima z\in ((2n\pi,2(n+1)\pi)\} \text{ and } \\
U_{(n,1)}&:=&\{z: \Rea z>0, \Ima z\in (2n\pi,2(n+1)\pi)\}.
\end{eqnarray*}

One can easily see that the restrictions
$f: U_{(n,0)}\to \C\setminus (\R^{+}\cup [-\pi i,\pi i])$ and 
$f: U_{(n,1)}\to \C\setminus (\R^{-}\cup [-\pi i,\pi i])$
are conformal isomorphisms. 
We will call a sequence $(n_0,k_0) (n_1, k_1) \dots \in(\Z\times\{0,1\})^{\N}$ 
an \emph{itinerary}.
If $\gamma$ is a dynamic ray such that for every $i\geq 0$ there exists 
a domain $U_{(n_i,k_i)}$ with $f^i(\gamma)\subset U_{(n_i,k_i)}$ then
we assign to $\gamma$ the (well-defined) itinerary
$\itin(\gamma)=(n_0,k_0) (n_1,k_1)\dots$.
Since a dynamic ray of $f$ is either contained in some half-strip $U_{(n,k)}$ 
or is completely contained in the boundary of such a domain, it follows that 
an itinerary cannot be assigned to a ray $\gamma$ 
if and only if there is $n\geq 0$ such that $f^n(\gamma)$ equals $\R^{+}$
or $\R^{-}$, or equivalently, if $s_{n+j}\equiv 0_R$ or $0_L$ for all 
$j\geq 0$, where $\ul{s}=s_0 s_1 \dots$ is the external address of $\gamma$.
This means that to every external address $\ul{s}$ in 
\begin{align*}
\Stp_{+}^{\N}:=\{\ul{s}: t_{\ul{s}}<\infty\}
\setminus \{ \ul{s}:  
s_{n+j}\equiv 0_R\text{ or } 0_L \text{ for some }n\geq 0\text{ and all }j\geq 0\}
\end{align*}
we can assign a unique itinerary 
$\itin(\ul{s}):=\itin(\gamma_{\ul{s}})$.
Let us first comment on those external addresses
that belong to 
\begin{align*}
\Stp_{-}^{\N}:=\{\ul{s}: t_{\ul{s}}<\infty\}\setminus\Stp_{+}^{\N}.
\end{align*}
The mapping behaviour of the map $f$ is fairly simple and allows us 
to describe completely all tuples and quadruples of 
external addresses in $\Exad_{-}$
for which the respective dynamic rays land together.
For instance, for all addresses $\ul{s}^i$ that belong 
to either the left of right quadruple
\begin{align*}
s_0 \dots s_j\begin{cases}
\;\;\;(2m)_R \;\;\;(2n+1)_R\; 1_L\; \cl{0_R}\\
\;\;\;(2m)_R \;\;\;\;\;\;(2n)_R \;\;\;\;1_R\; \cl{0_L}\\
(2m+1)_R (2n+1)_L\; 1_R\; \cl{0_L}\\
(2m+1)_R\;\;\; (2n))_L\;\;\; 1_L\; \cl{0_R}
             \end{cases} 
\qquad
 s_0 \dots s_j\begin{cases}
(2m+1)_L(2n+1)_R\; 1_L\; \cl{0_R}\\
(2m+1)_L \;\;\;(2n)_R \;\;\;\;1_R\; \cl{0_L}\\
\;\;\;(2m)_L \;\;\; (2n+1)_L\; 1_R\; \cl{0_L}\\
\;\;\;(2m)_L\;\;\;\;\;\; (2n))_L\;\;\; 1_L\; \cl{0_R}
             \end{cases} 
\end{align*}
where $m\in\Z$ and $n\geq 0$, we define 
$(\ul{s}^i,t_{\ul{s}^i})\sim_p (\ul{s}^j,t_{\ul{s}^j})$. 
We will not list the remaining 
combinations since there are not so many of them and each one  
is easy to determine using elementary computations.

The remaining task is to determine those
external addresses in $\Stp_{+}^{\N}$ such that the corresponding dynamic rays
land together. Let $\gamma$ be a dynamic ray with external address
$\ul{s}\in\Exad_{+}$ and let $w$ be its landing point.  
Then either $g$ is the only dynamic ray that lands at $w$ 
or there is exactly one more such dynamic ray \cite{schleicher_2};
the latter case occurs if and only if $w$ is eventually mapped 
into $[\pi i,-\pi i]$ (and remains there without ever being mapped to $0$). 
Let $\ul{s},\tilde{\ul{s}}$ be the external addresses of two dynamic rays landing 
at the same point $w$. 
It follows that 
$\itin(\ul{s})$ and $\itin(\tilde{\ul{s}})$ must be of the form 
\begin{align}
\label{eqn_itin}
(n_0,k_0) \dots (n_j,k_j)\begin{cases}
(n_{j+1},\;\;k_{j+1}\;\;) (0,\;\;k_{j+2}\;) (0,\;\;k_{j+3}\;\;)\dots\\
(n_{j+1},\!1\!\!-\!k_{j+1}) (0,\!1\!\!-\!k_{j+2}) (0,\!1\!\!-\!k_{j+3})\dots
                         \end{cases}
\end{align}
or with $-1$ instead of $0$.


On the other hand, it follows from \cite[Lemma 5]{schleicher} and 
elementary computations that two dynamic rays with itineraries 
as in equation (\ref{eqn_itin}) do indeed land together: 
such dynamic rays have a forward image that lands in the interval 
$[-\pi i,\pi i]$ and its landing point is never mapped to $0$.
So let $\ul{s},\tilde{\ul{s}}\in\Stp_{+}^{\N}$. It follows that 
$(\ul{s},t_{\ul{s}})\sim_p (\tilde{\ul{s}},t_{\tilde{\ul{s}}})$ if and only if
$\itin(\ul{s})$ and $\itin(\tilde{\ul{s}})$ are of the form given by 
equation (\ref{eqn_itin}) (or with $-1$ instead of $0$).

\begin{Remark}
One can certainly relate the model $(\cl{X},\M)$ directly to  
$\J(f)$. The reason to incorporate 
a disjoint type map is simply 
to show what to do when the considered strongly subhyperbolic map $f$ can be embedded 
in a family where the topological dynamics of disjoint type maps is well understood.
\end{Remark}
\section*{Appendix B: Subhyperbolic maps without dynamically associated orbifolds}

We want to give an explicit example of a map $\Phi$ that is subhyperbolic, 
has no asymptotic values but such that  
the local degree at points in $\J(\Phi)$ 
is unbounded. 

Let $p$ be a complex polynomial of degree $d\geq 2$ and 
let $z_0$ be a repelling fixed point of $p$ with multiplier $\mu$. 
By Poincar\'{e}'s Theorem \cite{poincare,valiron}, there exists an entire map $\Phi$, 
which is called a \emph{Poincar\'{e} function of $p$ at $z_0$}, 
such that the functional equation 
\begin{align}
\label{eqn_poincare}
\Phi(\mu\cdot z)=p(\Phi(z))
\end{align}
is satisfied for all $z\in\C$. 
Now let 
\begin{align*}
p(z)=z^2-1
\end{align*}
and let $\Phi_0$  denote a Poincar\'{e} function of $p$ at 
the point $z_0=1/2(1+\sqrt{5})$.
The unique finite critical point of $p$ is $0$. 
Since $p(0)=-1$ and $p(-1)=0$, the cycle $\{0,-1\}$ is superattracting
and, in particular, $P(p)\cap \C=\{0,-1\}$. 
Note that $p$ has no other attracting or parabolic cycles in $\C$, since 
every such cycle attracts at least one critical point of $p$ \cite[Theorem 8.6]{milnor}.
Observe also that $p$ has no exceptional values 
(points with a finite backward orbit) in $\C$; it is now not hard to check that 
$C(\Phi_0)=P(p)=\{0,-1\}$ (see e.g. \cite[Theorem 2.10]{drasin_okuyama}).

The multiplier of the repelling fixed point $z_0$ is given by 
$\lambda=p^{'}(z_0) = 1+\sqrt{5}$.
By \cite[p. 160]{valiron}, the order of $\Phi_0$ is given by the formula
\begin{align*}
\rho(\Phi_0)=\frac{\log 2}{\log\vert\lambda\vert}<\frac{\log 2}{\log 3}<1
\end{align*}
and it follows then from the Denjoy-Carleman-Ahlfors Theorem 
\cite[XI, $\S$4, p.313]{nevanlinna} that $\Phi_0$ has 
at most one finite asymptotic value.
Let us assume that $A(\Phi_0)\neq\emptyset$ and let $w$ be the unique 
asymptotic value of $\Phi_0$. 
By \cite[Theorem 1]{drasin_okuyama}, $w$ 
is an attracting periodic point of $p$, hence either $w=0$ or $w=-1$. 
Let $\gamma(t)$ be an asymptotic path for $w$, i.e., 
$\lim_{t\to\infty} \gamma(t)=\infty$ 
and $\lim_{t\to\infty} \Phi_0(\gamma(t)) =w$. Then 
\begin{align*}
 \lim_{t\to\infty} \Phi_0(\lambda\gamma(t))= p(\lim_{t\to\infty}\Phi_0(\gamma(t))) = p(w)\neq w,
\end{align*}
hence $\gamma_{\lambda}(t):=\lambda\cdot\gamma(t)$ is an asymptotic path of $\Phi_0$ leading to 
the asymptotic value $p(w)$. But this contradicts the fact that $A(\Phi_0)=\{ w\}$,
and hence $A(\Phi_0)=\emptyset$.

Since $0$ is a critical value of $\Phi_0$, there exists a point $\tilde{z}$ such that
\begin{align*}
 \Phi_0(\tilde{z})=0\quad\text{and}\quad\Phi_0^{'}(\tilde{z})=0.
\end{align*}
 Let $z_n:=\lambda^n\tilde{z}$.
Using the functional equation (\ref{eqn_poincare}), it is not difficult to see 
that  for every $n\in\N$, 
\begin{align*}
 \frac{d^k}{dz^k}\Phi_0(z)\vert_{z=z_n} =0 \quad\text{for all } 0\leq k\leq n,
\end{align*}
hence for every $n\in\N$, we have $\deg(\Phi_0,z_n)\geq n$. 
This also implies that at least one of the critical values must have a regular preimage;
otherwise, both critical values would have ramification index 
strictly larger than $1/2$ and this is not possible by the fact 
that the sum of the ramification indices of an entire map never 
exceeds one \cite[Chapter X, $\S$3, 235]{nevanlinna}.
We can assume w.l.o.g. that $0$ has a regular preimage, 
say $a$, i.e., $\Phi_0(a)=0$ and $\Phi_0^{'}(a)\neq 0$.
Let $b$ be a preimage of $-1$, chosen sufficiently large 
such that $\vert a-b\vert\cdot\vert \Phi_0^{'}(a)\vert>1$. Now consider the map 
\begin{align*}
 \Phi(z):=(a-b)\cdot\Phi_0(z) +a.
\end{align*}
Note that $A(\Phi)=\emptyset$, since $\Phi$ and $\Phi_0$ differ only by 
postcomposition with a conformal map. 
It follows also immediately that $C(\Phi)=\{ a,b\}$,   
$\Phi(a)=a$ and $\Phi(b)=b$, hence 
$\Phi$ is postsingularly finite and in particular subhyperbolic. 
Moreover, since 
$\vert\Phi^{'}(a)\vert=\vert a-b\vert\cdot\vert\Phi_0^{'}(a)\vert>1$, the 
critical value $a$ is a repelling fixed point of $\Phi$ and hence belongs 
to $\J(\Phi)$, implying that $\Phi$ is not hyperbolic. 
Finally note that the points $z_n$ are mapped to $a$ under $\Phi$ satisfying  
$\deg(\Phi,z_n)\geq n$, so $\Phi$ is not strongly subhyperbolic.

Altogether, this means that $\Phi(z)$ is subhyperbolic, $A(\Phi)=\emptyset$ but 
for every $n\in\N$ there exists a point $z_n\in\J(\Phi)$ such that $\deg(\Phi,z_n)\geq n$,
yielding the desired example.

\begin{Remark}
It is not hard to see that one can use the same idea to construct many more 
Poincar\'{e} maps with the desired properties. 
\end{Remark}

\end{document}